\theoremstyle{definition}
\newtheorem{remark}{Remark}[section]
\def\ps@pprintTitle{%
\let\@oddhead\@empty
\let\@evenhead\@empty
\def\@oddfoot{}%
\let\@evenfoot\@oddfoot}
\newcommand{\tnorm}{\@ifstar\@tnorms\@tnorm}
\newcommand{\@tnorms}[1]{%
\left|\mkern-1.5mu\left|\mkern-1.5mu\left|
#1
\right|\mkern-1.5mu\right|\mkern-1.5mu\right|
}
\newcommand{\@tnorm}[2][]{%
\mathopen{#1|\mkern-1.5mu#1|\mkern-1.5mu#1|}
#2
\mathclose{#1|\mkern-1.5mu#1|\mkern-1.5mu#1|}
}
\renewenvironment{proof}[1][\proofname]{\par
\vspace{-\topsep}% remove the space after the theorem
\pushQED{\qed}%
\normalfont
\topsep0pt\partopsep0pt % no space before
\trivlist
\item[\hskip\labelsep
\itshape
#1\@addpunct{.}]\ignorespaces
}{%
\popQED\endtrivlist\@endpefalse
\addvspace{6pt plus 6pt} % some space after
}
\newtheoremstyle{mystyle}
{3pt} % Space above
{1pt} % Space below
{} % Body font
{} % Indent amount
{\bfseries} % Theorem head font
{.} % Punctuation after theorem head
{.5em} % Space after theorem head
{} % Theorem head spec (can be left empty, meaning `normal')
\theoremstyle{mystyle} 
\newtheorem{theorem}{Theorem}[section]
\newtheorem{lemma}{Lemma}[section]
\DeclareMathOperator{\diag}{diag}
\newtheorem{assumption}[theorem]{Assumption}
\theoremstyle{definition}
\theoremstyle{remark}
\numberwithin{equation}{section}
\begin{document}
\abovedisplayskip=4.2pt plus 4.2pt
\abovedisplayshortskip=0pt plus 4.2pt
\belowdisplayskip=4.2pt plus 4.2pt
\belowdisplayshortskip=2.1pt plus 4.2pt
%------------------------------------------------------------------------------
\begin{frontmatter}
\title{Non-centered parametric variational Bayes' approach for hierarchical inverse problems of partial differential equations
}
%\titlerunning{Short form of title}        % if too long for running head

\author[1]{Jiaming Sui}
%		\cortext[first]{Corresponding author: Jiaming Sui (sjming1997327@stu.xjtu.edu.cn)}

\author[1]{Junxiong Jia  \corref{first}}
\cortext[first]{Corresponding author: Junxiong Jia (jjx323@xjtu.edu.cn)}

\address[1]{School of Mathematics and Statistics, Xi'an Jiaotong University,
	Xi'an, Shaanxi 710049, China}
%		\address[2]{School of Mathematics and Statistics, Xi'an Jiaotong University,
%			Xi'an, Shaanxi 710049, China}

% The correct dates will be entered by the editor
\begin{abstract}
This paper proposes a non-centered parameterization based infinite-dimensional mean-field variational inference (NCP-iMFVI) approach for solving the hierarchical Bayesian inverse problems.	This method can generate available estimates from the approximated posterior distribution efficiently. To avoid the mutually singular obstacle that occurred in the infinite-dimensional hierarchical approach, we propose a rigorous theory of the non-centered variational Bayesian approach.	Since the non-centered parameterization weakens the connection between the parameter and the hyper-parameter, we can introduce the hyper-parameter to all terms of the eigendecomposition of the prior covariance operator. We also show the relationships between the NCP-iMFVI and infinite-dimensional hierarchical approaches with centered parameterization. The proposed algorithm is applied to three inverse problems governed by the simple smooth equation, the Helmholtz equation, and the steady-state Darcy flow equation. Numerical results confirm our theoretical findings, illustrate the efficiency of solving the iMFVI problem formulated by large-scale linear and nonlinear statistical inverse problems, and verify the mesh-independent property.
\end{abstract}

\begin{keyword}
inverse problems, infinite-dimensional variational inference, Bayesian analysis for functions, partial differential equations, inverse source problem
\end{keyword}
%------------------------------------------------------------------------------
\end{frontmatter}
\section{Introduction}

Due to the multidisciplinary cross-application in seismic exploration \cite{weglein2003inverse}, medical imaging \cite{zhou2020bayesian} and so on, the inverse problems of partial differential equations (PDEs) have undergone an enormous development over the past few decades \cite{arridge2019solving}. 
For solving the inverse problems of PDEs, uncertainties are ubiquitous, e.g., measurement uncertainty and epistemic uncertainty. 
The Bayesian inverse approach provides a flexible framework that solves inverse problems by transforming them into statistical inference problems, thereby making it feasible to analyze the uncertainty of the solutions to the inverse problems. 

Inverse problems of PDEs have been defined on some infinite-dimensional spaces \cite{Kirsch2011Book} generally, which are not compatible with the well-studied finite-dimensional Bayesian inference approach \cite{bishop2006pattern,kaipio2006statistical}.
To overcome this obstacle, we usually employ these two main approaches:
\begin{itemize}
 \item Discretize-then-Bayesianize: 
 The PDEs are initially discretized to approximate the original problem in some finite-dimensional space, and the reduced
 approximated problem is then solved by using the Bayes' method \cite{kaipio2006statistical}. 
 \item Bayesianize-then-discretize:
 The Bayes' formula and algorithms are initially constructed on infinite-dimensional space, and after the infinite-dimensional
 algorithm is built, some finite-dimensional approximation is carried out \cite{dashti2013bayesian}.
\end{itemize}

The above two approaches both have advantages and disadvantages.
The first approach enables us to employ all of the Bayesian inference methods developed in the statistical literature \cite{kaipio2006statistical} to solve the inverse problems.
However, given that the original problems are defined on the infinite-dimensional spaces, some problems have arisen, e.g., descending convergence rate and mesh dependence, which generates an inevitable barrier for employing this approach.\ Concerned with the \emph{Bayesianize-then-discretize} approach, it has the following advantages: 

\begin{itemize}
 	\item A better understanding of the function space structures will be of importance for designing numerical schemes of PDEs, especially when the gradient information is employed \cite{hinze2008optimization}. 
	\item Formulating infinite-dimensional theory rigorously can avoid inappropriate intuitive ideas from the finite-dimensional inverse approach, e.g., total variation prior \cite{Lassas2004IP}. 
	\item Pushing the discretization to the last step usually generate \emph{mesh independence} algorithms, which means that  
	the sampling efficiency will not highly depend on the dimension of the discretization \cite{petra2014computational}. 
\end{itemize}
Based on these advantages, \emph{Bayesianize-then-discretize} approach has attracted numerous researchers' attention in recent years \cite{bui2013computational,Cotter2009IP,Stuart2010ActaNumerica}. 

One critical issue for applying the Bayesian approach is efficiently extracting posterior information. 
Based on the \emph{Bayesianize-then-discretize} perspective, an infinite-dimensional Markov chain Monte Carlo (MCMC) type algorithm named preconditioned Crank-Nicolson (pCN) has been proposed and analyzed in detail \cite{cotter2013,Pillai2014SPDE}, which has consistent sampling efficiency under different discretizations. 
Besides the pCN algorithm, other types of infinite-dimensional sampling algorithms have been proposed, e.g., 
infinite-dimensional sequential Monte Carlo algorithm \cite{Beskos2015SC} and 
infinite-dimensional importance sampling algorithm \cite{Agapiou2017SS}.
In order to enhance the sampling efficiency, infinite-dimensional MCMC type sampling algorithms 
with gradient and geometric informative proposals have been constructed, e.g., infinite-dimensional Metropolis-adjusted Langevin algorithm \cite{Thanh2016IPI}, and geometric pCN algorithm \cite{Beskos2017JCP}. 
Although these algorithms have mesh independence property, they can hardly be employed to solve large-scale inverse problems of PDEs such as full waveform inversion \cite{Fichtner2011Book}. 

Variational inference (VI), an efficient approximated sampling method, has been widely investigated in machine learning for quantifying the uncertainties of learning models \cite{blei2017variational,zhang2018advances}.
Various approximate probability measures have been frequently used for training deep neural networks in finite-dimensional spaces.
Thus VI methods are usually constructed for finite-dimensional problems.
Some studies on VI methods for inverse problems of PDEs are based on the \emph{discretize-then-Bayesianize} perspective on finite-dimensional spaces.
For example, a mean-field assumption based VI approach was employed to solve finite-dimensional inverse problems with hyper-parameters in prior and noise distributions \cite{Guha2015JCP, Jin2012JCP, jin2010hierarchical}.
Projected Stein variational gradient descent methods were constructed to solve inverse problems with low-intrinsic dimensions \cite{Chen2021SISC, Chen2019NIPS}. 

However, under the infinite-dimensional settings, the VI methods are much less studied for solving inverse problems of PDEs from the \emph{Bayesianize-then-discretize} perspective.
Specifically, when the approximated measures are restricted to be Gaussian, a novel Robbins-Monro algorithm was developed in \cite{pinski2015algorithms,Pinski2015SIAMMA} from the calculus-of-variations viewpoint.
In order to employ the non-Gaussian approximated measures, 
a general \textbf{i}nfinite-dimensional \textbf{m}ean-\textbf{f}ield \textbf{VI} (iMFVI) theoretical framework was established in \cite{jia2021variational}.
Based on this general theory, a generative deep neural network model named VINet was constructed in \cite{Jia2022VINet} by analyzing the approximated measures. 
The infinite-dimensional Stein variational gradient descent approach was proposed to solve the nonlinear inverse problems in \cite{jia2021stein}. 

Besides the problem of efficient sampling, another critical issue for applying the Bayesian approach is that we sometimes can hardly provide an appropriate prior measure. 
Especially we can hardly specify the scales of the variance, which corresponds to the regularization parameter in the classical regularization methods \cite{Agapiou2013SPTA,Dashti2013IP}.
The hierarchical Bayesian inference approach has been widely adopted to specify the hyper-parameters, e.g., the scales of the covariance operator.
In the finite-dimensional space, a good introduction is provided in Chapter 3 of \cite{kaipio2006statistical}.
A hierarchical Bayesian model, providing an efficient iterative algorithm for calculating the maximum a posterior estimates, has been constructed in \cite{calvetti2009conditionally, Calvetti2008hypermodels}, with the probability density of the hyper-parameter being Gamma distribution.
Furthermore, in order to extract the posterior information based on the hierarchical model, the Gibbs sampling method has been studied in \cite{papaspiliopoulos2008stability, papaspiliopoulos2003non, papaspiliopoulos2007general}, which provides an efficient sampling method, and introduces the non-centered parameterization.

In this work, we focus on a similar hierarchical Bayesian model defined on infinite-dimensional space that has been investigated in \cite{agapiou2014analysis,chen2018dimension,dunlop2017hierarchical} under the framework of infinite-dimensional MCMC sampling methods. 
As indicated in \cite{agapiou2014analysis,chen2018dimension}, how to introduce hyper-parameters in the prior measure in the infinite-dimensional case is very different from the finite-dimensional case since the infinite-dimensional Gaussian measures tend to be singular with each other (see also Chapter 2 of \cite{Prato2014book} for details). 
In order to overcome the singular issue of infinite-dimensional Gaussian measures, the prior covariance operator $\mathcal{C}_0$ has been represented by its eigendecomposition in \cite{jia2021variational}, and the hyper-parameter is only introduced to a finite number of eigenbasis.
As stated in Subsection 3.1 of \cite{jia2021variational}, this strategy will lead to an inappropriate Bayesian model that can not adequately incorporate information encoded in data. 
Recently, a detailed analysis has been given in \cite{Dunlop2020SMAIJCM}, which indicates that centered parametrization is suitable for evaluating maximum a posterior estimate, but non-centered parameterization is more appropriate for using the Gibbs sampling method. 
To the best of our knowledge, there are few studies of the infinite-dimensional hierarchical Bayesian models under the infinite-dimensional VI framework that can overcome the singular issue without the truncation of eigenexpansions. 
Hence, we intend to formulate the iMFVI method with a non-centered parametrization (NCP) approach, which yields a new VI method that allows us to introduce the hyper-parameters for the whole eigenexpansion of the prior covariance operator. 
The new VI method has been constructed rigorously (see Subsections \ref{subsec2.2}, and \ref{subsec2.3}) and the relations with the centered parametrization based method are discussed in detail (see Subsections \ref{subsec2.4}). 
Finally, we provide numerical discretization strategies based on the low-rank structure of the posterior measure (see Subsections \ref{subsec2.5} and \ref{subsec2.6}). 
In summary, this work mainly contains four contributions:

\begin{itemize}
	\item For the hierarchical Bayesian model, a \textbf{n}on-\textbf{c}entered \textbf{p}arameterization based iMFVI (NCP-iMFVI) approach is proposed.
	Compared with the iMFVI approach proposed in Section 3.1 of \cite{jia2021variational}, we can introduce the hyper-parameter for the whole covariance operator of the prior measure rather than for the truncated finite number of components based on the eigenbasis.  
	\item Different from the iMFVI method proposed in \cite{jia2021variational}, we transform the unknown parameter into a new one by taking the non-centered parameterization and formulating a new VI problem instead of the original VI problem.
	We carry out detailed discussions about the relationships between these two problems, which yield compelling reasons for employing the NCP formulation to solve the hierarchical inference problem in the infinite-dimensional space.
	\item Through a detailed structural analysis of the posterior measure of the hyper-parameter, we transform the complicated trace calculation into solving PDEs. 
	Based on scalable PDE solvers and the ideas of low-rank approximation \cite{bui2013computational,Ghattas2021ActaNumerica}, the proposed method will also be scalable that can be employed for large-scale problems.
	\item We verify the accuracy of the NCP-iMFVI approach on a one-dimensional elliptic inverse problem. 
	In addition, we demonstrate the scalability of the NCP-iMFVI approach for the number of parameters by the inverse source problem of the Helmholtz equation and the inverse permeability of the steady-state Darcy flow equation.
\end{itemize}

The outline of this paper is as follows. 
In Section $\ref{sec2}$, we construct the non-centered VI method based on the framework under the linear case and verify the essential conditions in VI theory.
In Subsection $\ref{subsec2.1}$, we introduce the iMFVI theory proposed in \cite{jia2021variational}, and illustrate the critical problems of the infinite-dimensional hierarchical approach. 
In Subsection $\ref{subsec2.2}$, we propose the non-centered Bayesian formulation. 
In Subsection $\ref{subsec2.3}$, we construct the general theory of NCP-iMFVI approach. 
In Subsection $\ref{subsec2.4}$, we discuss the relationships between the NCP formulation and the approach introduced in Subsection $\ref{subsec2.1}$, and provide convincing results to explain the reason for employing the NCP formulation.
In Subsections $\ref{subsec2.5}$ and $\ref{subsec2.6}$, the computational details are provided. 
In Section $\ref{sec3}$, we employ the developed non-centered VI method to three inverse problems, governed by the simple smooth equation, Helmholtz equation and the steady state Darcy flow equation, with noise and hyper-parameter both Gaussian.
Furthermore, in each numerical simulation, we will illustrate the mesh independence as expected for ``Bayesianize-then-discretize'' approach. 
In Section $\ref{sec4}$, we summarize our achievements and claim some deficiencies, and further investigate directions.

\section{Non-centered infinite-dimensional VI method}\label{sec2}

This section provides a non-centered infinite-dimensional VI method to solve the hierarchical Bayesian inverse problem.
Different from the iMFVI theory proposed in \cite{jia2021variational}, such a method avoids the obstacle that occurred in the iMFVI theory and has an explicit relationship with the centered VI problem.
Furthermore, under the settings in our article, we can clarify the relationship between these two VI problems.
Based on the analysis, an iterative algorithm is then proposed.

\subsection{Critical problems of infinite-dimensional hierarchical approach}\label{subsec2.1}
In this subsection, we first introduce the infinite-dimensional hierarchical Bayesian problem. 
Let $\mathcal{H}_u$ be a separable Hilbert space and $N_d$ be a positive integer. 
Denote that $\mathcal{N}(u, \mathcal{C})$ is a Gaussian measure with mean $u$ and covariance operator $\mathcal{C}$.
The linear inverse problem can be described as follow:
\begin{align}\label{eq:ca}
 \bm{d} = Hu + \bm{\epsilon},
\end{align}
where $\bm{d}\in \mathbb{R}^{N_d}$ is the measurement data, $u\in \mathcal{H}_u$ is the interested parameter, $H$ is a bounded linear operator from $\mathcal{H}_u$ to $\mathbb{R}^{N_d}$, and $\bm{\epsilon}$ is a Gaussian random vector with zero mean and variance $\bm{\Gamma}_{\text{noise}} := \tau^{-1}\textbf{I}$ ($\tau$ is a fixed positive number), which means
\begin{align}\label{eq:noise}
 \bm{\epsilon} \sim \mathcal{N}(0, \bm{\Gamma}_{\text{noise}}).
\end{align}

We adopt a hierarchical Bayesian approach with the unknown parameter $u \sim \mu^{u, \lambda}_0 = \mathcal{N}(0, \lambda^{2}\mathcal{C}_0)$, where $\mathcal{C}_0:\mathcal{H}_u\rightarrow \mathcal{H}_u$ is a positive defined, symmetric and trace-class operator, and $\lambda^2$ is the amplitude of prior variance. 
Let $\lbrace \alpha_k, e_k \rbrace_{k=1}^{\infty}$ be the eigensystem of the operator $\mathcal{C}_0$ such that 
$\mathcal{C}_0 e_k = \alpha_k e_k$ for $k=1,2,\cdots$.
Without loss of generality, we assume that the eigenvectors $\lbrace e_k \rbrace ^{\infty}_{k=1}$ are orthonormal and the eigenvalues $\lbrace \alpha_k \rbrace ^{\infty}_{k=1}$ are in descending order.
Assume that $\lambda$ is a Gaussian random variable with mean $\bar{\lambda}$ and variance $\sigma > 0$, i.e., $\lambda \sim \mu^{\lambda}_0$.
According to ($\ref{eq:ca}$), let us denote $\Phi(u) = \frac{1}{2}\lVert Hu - \bm{d} \rVert^2_{\bm{\Gamma}_{\text{noise}}}$ to be the potential function, where $\lVert \cdot \rVert^2_{\bm{\Gamma}_{\text{noise}}} = \lVert \bm{\Gamma}^{-1/2}_{\text{noise}}\cdot \rVert^2_{l^2}$, with $\lVert \cdot \rVert_{l^2}$ denoting the usual $l^2$-norm.
For notational convenience, we denote $\lVert \cdot \rVert_{l^2}$ by $\lVert \cdot \rVert$.
Then based on the Bayes' formula, the posterior measure $\widetilde{\mu}$ satisfies
\begin{align}\label{eq:bayeca}
 \widetilde{\mu}(du, d\lambda) \varpropto \exp(-\Phi(u))\widetilde{\mu}_0(du, d\lambda),
\end{align}
where $\widetilde{\mu}_0(du, d\lambda) = \mu_0^{u,\lambda}(du)\mu_0^{\lambda}(d\lambda)$.

For employing the iMFVI theory proposed in \cite{jia2021variational}, we need that the conditional prior measures with different
hyper-parameters, i.e., $\mu^{u, \lambda}_0$, are equivalent with each other. 
However, according to Remark 2.10 in \cite{da2006introduction}, we know that the Gaussian measures $\mu^{u, \lambda}_{0}$ and 
$\mu^{u, \lambda^{\prime}}_{0}$ are singular with each other if the hyper-parameters $\lambda \neq \lambda'$.
In order to avoid this singularity problem, the prior covariance operators with hyper-parameters introduced in \cite{jia2021variational} are as follows:
\begin{align}\label{CKlam}
\mathcal{C}^{K}_0(\lambda) := \sum^{K}_{j=1} \lambda^2 \alpha_j e_j \otimes e_j + \sum^{\infty}_{j=K+1} \alpha_j e_j \otimes e_j,
\end{align}
where $K$ is a pre-specified positive integer. 
This formulation has also been employed in \cite{feng2018adaptive}, which will be appropriate if the information in the data
only effective on the first $K$ terms of the prior covariance eigensystem.  
However, if the data is particularly informative and far from the prior, the prior covariance operator specified in (\ref{CKlam}) will lead to a Bayesian inference model that lacks the ability to incorporate the data information.
In the following, we will construct a new approach to overcome the difficult singularity problem and, at the same time, add hyper-parameter to all terms of the eigensystem of the prior covariance operator. 

\subsection{Non-centered formulation}\label{subsec2.2}
From Subsection $\ref{subsec2.1}$, we know that the priors $\mu^{u, \lambda}_0$ are mutually singular 
for different values of $\lambda$, 
which conflicts with the basic assumptions of the iMFVI theory \cite{jia2021variational}.
As indicated in \cite{agapiou2014analysis}, 
similar difficulties were also encountered for employing the two-component Metropolis-within-Gibbs (MwG) algorithms.
In the literature of MwG algorithms \cite{chen2018dimension,papaspiliopoulos2007general}, sampling $u$ and $\lambda$ iteratively is usually called the \emph{centered parameterization} (CP), which will suffer from increasing slow convergence as the discretization level increases. 
Hence, the \emph{non-centered parameterization} (NCP) methods are proposed in the investigations of MwG type algorithms. 
In the following, we introduce the NCP method into the formulation of the iMFVI theory to overcome the mutually singular problem of probability measures. 

Specifically, we reparameterize the prior $\mu^{\prime}_0(du, d\lambda) = \mu_0^{u,\lambda}(du)\mu_0^{\lambda}(d\lambda)$ by writing $u = \lambda v$ with $ v \sim \mu^{v}_0 = \mathcal{N}(0, \mathcal{C}_0)$.
This parameterization will not change the original assumptions on $u$ since the parameter $u=\lambda v$ still distributed according to $\mathcal{N}(0, \lambda^2 \mathcal{C}_0)$.
By working in variables $(v, \lambda)$ rather than $(u, \lambda)$, the measures $\mu^{v}_0$ and $\mu^{\lambda}_0$ are priori independent, which avoids the lack of robustness arising from mutual singularity and the non-informative issue arising from the truncated approach (\ref{CKlam}).
As for this non-centered parameterization, we employ the prior probability measure as follows:
\begin{align}\label{eq:prior}
 \mu_0 = \mu^v_0 \times \mu^{\lambda}_0,
\end{align}
then the forward problem turns to
\begin{align}\label{eq:nc}
 \bm{d} = H(\lambda v) + \bm{\epsilon},
\end{align}
where $\bm{\epsilon} \sim \mathcal{N}(0, \bm{\Gamma}_{\text{noise}})$ is the Gaussian noise.

Concerned with the NCP formulation, i.e., formulas (\ref{eq:prior}) and (\ref{eq:nc}), 
we need to answer the following two fundamental questions: 
\begin{itemize}
	\item Whether the Bayes' formula holds rigorously under the current NCP setting;
	\item What is the relationship between CP and NCP formulations for constructing iMFVI methods. 
\end{itemize}
The first question is addressed in the following Theorem \ref{BayesTheoremNCP}. To answer the second question, 
we need to give a brief illustration of the general VI theory. 
Hence, the statements are postponed to Subsection \ref{subsec2.4}.

\begin{theorem}\label{BayesTheoremNCP}
Let $\mathcal{H}_u$ be a separable Hilbert space and $N_d$ be a positive integer. Let $\mu_0 = \mu^v_0 \times \mu^{\lambda}_0 $ be a Gaussian measure defined on $\mathcal{H}_u \times \mathbb{R}$, and set the Gaussian noise $\bm{\epsilon} \sim \mathcal{N}(0, \bm{\Gamma}_{\text{noise}})$.
Let $\Phi : \mathcal{H}_u \times \mathbb{R} \rightarrow \mathbb{R}$ be defined as
\begin{align}\label{eq:likelihood}
 \Phi(v, \lambda) = \frac{1}{2}\lVert \bm{d} - \lambda Hv\rVert ^{2}_{\bm{\Gamma}_{\text{noise}}},
\end{align}
where $\bm{\Gamma}_{\text{noise}} = \tau^{-1}\textbf{I}$ and $\tau$ is a positive constant.
Then $\mu \ll \mu_0$ is a well-defined probability measure on $\mathcal{H}_u \times \mathbb{R}$, with Radon-Nikodym derivative
\begin{align}\label{eq:post}
	\qquad\,\,\frac{d\mu}{d\mu_0}(v, \lambda) = \frac{1}{Z_{\mu}}\exp (-\Phi(v, \lambda) ),
\end{align}
where $Z_{\mu}$ is a positive and finite constant given by
\begin{align*}
	Z_{\mu} = \int_{\mathcal{H}_u \times \mathbb{R}} \exp (-\Phi(v, \lambda))\mu^v_0(dv)\mu^{\lambda}_0(d\lambda).\\
\end{align*}
\end{theorem}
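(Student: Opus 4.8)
The plan is to follow the by-now standard route for establishing a Bayes' formula on function spaces (as in \cite{Stuart2010ActaNumerica,dashti2013bayesian,jia2021variational}): first verify that the potential $\Phi$ is regular enough, then show that the normalizing constant $Z_{\mu}$ is strictly positive and finite, and finally check that \eqref{eq:post} does define a probability measure that is mutually absolutely continuous with $\mu_0$.

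First I would record the elementary properties of $\Phi$ defined in \eqref{eq:likelihood}. Since $H$ is a bounded linear operator from $\mathcal{H}_u$ into $\mathbb{R}^{N_d}$ and $\bm{\Gamma}_{\text{noise}} = \tau^{-1}\textbf{I}$ with $\tau > 0$, the map $(v,\lambda)\mapsto \lambda Hv$ is continuous from $\mathcal{H}_u\times\mathbb{R}$ into $\mathbb{R}^{N_d}$; composing it with the continuous map $\bm{w}\mapsto \tfrac{\tau}{2}\lVert \bm{d}-\bm{w}\rVert^2$ shows that $\Phi$ is continuous on $\mathcal{H}_u\times\mathbb{R}$, hence Borel measurable, and, being a scaled squared norm of a finite vector, it satisfies $0\le \Phi(v,\lambda) < \infty$ for every $(v,\lambda)$. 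Note that no subtlety about the domain of $H$ arises, since $Hv$ is defined for every $v\in\mathcal{H}_u$.

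Next I would bound $Z_{\mu}$. From $0\le \Phi < \infty$ one gets $0 < \exp(-\Phi(v,\lambda))\le 1$ pointwise; integrating against the probability measure $\mu_0 = \mu_0^v\times\mu_0^{\lambda}$ then yields $0 < Z_{\mu} \le 1$, the strict positivity following because $\exp(-\Phi)$ is everywhere strictly positive and $\mu_0$ is a probability measure. Consequently $Z_{\mu}^{-1}\exp(-\Phi)$ is a non-negative, Borel-measurable, $\mu_0$-integrable function with $\mu_0$-integral equal to $1$, so $d\mu := Z_{\mu}^{-1}\exp(-\Phi)\,d\mu_0$ defines a probability measure on $\mathcal{H}_u\times\mathbb{R}$; absolute continuity $\mu\ll\mu_0$ holds by construction with the stated Radon--Nikodym derivative, and in fact $\mu_0\ll\mu$ also holds since the density is strictly positive.

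Finally, to justify calling $\mu$ the Bayesian posterior for the model \eqref{eq:prior}--\eqref{eq:nc}, I would phrase the conclusion as a conditioning statement: under $\mu_0$ the data has conditional law $\bm{d}\mid(v,\lambda)\sim\mathcal{N}(\lambda Hv,\bm{\Gamma}_{\text{noise}})$, whose Lebesgue density on $\mathbb{R}^{N_d}$ is proportional to $\exp(-\Phi(v,\lambda))$, and then invoke the infinite-dimensional Bayes' theorem in the form used in \cite{dashti2013bayesian,jia2021variational} to obtain \eqref{eq:post}. The hypotheses of that theorem---joint measurability of $(v,\lambda,\bm{d})\mapsto\exp(-\Phi(v,\lambda))$ and positivity of the marginal density of $\bm{d}$---are immediate here because $\Phi$ is jointly continuous and everywhere finite, so I do not anticipate a genuine obstacle; the argument is essentially a verification that these conditions are met. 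The one step worth an explicit line is confirming $Z_{\mu} > 0$ without imposing any moment assumption on $\mu_0^{\lambda}$: this works precisely because $\Phi$ is everywhere non-negative and never equals $+\infty$ (the data space $\mathbb{R}^{N_d}$ being finite-dimensional), so no Fernique-type estimate on $\mu_0$ is needed, which is the structural advantage of the non-centered reparameterization that makes the prior the fixed product measure $\mu_0^v\times\mu_0^{\lambda}$.
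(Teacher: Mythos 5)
Your proposal is correct. The core of the claim is exactly what you isolate: since $\mathbb{R}^{N_d}$ is finite-dimensional and $H$ is bounded, $\Phi$ is continuous, everywhere finite and non-negative, so $0<\exp(-\Phi)\le 1$ pointwise and hence $0<Z_{\mu}\le 1$ without any moment or Fernique-type estimate on $\mu_0^v\times\mu_0^{\lambda}$; the density $Z_{\mu}^{-1}\exp(-\Phi)$ then defines a probability measure equivalent to $\mu_0$. The paper reaches the same conclusion by a slightly less direct route: it verifies the hypotheses of Assumption~1 and Theorems~15--16 of \cite{dashti2013bayesian} (recalled in the Appendix), the main computation being the Lipschitz estimate $\lvert\Phi(v,\lambda;\bm{d}_1)-\Phi(v,\lambda;\bm{d}_2)\rvert\le \tau(\lVert\bm{d}_1\rVert+\lVert\bm{d}_2\rVert+\lvert\lambda\rvert M\lVert v\rVert_{\mathcal{H}_u})\lVert\bm{d}_1-\bm{d}_2\rVert$, with the lower bound handled by $\Phi\ge 0$ (so $M_1\equiv 0$ and the integrability condition of Theorem~15 is trivial). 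What the paper's route buys beyond the stated theorem is the Hellinger stability of the posterior with respect to the data (Theorem~16), which your direct argument does not address but which the theorem statement does not claim either. Your closing remark about the conditional law of $\bm{d}$ is the right way to justify the word ``posterior,'' and matches the framework the paper cites; both arguments are complete for the statement as given.
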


\begin{proof}
	Since $H$ is a bounded linear operator, it is easy to obtain that $\Phi(v, \lambda)$ is continuous with respect to the variables $v, \lambda$.
	Noticing that the operator $H$ is a bounded linear operator, we have $\lVert Hv \rVert \leqslant M\lVert v \rVert_{\mathcal{H}_u}$, where $M$ is a fixed constant.
	Through a simple calculation, we have
	\begin{align*}
		\bigg\lvert \Phi(v, \lambda;\bm{d}_1) - \Phi(v, \lambda;\bm{d}_2) \bigg\rvert &= \frac{1}{2}\bigg\lvert \lVert \bm{d}_1 - \lambda Hv\rVert 	^2_{\bm{\Gamma}_{\text{noise}}} -  \lVert \bm{d}_2 - \lambda Hv\rVert ^2_{\bm{\Gamma}_{\text{noise}}} \bigg\rvert \\
		&= \frac{\tau}{2} \bigg \lvert \lVert \bm{d}_1 \rVert^2 - \lVert \bm{d}_2 \rVert^2 -2 \langle \bm{d}_1 - \bm{d}_2, \lambda Hv \rangle_{\mathbb{R}^{N_d}} \bigg \rvert \\
		&\leqslant \tau (\lVert \bm{d}_1 \rVert + \lVert \bm{d}_2 \rVert + \lVert \lambda Hv \rVert)\lVert \bm{d}_1 - \bm{d}_2 \rVert \\
		&\leqslant \tau (\lVert \bm{d}_1 \rVert + \lVert \bm{d}_2 \rVert + \lvert \lambda\rvert M\lVert v \rVert_{\mathcal{H}_u})\lVert \bm{d}_1 - \bm{d}_2 \rVert.
	\end{align*}
	Hence, we easily know that the Assumption 1 and conditions of Theorems 15-16 in \cite{dashti2013bayesian} are satisfied, which are provided in the Appendix.
	Then the Bayes' formula ($\ref{eq:post}$) is well-defined.
	Moreover, $Z_{\mu}$ is positive and finite.
\end{proof}

\subsection{General theory of iMFVI}\label{subsec2.3}
Based on the preparations provided in Subsections \ref{subsec2.1} and \ref{subsec2.2}, 
we will apply a general mean-field assumption based iMFVI theory developed in \cite{jia2021variational} to our NCP setting.
Specifically, we will prove a theorem, which provides the foundation for constructing practical iterative algorithms. 

Let $\mathcal{H}$ be a separable Hilbert space, $\mathcal{M}(\mathcal{H})$ be the set of Borel probability measures on $\mathcal{H}$, and $\mathcal{A} \subset \mathcal{M}(\mathcal{H})$ be a set of ``simpler'' measures that can be efficiently calculated. 
Following Subsection $\ref{subsec2.2}$, we define prior measure $\mu_0$ on $\mathcal{H}$ that can be decomposed as $\mu_0 = \mu^v_0 \times \mu^{\lambda}_0$.
Let the measure $\mu$ be the posterior measure with respect to $\mu_0$ defined on $\mathcal{H}$, 
then we have the Bayes' formula ($\ref{eq:post}$). 

For any $\nu \in \mathcal{M}(\mathcal{H})$ that is absolutely continuous with respect to $\mu$, 
the Kullback-Leibler (KL) divergence is defined as 
\begin{align*}
 D_{\text{KL}}(\nu || \mu) &= \int_{\mathcal{H}} \log \bigg(\frac{d\nu}{d\mu}(u) \bigg)\frac{d\nu}{d\mu}(u)\mu(du)
 =\mathbb{E}^{\mu} \bigg[\log \bigg(\frac{d\nu}{d\mu}(u) \bigg) \frac{d\nu}{d\mu}(u) \bigg].
\end{align*}
Here, the notation $\mathbb{E^{\mu}}$ means taking expectation with respect to the probability measure $\mu$.
If $\nu$ is not absolutely continuous with respect to $\mu$, the KL divergence is defined as $+\infty$. 
The iMFVI methods aim to find the closest probability measure $\nu$ to the posterior measure $\mu$ with respect to the KL divergence from the set $\mathcal{A}$, i.e., solving the following minimization problem:  
\begin{align}\label{expre:problem}
 \mathop{\arg\min}\limits_{{\nu \in \mathcal{A}}} D_{{\text{KL}}} (\nu \Vert \mu).
\end{align}
The mean-field assumption means that all components of the parameters are assumed to be independent. 
For the current setting, we assume that the random variables $v$ and $\lambda$ are independent with each other.  
Hence, the space $\mathcal{H}$ and the set $\mathcal{A}$ can be decomposed as follows
\begin{align*}
 \mathcal{H}=\mathcal{H}_u \times \mathcal{H}_{\lambda}, \qquad 
 \mathcal{A}=\mathcal{A}_v \times \mathcal{A}_{\lambda},
\end{align*}
where $\mathcal{H}_u$ is a separable Hilbert space, $\mathcal{H}_{\lambda}$ obviously is $\mathbb{R}$, $\mathcal{A}_v \subset \mathcal{M}(\mathcal{H}_u)$, and $\mathcal{A}_{\lambda} \subset \mathcal{M}(\mathcal{H}_{\lambda}) = \mathcal{M}(\mathbb{R})$. 
In addition, we assume that the approximated probability measure $\nu$ is equivalent to $\mu_0$ defined by
\begin{align}\label{DefApproxMeasure}
 \frac{d\nu}{d\mu_0}(v, \lambda) = \frac{1}{Z_{\nu}}\exp (-\Phi_v(v)-\Phi_{\lambda}(\lambda)),
\end{align}
where $\Phi_{v}(\cdot)$ and $\Phi_{\lambda}(\cdot)$ are two potential functions, 
and $Z_\nu$ is the normalization constant. 
Obviously, the approximated measure $\nu$ can be decomposed into two components that are absolutely continuous with respect to the corresponding components of the prior measure, i.e., 
\begin{align}
	\frac{d\nu^v}{d\mu^v_0}(v) &= \frac{1}{Z^v_{\nu}}\exp (-\Phi_v(v))\label{DefApproxMeasure1}, \\
	\frac{d\nu^{\lambda}}{d\mu^{\lambda}_0}({\lambda}) &= \frac{1}{Z^{\lambda}_{\nu}}\exp (-\Phi_{\lambda}(\lambda))\label{DefApproxMeasure2},
\end{align}
with ${Z^v_{\nu}} = \mathbb{E}^{\mu^v_0} \left[\exp (-\Phi_v(v)) \right]$ and 
${Z^{\lambda}_{\nu}} = \mathbb{E}^{\mu^{\lambda}_0} \left[\exp (-\Phi_{\lambda}({\lambda}))\right]$. 
With these assumptions, the problem ($\ref{expre:problem}$) can be written specifically as 
\begin{align}\label{optimProb}
	\mathop{\arg\min}\limits_{\substack{\nu_v \in \mathcal{A}_{v} \\ 
			\nu_{\lambda} \in \mathcal{A}_{\lambda}}}D_{{\text{KL}}}
	\bigg (\nu^v \times \nu^{\lambda} \bigg \Vert \mu \bigg ).
\end{align}

For the finite-dimensional theory, the sets $\mathcal{A}_{v}$ and $\mathcal{A}_{\lambda}$ are specified as 
the set of any probability density functions. For the infinite-dimensional theory developed in \cite{jia2021variational}, 
we need more illustrations of the sets $\mathcal{A}_v$ and $\mathcal{A}_{\lambda}$, which ensure the measures 
obtained by iterations are still in some admissible sets. 

\begin{assumption}\label{assump1}

	Let $\nu^v$ and $\nu^{\lambda}$ be the approximate measures defined in $(\ref{DefApproxMeasure1})$ and $(\ref{DefApproxMeasure2})$ respectively.
	Let us define $T^v_N = \lbrace v|1/N \leqslant \lVert v\rVert_{\mathcal{H}_u} \leqslant N\rbrace$ that satisfies $\sup_N \mu ^v_0(T^v_N)=1$,
	and define $T^{\lambda}_N = \lbrace \lambda|-N \leqslant \lambda \leqslant N \rbrace$ that satisfies $\sup_N\mu ^{\lambda}_0(T^{\lambda}_N)=1$.
	Furthermore we assume that
	\begin{align}\label{expre:condi}
		\begin{split}
			T_1 &:= \sup \limits_{v \in T^v_N} \int_{\mathbb{R}}\Phi(v, \lambda)\cdot 1_{A}(v, \lambda)\nu^{\lambda}(d\lambda) < \infty\\
			T_2 &:= \sup \limits_{\lambda \in T^{\lambda}_N} \int_{\mathcal{H}_u}\Phi(v, \lambda)\cdot 1_{A}(v, \lambda)\nu^{v}(dv) < \infty\\
			T_3 &:= \int_{\mathcal{H}_u} \exp \bigg (-\int_{\mathbb{R}}\Phi(v, \lambda)\cdot \nu^{\lambda}(d\lambda) \bigg )\max(1,\lVert v \rVert^{2}_{\mathcal{H}_u})\mu^{v}_0(dv) < \infty\\ 
			T_4 &:= \int_{\mathbb{R}} \exp \bigg (-\int_{\mathcal{H}_u}\Phi(v, \lambda)\cdot \nu^{v}(dv) \bigg ) \max(1, \lambda^2)\mu^{\lambda}_0(d\lambda) < \infty,
		\end{split}
	\end{align}
	where the set $A := \lbrace v, \lambda \vert \Phi(v, \lambda) \geqslant 0 \rbrace$.
\end{assumption}

Let
\begin{align}\label{expre:R}
	\begin{split}
  R^1_v &= \bigg \lbrace \Phi_v \bigg | \sup \limits_{v \in T^v_N} \Phi_v(v) < \infty, \quad \forall N > 0 \bigg \rbrace, \\
  R^2_v &= \bigg \lbrace \Phi_v \bigg | \int_{\mathcal{H}_u}\exp(-\Phi_v(v))\max(1, \lVert v\rVert^2_{\mathcal{H}_u})\mu^v_0(dv) < \infty \bigg \rbrace, \\
  R^1_{\lambda} &= \bigg \lbrace \Phi_{\lambda} \bigg | \sup \limits_{{\lambda} \in T^{\lambda}_N} \Phi_{\lambda}({\lambda}) < \infty, \quad \forall N > 0 \bigg \rbrace, \\
  R^2_{\lambda} &= \bigg \lbrace \Phi_{\lambda} \bigg | \int_{\mathbb{R}}\exp(-\Phi_{\lambda}({\lambda}))\max(1, \lambda^2)\mu^{\lambda}_0(d\lambda) < \infty\bigg \rbrace.
	\end{split}
\end{align}

Now we define:
\begin{align}\label{expreAv}
	\mathcal{A}_v = \left\{
	\begin{tabular}{l|l}
		\multirowcell{2}[0pt][l]{$\nu^v \in \mathcal{M}(\mathcal{H}_u)$} &
		\multirowcell{2}[0pt][l]{$\nu^v$ is equivalent to $\mu^v_0$ with ($\ref{DefApproxMeasure1}$) holding true,\\
			and $\Phi_v \in R^1_v \bigcap R^2_v$} \\
		&
	\end{tabular}
	\right\},
\end{align}
\begin{align}\label{expreAlam}
	\mathcal{A}_{\lambda} = \left\{
	\begin{tabular}{l|l}
		\multirowcell{2}[0pt][l]{$\nu^{\lambda} \in \mathcal{M}(\mathcal{H}_{\lambda})$} &
		\multirowcell{2}[0pt][l]{$\nu^{\lambda}$ is equivalent to $\mu^{\lambda}_0$ with ($\ref{DefApproxMeasure2}$) holding true,\\
			and $\Phi_{\lambda} \in R^1_{\lambda} \bigcap R^2_{\lambda}$} \\
		&
	\end{tabular}
	\right\}.
\end{align}

\begin{remark}
	\itshape
	Noticing that the definitions of $R^1_v$ and $R^1_{\lambda}$ are different from the corresponding definitions given in \cite{jia2021variational}. 
	Here, we employ a modified version of the general theory provided in the Appendix of \cite{Jia2022VINet}, which relaxes the uniform bounds making the theory more applicable to concrete problems. 
	For the reader's convenience, we briefly introduce the general theory in the Appendix. 
\end{remark}

Now, we give the following key theorem that provides formulas for calculating the potential functions 
$\Phi_v$ and $\Phi_{\lambda}$ introduced in (\ref{DefApproxMeasure}).

\begin{theorem}\label{the:posterior}
Let the prior measure $\mu_0$, noise measure, and the posterior measure $\mu$ are defined as in $(\ref{eq:prior}), (\ref{eq:noise}), (\ref{eq:post})$, and $\Phi(v, \lambda)$ is defined as in $(\ref{eq:likelihood})$.
If the approximate probability measure in problem $(\ref{optimProb})$ satisfies Assumption $\ref{assump1}$, then problem $(\ref{optimProb})$ possesses a solution $\nu = \nu^v \times \nu^{\lambda} \in \mathcal{M}(\mathcal{H})$ with the following form:
\begin{align}
 \frac{d\nu}{d\mu_0}(v, \lambda) \varpropto \exp \bigg(-(\Phi_v(v) + \Phi_{\lambda}(\lambda)) \bigg),
\end{align}
where
\begin{align}\label{PotenFun1}
\begin{split}
 \Phi_v(v) = \int_{\mathbb{R}}\Phi (v, \lambda)\nu^{\lambda}(d\lambda) + \text{Const}, \quad
 \Phi_{\lambda}(\lambda) = \int_{\mathcal{H}_u}\Phi (v, \lambda)\nu^v(dv) + \text{Const},
\end{split}
\end{align}
and here ``Const'' denotes constants that are not relevant to the interested parameters. 
Furthermore, we have
\begin{align}
\begin{split}
 \nu^v(v) \varpropto \exp (-\Phi_v(v))\mu^v_0(dv), \quad
 \nu^{\lambda}(\lambda) \varpropto \exp (-\Phi_{\lambda}(\lambda))\mu^{\lambda}_0(d\lambda). 
\end{split}
\end{align}
\end{theorem}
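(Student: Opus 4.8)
The plan is to obtain Theorem~\ref{the:posterior} as a specialization of the abstract mean-field iMFVI theorem recalled in the Appendix (the version from \cite{Jia2022VINet} that relaxes the uniform bounds of \cite{jia2021variational}); thus the task reduces to checking that the concrete NCP data $(\mu_0,\Phi,\mathcal{A}_v,\mathcal{A}_\lambda)$ satisfy that theorem's hypotheses. Two structural features of $\Phi(v,\lambda)=\tfrac{\tau}{2}\lVert\bm{d}-\lambda Hv\rVert^2$ do almost all of the work: it is \emph{nonnegative}, so each exponential $\exp\!\big(-\!\int\Phi\,d\nu\big)$ occurring below is bounded by $1$; and, using $\lVert Hv\rVert\le M\lVert v\rVert_{\mathcal{H}_u}$, it obeys the quadratic bound $\Phi(v,\lambda)\le\tau\lVert\bm{d}\rVert^2+\tau M^2\lambda^2\lVert v\rVert_{\mathcal{H}_u}^2$. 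Continuity of $\Phi$ in $(v,\lambda)$ was already recorded in the proof of Theorem~\ref{BayesTheoremNCP}.

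First I would verify Assumption~\ref{assump1}. Because $\Phi\ge0$ the set $A$ is the whole space and $1_A\equiv1$. For $T_1$: on $T^v_N$ one has $\lVert v\rVert_{\mathcal{H}_u}\le N$, so the quadratic bound gives $\Phi(v,\lambda)\le\tau\lVert\bm{d}\rVert^2+\tau M^2N^2\lambda^2$; integrating against $\nu^\lambda$ and using that $\nu^\lambda\in\mathcal{A}_\lambda$ forces $\Phi_\lambda\in R^2_\lambda$, hence $\int_{\mathbb{R}}\max(1,\lambda^2)\,\nu^\lambda(d\lambda)<\infty$, yields $T_1<\infty$. The estimate for $T_2$ is symmetric, with $\Phi_v\in R^2_v$ supplying the finite second $\mathcal{H}_u$-moment of $\nu^v$. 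For $T_3$ and $T_4$, nonnegativity of $\Phi$ makes the exponential factors $\le1$, so it suffices that $\int_{\mathcal{H}_u}\max(1,\lVert v\rVert_{\mathcal{H}_u}^2)\,\mu^v_0(dv)<\infty$ and $\int_{\mathbb{R}}\max(1,\lambda^2)\,\mu^\lambda_0(d\lambda)<\infty$; the first equals $1+\operatorname{tr}(\mathcal{C}_0)<\infty$ since $\mathcal{C}_0$ is trace-class, and the second holds because $\mu^\lambda_0$ is Gaussian on $\mathbb{R}$.

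Next I would invoke the abstract theorem to get a minimizer $\nu=\nu^v\times\nu^\lambda$ of (\ref{optimProb}) together with the fixed-point representation (\ref{PotenFun1}). The mechanism behind (\ref{PotenFun1}) is the identity $\frac{d(\nu^v\times\nu^\lambda)}{d\mu}=\frac{Z_\mu}{Z_\nu}\exp(\Phi-\Phi_v-\Phi_\lambda)$: fixing $\nu^\lambda$ and collecting the $\nu^v$-dependent terms of $D_{\mathrm{KL}}(\nu^v\times\nu^\lambda\Vert\mu)$, a short Radon--Nikodym rearrangement shows they equal $D_{\mathrm{KL}}\!\big(\nu^v\,\big\Vert\,\tilde\nu^v\big)$ up to a $\nu^v$-independent constant, where $\frac{d\tilde\nu^v}{d\mu^v_0}\propto\exp\!\big(-\!\int_{\mathbb{R}}\Phi(v,\lambda)\,\nu^\lambda(d\lambda)\big)$, and symmetrically in $\nu^\lambda$; this is exactly (\ref{PotenFun1}). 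Finally I would close the loop by confirming that these fixed-point potentials stay in the admissible sets, so the alternating minimization is well posed: $\Phi_v(v)=\int_{\mathbb{R}}\Phi(v,\lambda)\,\nu^\lambda(d\lambda)+\mathrm{Const}$ is bounded on each $T^v_N$ by the same computation used for $T_1$, giving $\Phi_v\in R^1_v$, while $\exp(-\Phi_v)$ is bounded (the integral term being nonnegative), so $R^2_v$ again reduces to $\operatorname{tr}(\mathcal{C}_0)<\infty$; the verification that $\Phi_\lambda\in R^1_\lambda\cap R^2_\lambda$ is identical, with $\mathcal{C}_0$ replaced by the trivially finite second moment of $\mu^\lambda_0$.

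The only point that needs genuine care is not any individual estimate --- all of them are elementary once nonnegativity and the quadratic bound are in hand --- but the bookkeeping of the mutual dependence between $\mathcal{A}_v$ and $\mathcal{A}_\lambda$: finiteness of $T_1$ and of $\sup_{v\in T^v_N}\Phi_v(v)$ relies on a finite second moment of $\nu^\lambda$, which is available precisely because $\nu^\lambda\in\mathcal{A}_\lambda$ was defined to require $\Phi_\lambda\in R^2_\lambda$, and symmetrically. One must check that this circular-looking structure is consistent, i.e.\ that admissible iterates are mapped to admissible iterates by the updates (\ref{PotenFun1}) --- which is exactly what the last verification above delivers.
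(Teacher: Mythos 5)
Your proposal is correct and follows essentially the same route as the paper: verify the four conditions of Assumption \ref{assump1} using the nonnegativity of $\Phi$ (so the exponentials in $T_3,T_4$ are bounded by one) together with the quadratic bound $\Phi(v,\lambda)\le\tau\lVert\bm{d}\rVert^2+\tau M^2\lambda^2\lVert v\rVert_{\mathcal{H}_u}^2$ and the moment conditions encoded in $R^2_v,R^2_\lambda$, then invoke the general iMFVI theorem from the Appendix. Your additional check that the fixed-point potentials $\Phi_v,\Phi_\lambda$ land back in $R^1\cap R^2$ is a sound (and slightly more careful) supplement, but it does not change the argument.
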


~\\

\begin{proof}

To prove the theorem, we need to verify the conditions $(\ref{expre:condi})$ given in Assumption $\ref{assump1}$.
Noticing the fact that operator $H$ is a bounded linear operator, we have $\lVert Hv \rVert \leqslant M\lVert v \rVert_{\mathcal{H}_u}$.

For term $T_1$, we have
\begin{align*}
	T_1 &= \sup \limits_{v \in T^v_N}\int_{\mathbb{R}}\Phi(v, \lambda)\cdot 1_{A}(v, \lambda)\nu^{\lambda}(d\lambda) \\
	&= \sup \limits_{v \in T^v_N}\int_{\mathbb{R}}\frac{1}{2}\lVert \bm{d} - \lambda Hv \rVert ^2_{\bm{\Gamma}_{\text{noise}}}\nu^{\lambda}(d\lambda) \\
	&= \sup \limits_{v \in T^v_N}\int_{\mathbb{R}}\frac{\tau}{2}\bigg (\lVert \bm{d} \rVert ^2 + \lVert \lambda Hv \rVert ^2 - 2\langle d, \lambda Hv \rangle \bigg )\nu^{\lambda}(d\lambda) \\
	&\leqslant \sup \limits_{v \in T^v_N}\int_{\mathbb{R}}\tau\bigg (\lVert \bm{d} \rVert ^2 + \lambda^2\lVert Hv \rVert ^2 \bigg )\nu^{\lambda}(d\lambda) \\
	&\leqslant \sup \limits_{v \in T^v_N} 2\tau M^2\lVert v \rVert^2_{\mathcal{H}_u} \int_{\mathbb{R}}\lambda^2\nu^{\lambda}(d\lambda) + \text{Const} \\
	&\leqslant (2\tau M^2N^2) \cdot \int_{\mathbb{R}}\exp(-\Phi_{\lambda}({\lambda}))\max(1, \lambda^2)\mu^{\lambda}_0(d\lambda) + \text{Const}.
\end{align*}
Recalling the set $R^2_{\lambda}$ defined in $(\ref{expre:R})$, since $\nu^{\lambda} \in \mathcal{A}_{\lambda}$, we have 
\begin{align*}
	\int_{\mathbb{R}}\exp(-\Phi_{\lambda}({\lambda}))\max(1, \lambda^2)\mu^{\lambda}_0(d\lambda) < \infty.
\end{align*}
Then we obtain $T_1 < \infty$.

For term $T_2$, we have
\begin{align*}
	T_2 &= \sup \limits_{\lambda \in T^{\lambda}_N}\int_{\mathcal{H}_u}\Phi(v, \lambda)\cdot 1_{A}(v, \lambda)\nu^{v}(dv) \\
	&= \sup \limits_{\lambda \in T^{\lambda}_N} \int_{\mathcal{H}_u}\frac{1}{2}\lVert \bm{d} - \lambda Hv \rVert ^2_{\bm{\Gamma}_{\text{noise}}}\nu^{v}(dv) \\
	&\leqslant \sup \limits_{\lambda \in T^{\lambda}_N}\int_{\mathcal{H}_u}\tau\bigg (\lVert \bm{d} \rVert ^2 + \lambda^2\lVert Hv \rVert ^2 \bigg )\nu^v(dv) \\
	&\leqslant \sup \limits_{\lambda \in T^{\lambda}_N} 2\tau M^2\lambda^2 \int_{\mathcal{H}_u}\lVert v \rVert ^2_{\mathcal{H}_u}\nu^v(dv) + \text{Const} \\
	&\leqslant (2\tau M^2N^2) \cdot \int_{\mathcal{H}_u}\exp(-\Phi_v(v))\max(1, \lVert v\rVert^2_{\mathcal{H}_u})\mu^v_0(dv) + \text{Const}.
\end{align*}
Recalling the set $R^2_v$ defined in $(\ref{expre:R})$, since $\nu^v \in \mathcal{A}_v$, we have 
\begin{align*}
	\int_{\mathcal{H}_u}\exp(-\Phi_v(v))\max(1, \lVert v\rVert^2_{\mathcal{H}_u})\mu^v_0(dv) < \infty.
\end{align*}
Then we obtain $T_2 < \infty$.

For term $T_3$, we notice $-\int_{\mathbb{R}}\Phi(v, \lambda)\cdot \nu^{\lambda}(d\lambda) \leqslant 0$.
Then the term $T_3$ can be estimated as follows:
\begin{align*}
	T_3 &\leqslant \int_{\mathcal{H}_u} \max(1,\lVert v \rVert^{2}_{\mathcal{H}_u})\mu^{v}_0(du).
\end{align*}

For term $T_4$, we use the same strategy to obtain that
\begin{align*}
	T_4 &\leqslant \int_{\mathbb{R}} \max(1, \lambda^2)\mu^{\lambda}_0(d\lambda) < \infty.
\end{align*}
The proof is completed by combining the estimates of $T_1, T_2, T_3$, and $T_4$.
\end{proof}

The above Theorem \ref{the:posterior} provides explicit expressions of the potential functions $\Phi_{v}$ and $\Phi_{\lambda}$ relating to the parameters $v$ and $\lambda$, respectively. 
Although the equalities (\ref{PotenFun1}) are not closed form solutions, they can yield a practical iterative scheme, which will be explicitly formulated in Subsection \ref{subsec2.5} below. 

\subsection{Relationships of CP and NCP formulations}\label{subsec2.4}
In this section, we discuss the relationships between CP and NCP formulations.
The choice of CP and NCP formulations depends on the methods employed, and both formulations have advantages and disadvantages.
For example, the CP formulation is more appropriate when computing the maximum a posterior estimates \cite{Dunlop2020SMAIJCM}. 
However, the NCP formulation will be better when the Metropolis-within-Gibbs algorithm is employed \cite{agapiou2014analysis}.
Before constructing an iterative scheme, let us first clarify the relationships between the CP and NCP formulations under the circumstances of developing iMFVI methods. 

Due to the mutually singular problem discussed in Subsection $\ref{subsec2.1}$, we could not formulate the CP problem based on the current iMFVI theory.
To avoid the obstacle, we need to introduce the covariance operator $\mathcal{C}^K_0$.
Thus, it is hard to reveal the connection between CP and NCP formulations in infinite-dimensional spaces.
In order to state the relationships more clearly, we need to start with the finite-dimensional case.
And then, we can clarify the relationships in the infinite-dimensional space with the help of the conclusions in the finite-dimensional space.
In this subsection, we shall prove that 
\begin{itemize}
	\item The VI problems formulated by CP and NCP attain the same minimal value;
	\item One possible minimum point (probability density function) of two problems can be transformed into each other by taking a reparameterization.
\end{itemize}

Recall that $\lbrace \alpha_k, e_k\rbrace_{k=1}^{\infty}$ is the eigensystem of the operator $\mathcal{C}_0$ such that $\mathcal{C}_0 e_k = \alpha_k e_k$, as we stated in Subsection $\ref{subsec2.1}$.
We denote by $P^N$ the orthogonal projection of $\mathcal{H}_u$ onto $\mathcal{H}^N_u$, that is $\mathcal{H}^N_u = P^N\mathcal{H}_u := \text{span}\lbrace e_1, e_2, \cdots, e_N \rbrace$.
Define $\bm{\mathcal{C}}^N_0 = P^N \mathcal{C}_0 P^N$, such that $\bm{u}^N \sim \mu^{(\bm{u}^N, \lambda)}_0 = \mathcal{N}(0, \lambda^2\bm{\mathcal{C}}^N_0)$, and let $\bm{u}^N = P^Nu = \sum^{N}_{k=1} u_ke_k \in \mathcal{H}^N_u$.

Let us consider the finite-dimensional linear inverse problem:
\begin{align*}
	\bm{d} = H\bm{u}^N + \bm{\epsilon},
\end{align*}
where $\bm{d} \in \mathbb{R}^{N_d}$, $\bm{\epsilon} \in \mathbb{R}^{N_d}$ is the random Gaussian noise with mean zero and variance $\bm{\Gamma}_{\text{noise}} = \tau^{-1}\textbf{I}$.
We take the prior measure $\widetilde{\mu}^N_0 = \mu^{(\bm{u}^N, \lambda)}_0 \times \mu^{\lambda}_0$, and the potential function
\begin{align*}
	\Phi(\bm{u}^N) = \frac{1}{2}\lVert H\bm{u}^N - \bm{d} \rVert^2_{\bm{\Gamma}_{\text{noise}}}.
\end{align*}
According to the Bayes' formula, the posterior measure $\widetilde{\mu}^N$ is given by the Rando-Nikodym derivative
\begin{align*}
	\frac{d\widetilde{\mu}^N}{d\widetilde{\mu}^N_0}(\bm{u}^N, \lambda) = \frac{1}{\widetilde{Z}^N}\exp(-\Phi(\bm{u}^N)),
\end{align*}
where $\widetilde{Z}^N = \int_{\mathcal{H}^N\times \mathbb{R}}\exp(-\Phi(\bm{u}^N))\widetilde{\mu}^N_0(d\bm{u}^N, d\lambda)$ is the normalization constant.

Taking the NCP formulation $\bm{u}^N = \lambda \bm{v}^N$, we reformulate the linear inverse problem as
\begin{align*}
	\bm{d} = H(\lambda\bm{v}^N) + \bm{\epsilon}.
\end{align*}
Since $\bm{u}^N \sim \mu^{(\bm{u}^N, \lambda)}_0$, we obviously find that $\bm{v}^N \sim \mu^{\bm{v}^N}_0 = \mathcal{N}(0, \bm{\mathcal{C}}^N_0)$, and $\bm{v}^N = P^Nv \in \mathcal{H}^N$.
We have the prior measure $\mu^N_0 = \mu^{\bm{v}^N}_0 \times \mu^{\lambda}_0$, and the potential function
\begin{align*}
	\Phi(\bm{v}^N, \lambda) = \frac{1}{2}\lVert H(\lambda \bm{v}^N) - \bm{d} \rVert^2_{\bm{\Gamma}_{\text{noise}}}.
\end{align*}
Then the posterior measure $\mu^N$ is given by the Rando-Nikodym derivative
\begin{align}\label{eq:ncfin}
	\frac{d\mu^N}{d\mu^N_0}(\bm{v}^N, \lambda) = \frac{1}{Z^N}\exp(-\Phi(\bm{v}^N, \lambda)),
\end{align}
where $Z^N = \int_{\mathcal{H}^N\times \mathbb{R}}\exp(-\Phi(\bm{v}^N, \lambda))\mu^N_0(d\bm{v}^N, d\lambda)$ is the normalization constant.
For notational convenience, we will use the same symbol for the probability measure and probability density function of the corresponding measure in this section.
For example, we use the symbol $\mu^N$ to denote the probability measure $\mu^N$ and the probability density function of $\mu^N$.

Before introducing the finite-dimensional VI problems, we need to clarify that:
\begin{itemize}
	\item In the CP case, we aim to find the approximated probability density $\widetilde{\nu}^{\dagger}(\bm{u}^N, \lambda)$ to minimize the KL divergence between $\widetilde{\nu}^N(\bm{u}^N, \lambda)$ and $\widetilde{\mu}^N(\bm{u}^N, \lambda)$.
	Under the mean-field assumption, the approximated probability density could be written as $\widetilde{\nu}^N(\bm{u}^N, \lambda) = \widetilde{\nu}^N_1(\bm{u}^N)\widetilde{\nu}^N_2(\lambda)$, where $\widetilde{\nu}^N_1(\bm{u}^N)$ and $\widetilde{\nu}^N_2(\lambda)$ are the probability density functions.
	\item In the NCP case, the aim is to find the approximated probability density $\nu^{\dagger}(\bm{v}^N, \lambda)$ to minimize the KL divergence between $\nu^N(\bm{v}^N, \lambda)$ and $\mu^N(\bm{v}^N, \lambda)$.
	Taking the assumptions stated in Subsection $\ref{subsec2.3}$, $\nu^N(\bm{v}^N, \lambda)$ could be written as $\nu^N(\bm{v}^N, \lambda) = \nu^N_1(\bm{v}^N)\nu^N_2(\lambda)$, where $\nu^N_1(\bm{v}^N)$ and $\nu^N_2(\lambda)$ are the probability density functions.
\end{itemize}
Based on these goals, we introduce the sets
\begin{align*}
	\widetilde{\mathcal{A}} &:= \bigg \lbrace \widetilde{\nu}^N(\bm{u}^N, \lambda) \ \bigg \vert \ \widetilde{\nu}^N(\bm{u}^N, d\lambda) = \widetilde{\nu}^N_1(\bm{u}^N)\widetilde{\nu}^N_2(\lambda) \bigg \rbrace, \\
	\mathcal{A} &:= \bigg \lbrace \nu^N(\bm{v}^N, \lambda) \ \bigg \vert \ \nu^N(\bm{v}^N, \lambda) = \nu^N_1(\bm{v}^N)\nu^N_2(\lambda) \bigg \rbrace.
\end{align*}

Then, the VI problem formulated by CP can be written as:
\begin{align}\label{eq:vicafin}
	\begin{split}
		&\mathop{\arg\min}\limits_{\widetilde{\nu}^N \in \widetilde{\mathcal{A}}} D_{{\text{KL}}} (\widetilde{\nu}^N \Vert \widetilde{\mu}^N)\\
		= &\mathop{\arg\min}\limits_{\widetilde{\nu}^N_1, \widetilde{\nu}^N_2}\int_{\mathcal{H}^N\times \mathbb{R}}	\log \frac{\widetilde{\nu}^N_1(\bm{u}^N)\widetilde{\nu}^N_2(\lambda)}{\widetilde{\mu}^N(\bm{u}^N, \lambda)}\widetilde{\nu}^N_1(\bm{u}^N)\widetilde{\nu}^N_2(\lambda)d\bm{u}^Nd\lambda.
	\end{split}
\end{align}
Similarly, the VI problem formulated by NCP has the following form:
\begin{align}\label{eq:vincfin}
	\begin{split}
		&\mathop{\arg\min}\limits_{\nu^N \in \mathcal{A}} D_{{\text{KL}}} (\nu^N \Vert \mu^N) \\
		= &\mathop{\arg\min}\limits_{\nu^N_1, \nu^N_2}\int_{\mathcal{H}^N\times \mathbb{R}} \log \frac{\nu^N_1(\bm{v}^N)\nu^N_2(\lambda)}{\mu^N(\bm{v}^N, \lambda)}\nu^N_1(\bm{v}^N)\nu^N_2(\lambda)d\bm{v}^Nd\lambda.
	\end{split}
\end{align}

Before discussing the relationship between problems $(\ref{eq:vicafin})$ and $(\ref{eq:vincfin})$, we need to clarify the relationship between the probabiltiy densities $\widetilde{\mu}^N(\bm{u}^N, \lambda)$, and $\mu^N(\bm{v}^N, \lambda)$, which are given by
\begin{align*}
	\widetilde{\mu}(\bm{u}^N, \lambda) &\varpropto \exp \bigg (-\frac{1}{2}\lVert H\bm{u}^N - \bm{d} \rVert^2_{\bm{\Gamma}_{\text{noise}}} \bigg )\widetilde{\mu}^N_0(\bm{u}^N, \lambda), \\
	\mu(\bm{v}^N, \lambda) &\varpropto \exp \bigg (-\frac{1}{2}\lVert H(\lambda \bm{v}^N) - \bm{d} \rVert^2_{\bm{\Gamma}_{\text{noise}}} \bigg )\mu^N_0(\bm{v}^N, \lambda).
\end{align*}
We know that 
\begin{align*}
	\widetilde{\mu}^N_0(\bm{u}^N, \lambda) = \frac{1}{(2\pi)^{\frac{N}{2}}\lambda^N\det(\bm{\mathcal{C}}^N_0)^{\frac{1}{2}}} \exp \bigg (-\frac{1}{2}\lVert \lambda^{-1}\bm{\mathcal{C}}^{-1/2}_0\bm{u}^N\rVert^2 \bigg )\mu^{\lambda}_0(\lambda),
\end{align*}
and 
\begin{align*}
	\mu^N_0(\bm{v}^N, \lambda) = \frac{1}{(2\pi)^{\frac{N}{2}}\det(\bm{\mathcal{C}}^N_0)^{\frac{1}{2}}}\exp \bigg (-\frac{1}{2}\lVert \bm{\mathcal{C}}^{-1/2}_0\bm{v}^N\rVert^2 \bigg )\mu^{\lambda}_0(\lambda).
\end{align*}
Taking $\bm{u}^N = \lambda \bm{v}^N$, it is clear that $\lambda^{-N}\mu^N_0(\bm{v}^N, \lambda) = \widetilde{\mu}^N_0(\bm{u}^N, \lambda)$, and furthermore
\begin{align}\label{eq:postrelate}
	\lambda^{-N}\mu^N(\bm{v}^N, \lambda) = \widetilde{\mu}^N(\bm{u}^N, \lambda).
\end{align}
Then equation $(\ref{eq:vicafin})$ can be written as 
\begin{align}\label{eq:vicafinn}
	\begin{split}
		&\mathop{\arg\min}\limits_{\widetilde{\nu}^N \in \widetilde{\mathcal{A}}} D_{{\text{KL}}} (\widetilde{\nu}^N \Vert \widetilde{\mu}^N)\\
		= &\mathop{\arg\min}\limits_{\widetilde{\nu}^N_1, \widetilde{\nu}^N_2}\int_{\mathcal{H}^N\times \mathbb{R}}	\log \frac{\widetilde{\nu}^N_1(\bm{u}^N)\widetilde{\nu}^N_2(\lambda)}{\widetilde{\mu}^N(\bm{u}^N, \lambda)}\widetilde{\nu}^N_1(\bm{u}^N)\widetilde{\nu}^N_2(\lambda)d\bm{u}^Nd\lambda \\
		= &\mathop{\arg\min}\limits_{\widetilde{\nu}^N_1, \widetilde{\nu}^N_2}\int_{\mathcal{H}^N\times \mathbb{R}}	\log \frac{\lambda^N\widetilde{\nu}^N_1(\bm{u}^N)\widetilde{\nu}^N_2(\lambda)}{\mu^N(\bm{v}^N, \lambda)}\lambda^N\widetilde{\nu}^N_1(\bm{u}^N)\widetilde{\nu}^N_2(\lambda)d\bm{v}^Nd\lambda.
	\end{split}
\end{align}

Next, we assume that there exist $\widetilde{\nu}^{*}_1(\bm{v}^N), \nu^{*}_1(\bm{u}^N)$ satisfying
\begin{align*}
	\widetilde{\nu}^{*}_1(\bm{v}^N) := \lambda^N \widetilde{\nu}^N_1(\bm{u}^N), \quad  \nu^{*}_1(\bm{u}^N) := \lambda^{-N}\nu^N_1(\bm{v}^N),
\end{align*}
and set
\begin{align*}
	\widetilde{\mathcal{A}}^{*} &:= \bigg \lbrace \widetilde{\nu}^N \ \bigg \vert \ \widetilde{\nu}^N(\bm{u}^N, \lambda) = \widetilde{\nu}^N_1(\bm{u}^N)\widetilde{\nu}^N_2(\lambda), \widetilde{\nu}^N_1(\bm{u}^N) = \lambda^{-N}\widetilde{\nu}^{*}_1(\bm{v}^N) \bigg \rbrace, \\
	\mathcal{A}^{*} &:= \bigg \lbrace \nu^N \ \bigg \vert \ \nu^N(\bm{v}^N, \lambda) = \nu^N_1(\bm{u}^N)\nu^N_2(\lambda), \nu^N_1(\bm{v}^N) = \lambda^N\nu^{*}_1(\bm{u}^N) \bigg \rbrace.
\end{align*}

Now we provide a theorem that illustrates the relationship between CP and NCP formulations in the finite-dimensional spaces.
\begin{theorem}\label{the:minimal}
	The minimal values of problems $(\ref{eq:vicafin})$ and $(\ref{eq:vincfin})$ are the same.
	That is
	\begin{align*}
	\min_{\widetilde{\nu}^N \in \widetilde{\mathcal{A}}} D_{{\text{KL}}} (\widetilde{\nu}^N \Vert \widetilde{\mu}^N) = \min_{\nu^N \in \mathcal{A}} D_{{\text{KL}}} (\nu^N \Vert \mu^N).
	\end{align*}
	Assume that $\nu^{\dagger} \in \mathcal{A}$ and $\widetilde{\nu}^{\dagger} \in \widetilde{\mathcal{A}}$ are one of the minimized points of these problems, respectively.
	If $\nu^{\dagger} \in \mathcal{A}^{*}, \widetilde{\nu}^{\dagger} \in \widetilde{\mathcal{A}}^{*}$ are satisfied, $\nu^{\dagger}$ and $\widetilde{\nu}^{\dagger}$ can be transformed into the solution of problems $(\ref{eq:vicafin})$ and $(\ref{eq:vincfin})$ by taking a reparameterization.
\end{theorem}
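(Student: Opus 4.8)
The strategy is to transport every object through the bijective reparameterization $\bm{u}^N=\lambda\bm{v}^N$ and to use that the Kullback--Leibler divergence is invariant under a simultaneous measurable, invertible change of variables applied to both of its arguments. The one analytic ingredient is the identity $(\ref{eq:postrelate})$, $\lambda^{-N}\mu^N(\bm{v}^N,\lambda)=\widetilde{\mu}^N(\bm{u}^N,\lambda)$, which, being an identity between fully normalized posteriors, also delivers $\widetilde{Z}^N=Z^N$ and the analogous relation for the priors. Feeding $(\ref{eq:postrelate})$ into $(\ref{eq:vicafin})$ already produces the reformulation $(\ref{eq:vicafinn})$; the task is then to recognize, for each CP trial density, which NCP trial density it is being measured against, and conversely.

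I would make this correspondence explicit. For $\widetilde{\nu}^N=\widetilde{\nu}^N_1(\bm{u}^N)\widetilde{\nu}^N_2(\lambda)\in\widetilde{\mathcal{A}}$, the density inside $(\ref{eq:vicafinn})$ is $\lambda^N\widetilde{\nu}^N_1(\lambda\bm{v}^N)\widetilde{\nu}^N_2(\lambda)$, i.e., the density in the coordinates $(\bm{v}^N,\lambda)$ of the pushforward of $\widetilde{\nu}^N$ under $(\bm{v}^N,\lambda)\mapsto(\lambda\bm{v}^N,\lambda)$ (Jacobian $\lambda^N$). When $\widetilde{\nu}^N\in\widetilde{\mathcal{A}}^{*}$, that is, exactly when $\lambda^N\widetilde{\nu}^N_1(\lambda\bm{v}^N)=\widetilde{\nu}^{*}_1(\bm{v}^N)$ really is a function of $\bm{v}^N$ alone, this pushforward is the product $\widetilde{\nu}^{*}_1(\bm{v}^N)\widetilde{\nu}^N_2(\lambda)\in\mathcal{A}$, and $(\ref{eq:vicafinn})$ becomes $D_{\text{KL}}(\widetilde{\nu}^{*}_1\widetilde{\nu}^N_2 \Vert \mu^N)$. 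Running the same computation from $\nu^N\in\mathcal{A}^{*}$ shows that $D_{\text{KL}}(\nu^N \Vert \mu^N)$ equals the CP objective $D_{\text{KL}}(\cdot \Vert \widetilde{\mu}^N)$ evaluated at the pushforward of $\nu^N$ under $(\bm{u}^N,\lambda)\mapsto(\bm{u}^N/\lambda,\lambda)$, which belongs to $\widetilde{\mathcal{A}}$. Hence $\widetilde{\nu}^N\mapsto(\widetilde{\nu}^{*}_1,\widetilde{\nu}^N_2)$ and $\nu^N\mapsto(\nu^{*}_1,\nu^N_2)$ are mutually inverse, objective-preserving bijections between $\widetilde{\mathcal{A}}^{*}$ and $\mathcal{A}^{*}$; taking infima yields equality of the optimal values over these families, and the bijection carries a minimizer of one problem to a minimizer of the other. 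This is precisely the conditional (reparameterization) assertion of the theorem: if the minimizers $\widetilde{\nu}^{\dagger}$ and $\nu^{\dagger}$ happen to lie in $\widetilde{\mathcal{A}}^{*}$ and $\mathcal{A}^{*}$, they are related by this change of variables.

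The subtle step, and where I expect the real work to be, is that equality of the minimal values is asserted for the full mean-field families $\widetilde{\mathcal{A}}$ and $\mathcal{A}$, not merely for the starred subfamilies: a generic product density in $(\bm{u}^N,\lambda)$ does not remain a product after the $\lambda$-dependent change of variables, since the Jacobian $\lambda^{\pm N}$ entangles the first block with $\lambda$, so the two infima in $(\ref{eq:vicafin})$ and $(\ref{eq:vincfin})$ are a priori taken over sets that are not images of one another. Closing this gap requires exploiting the coordinate-update (fixed-point) characterization of the optimizers, Theorem $\ref{the:posterior}$ for the NCP problem together with its finite-dimensional analogue for the CP problem: in the present finite-dimensional linear--Gaussian setting every conditional update is available in closed form ($\widetilde{\nu}^N_1$, $\nu^N_1$ and $\nu^N_2$ are Gaussian while $\widetilde{\nu}^N_2$ is an explicit non-Gaussian density on $\mathbb{R}$), so the two optimal objective values can be compared by a direct computation using these closed forms and $(\ref{eq:postrelate})$. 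Once the finite-dimensional identity is established, the infinite-dimensional relationship announced at the start of the subsection follows by passing to the limit $N\to\infty$ and using the projection consistency of $\bm{\mathcal{C}}^N_0$, $\mu^N$ and $\widetilde{\mu}^N$ together with the admissibility sets of Subsection $\ref{subsec2.3}$.
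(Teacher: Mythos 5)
Your first half reproduces the paper's argument almost exactly: the identity $(\ref{eq:postrelate})$ turns $(\ref{eq:vicafin})$ into $(\ref{eq:vicafinn})$, and the Jacobian-corrected correspondence $\widetilde{\nu}^N_1(\bm{u}^N)\mapsto\lambda^N\widetilde{\nu}^N_1(\lambda\bm{v}^N)$ gives an objective-preserving bijection between $\widetilde{\mathcal{A}}^{*}$ and $\mathcal{A}^{*}$, hence equality of the optimal values over the starred families and the stated transformation of minimizers. The paper phrases this as a two-sided inequality rather than a bijection, but the content is the same, and your pushforward formulation is arguably cleaner.

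The genuine gap is in the second half. You correctly identify that the theorem asserts equality of the minima over the full mean-field families $\widetilde{\mathcal{A}}$ and $\mathcal{A}$, and that the $\lambda$-dependent Jacobian $\lambda^{\pm N}$ prevents a generic product density from mapping to a product density. But you then only describe a plan ("a direct computation using these closed forms and $(\ref{eq:postrelate})$") without executing it, so the step that actually carries the theorem is missing from your proof. The paper closes this differently: it invokes the Gaussianity of the optimal first components $\widetilde{\nu}^N_1=\mathcal{N}(\widetilde{\bm{a}},\widetilde{\bm{\Sigma}})$ and $\nu^N_1=\mathcal{N}(\bm{a},\bm{\Sigma})$ (from the coordinate-update characterization, as in Subsection $\ref{subsec2.5}$), writes out the densities explicitly to exhibit $\widetilde{\nu}^N_1(\bm{u}^N)=\lambda^{-N}\widetilde{\nu}^{N\prime}_1(\bm{v}^N)$ with $\widetilde{\nu}^{N\prime}_1=\mathcal{N}(\widetilde{\bm{a}}/\lambda,\widetilde{\bm{\Sigma}}/\lambda^2)$ (and symmetrically for $\nu^N_1$), and concludes $\widetilde{\mathcal{A}}\subset\widetilde{\mathcal{A}}^{*}$ and $\mathcal{A}\subset\mathcal{A}^{*}$, so that the starred-set identity $(\ref{conclu:Astar})$ transfers to the unstarred families. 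If you want to complete your version, you must either reproduce that inclusion argument or actually carry out the closed-form comparison of the two optimal objective values; as written, the claim is asserted rather than proved. Finally, your closing remark about passing to the limit $N\to\infty$ is extraneous here: Theorem $\ref{the:minimal}$ is a purely finite-dimensional statement, and the paper's connection to the infinite-dimensional problem is made separately in the remark following the theorem, not inside its proof.
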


\begin{remark}
	\itshape
	Let $\widehat{\nu} = \widehat{\nu}^v \times \widehat{\nu}^{\lambda}$ be the measure given by Theorem $\ref{the:posterior}$, which is the solution of the NCP-iMFVI problem $(\ref{expre:problem})$.
	Let $\widehat{\nu}^N$ be the pushforward of the measure $\widehat{\nu}$ defined on space $\mathcal{H}^N_u \times \mathbb{R}$, i.e., $\widehat{\nu}^N = \widehat{\nu} \circ (P^N)^{-1}$.
	From Theorem $\ref{the:minimal}$, we know that $\widehat{\nu}^N \in \mathcal{A}^{*}$.
	That is, the measure calculated by Theorem $\ref{the:posterior}$ can be transformed to the solution of problem $(\ref{eq:vicafin})$ formulated by CP in finite-dimensional space.
\end{remark}

\begin{proof}
	Considering the following problems:
	\begin{align}
		\label{prob:caA}\min_{\widetilde{\nu}^N \in \widetilde{\mathcal{A}}^{*}} D_{{\text{KL}}} (\widetilde{\nu}^N\Vert \widetilde{\mu}^N), \\
		\label{prob:ncA}\min_{\nu^N \in \mathcal{A}^{*}} D_{{\text{KL}}} (\nu^N \Vert \mu^N).
	\end{align}
	For notational convenience, let 
	\begin{align*}
		\widetilde{\nu}^{\dagger}(\bm{u}^N, \lambda) = \widetilde{\nu}^{\dagger}_1(\bm{u}^N)\widetilde{\nu}^{\dagger}_2(\lambda) \in \widetilde{\mathcal{A}}^{*}, \quad
		\nu^{\dagger}(\bm{v}^N, \lambda) = \nu^{\dagger}_1(\bm{v}^N)\nu^{\dagger}_2(\lambda) \in \mathcal{A}^{*}
	\end{align*}
	be the solutions to problem $(\ref{prob:caA})$ and $(\ref{prob:ncA})$, respectively.
	Then we have
	\begin{align*}
		D_{{\text{KL}}} (\widetilde{\nu}^{\dagger} \Vert \widetilde{\mu}^N) 
		= &\int_{\mathcal{H}^N\times \mathbb{R}} \log \frac{\lambda^N\widetilde{\nu}^{\dagger}_1(\bm{u}^N)\widetilde{\nu}^{\dagger}_2(\lambda)}{\mu^N(\bm{v}^N, \lambda)}\lambda^N\widetilde{\nu}^{\dagger}_1(\bm{u}^N)\widetilde{\nu}^{\dagger}_2(\lambda)d\bm{v}^Nd\lambda \\
		= &\int_{\mathcal{H}^N\times \mathbb{R}} \log \frac{\widetilde{\nu}^{\dagger*}_1(\bm{v}^N)\widetilde{\nu}^{\dagger}_2(\lambda)}{\mu^N(\bm{v}^N, \lambda)}\widetilde{\nu}^{\dagger*}_1(\bm{v}^N)\widetilde{\nu}^{\dagger}_2(\lambda)d\bm{v}^Nd\lambda \\
		\geq &\int_{\mathcal{H}^N\times \mathbb{R}} \log \frac{\nu^{\dagger}_1(\bm{v}^N)\nu^{\dagger}_2(\lambda)}{\mu^N(\bm{v}^N, \lambda)}\nu^{\dagger}_1(\bm{v}^N)\nu^{\dagger}_2(\lambda)d\bm{v}^Nd\lambda \\
		= &\min_{\nu^N \in \mathcal{A}^{*}} D_{{\text{KL}}} (\nu^N \Vert \mu^N),
	\end{align*}
	where we use the equation $(\ref{eq:postrelate})$ in the first equation.
	Using the same strategy, we obtain that 
	\begin{align*}
		D_{{\text{KL}}} (\nu^{\dagger} \Vert \mu^N) \geq \min_{\widetilde{\nu}^N \in \widetilde{\mathcal{A}}^{*}} D_{{\text{KL}}} (\widetilde{\nu}^N\Vert \widetilde{\mu}^N).
	\end{align*}
	Combining these two inequalities, we have
	\begin{align*}
		\min_{\nu^N \in \mathcal{A}^{*}} D_{{\text{KL}}} (\nu^N \Vert \mu^N) \geq \min_{\widetilde{\nu}^N \in \widetilde{\mathcal{A}}^{*}} D_{{\text{KL}}} (\widetilde{\nu}^N\Vert \widetilde{\mu}^N) \geq \min_{\nu^N \in \mathcal{A}^{*}} D_{{\text{KL}}} (\nu^N \Vert \mu^N),
	\end{align*}
	which indicates
	\begin{align}\label{conclu:Astar}
		\min_{\widetilde{\nu}^N \in \widetilde{\mathcal{A}}^{*}} D_{{\text{KL}}} (\widetilde{\nu}^N\Vert \widetilde{\mu}^N) = \min_{\nu^N \in \mathcal{A}^{*}} D_{{\text{KL}}} (\nu^N \Vert \mu^N).
	\end{align}
	Now these two VI problems have the same minimal value.
	
	Due to the settings of prior measures and noise, measures $\widetilde{\nu}^N_1(d\bm{u}^N), \nu^N_1(d\bm{v}^N)$ are Gaussians by Theorem $\ref{the:posterior}$. 
	Since similar discussions are provided in Subsection $\ref{subsec2.5}$, we omit the details here.
	Without loss of generality, we assume that $\widetilde{\nu}^N_1 = \mathcal{N}(\widetilde{\bm{a}}, \widetilde{\bm{\Sigma}})$, and $\nu^N_1 = \mathcal{N}(\bm{a}, \bm{\Sigma})$, then we have
	\begin{align*}
		\widetilde{\nu}^N_1(\bm{u}^N) &= \frac{1}{(2\pi)^{\frac{N}{2}}\det(\widetilde{\bm{\Sigma}})^{\frac{1}{2}}}\exp \bigg (-\frac{1}{2}\lVert \widetilde{\bm{\Sigma}}^{-1/2}(\bm{u}^N - \widetilde{\bm{a}})\rVert^2 \bigg ) \\
		&= \frac{1}{(2\pi)^{\frac{N}{2}}\det(\widetilde{\bm{\Sigma}})^{\frac{1}{2}}}\exp \bigg (-\frac{1}{2}\lVert \lambda\widetilde{\bm{\Sigma}}^{-1/2}(\bm{v}^N - \widetilde{\bm{a}}/\lambda)\rVert^2 \bigg ),
	\end{align*}
	and
	\begin{align*}
		\nu^N_1(\bm{v}^N) &= \frac{1}{(2\pi)^{\frac{N}{2}}\det(\bm{\Sigma})^{\frac{1}{2}}}\exp \bigg (-\frac{1}{2}\lVert \bm{\Sigma}^{-1/2}(\bm{v}^N - \bm{a})\rVert^2 \bigg ) \\
		&= \frac{1}{(2\pi)^{\frac{N}{2}}\det(\bm{\Sigma})^{\frac{1}{2}}}\exp \bigg (-\frac{1}{2}\lVert \lambda^{-1}\bm{\Sigma}^{-1/2}(\bm{u}^N - \lambda\bm{a})\rVert^2 \bigg ).
	\end{align*} 
	Then measures $\widetilde{\nu}^{N\prime}_1 = \mathcal{N}(\widetilde{\bm{a}}/\lambda, \widetilde{\bm{\Sigma}}/\lambda^2)$, and $\nu^{N\prime}_1 = \mathcal{N}(\lambda\bm{a}, \lambda^2\bm{\Sigma})$ satisfy
	\begin{align*}
		\widetilde{\nu}^N_1(\bm{u}^N) = \lambda^{-N}\widetilde{\nu}^{N\prime}_1(\bm{v}^N) , \quad 
		\nu^N_1(\bm{v}^N) = \lambda^N\nu^{N\prime}_1(\bm{u}^N),
	\end{align*}
	which illustrates that
	\begin{align*}
		\widetilde{\nu}^N(\bm{u}^N ,\lambda) = \widetilde{\nu}^N_1(\bm{u}^N)\widetilde{\nu}^N_2(\lambda) \in \widetilde{\mathcal{A}}^{*}, \quad
		\nu^N(\bm{v}^N, \lambda) = \nu^N_1(\bm{v}^N)\nu^N_2(\lambda) \in \mathcal{A}^{*}.
	\end{align*}
	
	As a result, for all $\widetilde{\nu}^N(\bm{u}^N ,\lambda) \in \widetilde{\mathcal{A}}$ and $\nu^N(\bm{v}^N, \lambda) \in \mathcal{A}$, due to the measures $\widetilde{\nu}^N_1(d\bm{u}^N)$, and $\nu^N_1(d\bm{v}^N)$ are Gaussians, we have 
	\begin{align*}
		\widetilde{\nu}^N(\bm{u}^N ,\lambda) \in \widetilde{\mathcal{A}}^{*}, \quad 
		\nu^N(\bm{v}^N, \lambda) \in \mathcal{A}^{*}.
	\end{align*}
	That is, $\widetilde{\mathcal{A}}\subset \widetilde{\mathcal{A}}^{*}, \mathcal{A} \subset \mathcal{A}^{*}$.
	Together with $(\ref{conclu:Astar})$, this indicates that
	\begin{align*}
		\min_{\widetilde{\nu}^N \in \widetilde{\mathcal{A}}} D_{{\text{KL}}} (\widetilde{\nu}^N\Vert \widetilde{\mu}^N) = \min_{\nu^N \in \mathcal{A}} D_{{\text{KL}}} (\nu^N \Vert \mu^N).
	\end{align*}

	Then we have
	\begin{align*}
		\min_{\widetilde{\nu}^N \in \widetilde{\mathcal{A}}} D_{{\text{KL}}} (\widetilde{\nu}^N\Vert \widetilde{\mu}^N) 
		= &\int_{\mathcal{H}^N\times \mathbb{R}} \log \frac{\lambda^N\widetilde{\nu}^{\dagger}_1(\bm{u}^N)\widetilde{\nu}^{\dagger}_2(\lambda)}{\mu^N(\bm{v}^N, \lambda)}\lambda^N\widetilde{\nu}^{\dagger}_1(\bm{u}^N)\widetilde{\nu}^{\dagger}_2(\lambda)d\bm{v}^Nd\lambda \\
		= &\int_{\mathcal{H}^N\times \mathbb{R}} \log \frac{\nu^{\dagger}_1(\bm{v}^N)\nu^{\dagger}_2(\lambda)}{\mu^N(\bm{v}^N, \lambda)}\nu^{\dagger}_1(\bm{v}^N)\nu^{\dagger}_2(\lambda)d\bm{v}^Nd\lambda \\
		= &\min_{\nu^N \in \mathcal{A}} D_{{\text{KL}}} (\nu^N \Vert \mu^N).
	\end{align*}
	This indicates that the probability densities $\lambda^N\widetilde{\nu}^{\dagger}_1(\bm{u}^N)\widetilde{\nu}^{\dagger}_2(\lambda)$ and $\nu^{\dagger}_1(\bm{v}^N)\nu^{\dagger}_2(\lambda)$ are both one of the minimized points of the problems $(\ref{eq:vincfin})$.
	Using the same strategy, the probability densities $\lambda^{-N}\nu^{\dagger}_1(\bm{v}^N)\nu^{\dagger}_2(\lambda)$ and $\widetilde{\nu}^{\dagger}_1(\bm{u}^N)\widetilde{\nu}^{\dagger}_2(\lambda)$ are both one of the minimized points of problem $(\ref{eq:vicafinn})$.
	Combining these two conclusions, we obtain that:
	$\lambda^{-N}\nu^{\dagger}(\bm{v}^N, \lambda)$ and $\lambda^N\widetilde{\nu}^{\dagger}(\bm{u}^N, \lambda)$ are the minimized points of problems $(\ref{eq:vicafinn})$ and $(\ref{eq:vincfin})$, respectively, which can be transformed by $\nu^{\dagger}(\bm{v}^N, \lambda)$ and $\widetilde{\nu}^{\dagger}(\bm{u}^N, \lambda)$.
	That is, the minimized points of problems $(\ref{eq:vicafinn})$ and $(\ref{eq:vincfin})$ can be transformed into each other by taking the stated transformation.
	Here we complete the proof.
\end{proof}

We accomplish those two goals mentioned at the beginning of this subsection with the help of Theorem $\ref{the:minimal}$. 
As mentioned in the studies of \cite{bishop2006pattern, jia2021variational, Pinski2015SIAMMA}, 
we cannot expect that the two VI problems $(\ref{eq:vicafin})$ and $(\ref{eq:vincfin})$ have a unique solution. 
That is to say, the results stated in Theorem \ref{the:minimal} cannot ensure that all of the solutions to the two minimization problems can be transformed into each other. In this sense, the conclusions of Theorem \ref{the:minimal} are the best results we can hope for. 

We cannot employ the iMFVI theory under the CP formulation in the infinite-dimensional space due to the singularity issue recalled in Subsection \ref{subsec2.1}. 
In Subsections \ref{subsec2.2} and \ref{subsec2.3}, we illustrate that iMFVI theory can be applied under the NCP formulation without the critical singularity problems. 
In the finite-dimensional setting, the iMFVI theory (reduced to the corresponding finite-dimensional theory) can be constructed under both circumstances and, in addition, are intimately related to each other, as demonstrated in Theorem \ref{the:minimal}. 
As a result, it is natural to use NCP formulation to solve the hierarchical inference problem in the infinite-dimensional case. 

\subsection{Iterative algorithm}\label{subsec2.5}
We construct an iterative algorithm using Theorem $\ref{the:posterior}$, when the parameter $v$ belongs to the separable Hilbert space $\mathcal{H}_u$.
Before stating our algorithm, we need to introduce two operator norms and verify some properties of the operator $H^{*}H$.

Following \cite{reed2012methods}, let us assume that the operator $T_1$ is of trace-class, and the operator $T_2$ is a Hilbert-Schmidt operator, both of them are defined on a Hilbert space $\mathcal{H}_0$.
Then we define the operator norms:
\begin{align*}
	\lVert T_1\rVert_{\text{Tr}} = \text{Tr}\sqrt{T^{*}_1T_1} < \infty, \quad \lVert T_2\rVert_{\text{HS}} = \sqrt{\text{Tr}(T^{*}_2T_2)} < \infty,
\end{align*}
where $T^{*}_1, T^{*}_2$ represent the adjoint operators of $T_1$ and $T_2$, respectively. 
We denote $\text{Tr}(\cdot)$ by taking the trace of the operator.
Then we introduce some properties.

\begin{lemma}\label{lemma1}
	Since the operator $H:\mathcal{H}_u \rightarrow \mathbb{R}^{N_d}$ is a bounded linear operator, we have
	\begin{enumerate}
		\item $H^{*}H$ is a Hilbert-Schmidt operator.
		\item There exists a complete orthonormal basis $\lbrace \phi_k\rbrace^{\infty}_{k=1}$ for $\mathcal{H}_u$, which satisfies $H^{*}H\phi_k = \xi_k\phi_k$, and $\xi_k$ is the $k$-th eigenvalue of $H^{*}H$. 
		\item The Hilbert-Schmidt operator norm of $H^{*}H$ can be controlled by 
		\begin{align*}
			\lVert H^{*}H \rVert_{\text{HS}} \leq \sqrt{\lVert HH^{*} \rVert_{\text{Op}}}\lVert H^{*} \rVert_{\text{HS}},
		\end{align*}
		where $\lVert\cdot \rVert_{\text{Op}}$ denotes the operator norm.
	\end{enumerate}
\end{lemma}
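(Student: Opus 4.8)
The plan is to prove the three claims in order, exploiting the fact that $H:\mathcal{H}_u\to\mathbb{R}^{N_d}$ has finite-rank range and hence everything in sight is finite-rank or trace-class.

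\medskip

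\noindent\textbf{Claim (1): $H^*H$ is Hilbert--Schmidt.}
First I would observe that $H^*:\mathbb{R}^{N_d}\to\mathcal{H}_u$ is a bounded linear map from a finite-dimensional space, hence its range has dimension at most $N_d$; therefore $H^*$ is a finite-rank operator, in particular Hilbert--Schmidt, and so is its composition $H^*H$ (the Hilbert--Schmidt operators form an ideal, so composing a bounded operator $H$ with a Hilbert--Schmidt operator $H^*$ yields a Hilbert--Schmidt operator). Concretely, letting $\{\bm f_1,\dots,\bm f_{N_d}\}$ be the standard basis of $\mathbb{R}^{N_d}$, one can write $Hv=\sum_{i=1}^{N_d}\langle v,h_i\rangle_{\mathcal{H}_u}\bm f_i$ for suitable $h_i\in\mathcal{H}_u$ (Riesz representation), whence $H^*\bm f_i=h_i$ and $H^*H=\sum_{i=1}^{N_d} h_i\otimes h_i$, manifestly a finite-rank (hence Hilbert--Schmidt) operator. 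This claim is essentially immediate and will not be the obstacle.

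\medskip

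\noindent\textbf{Claim (2): spectral basis for $H^*H$.}
Since $H^*H$ is self-adjoint (indeed $(H^*H)^*=H^*H^{**}=H^*H$), nonnegative ($\langle H^*Hv,v\rangle=\lVert Hv\rVert^2\geqslant 0$), and compact (being finite-rank, hence a fortiori Hilbert--Schmidt $\Rightarrow$ compact), the Hilbert--Schmidt spectral theorem for compact self-adjoint operators on a separable Hilbert space applies: there is a complete orthonormal system $\{\phi_k\}_{k=1}^{\infty}$ of $\mathcal{H}_u$ consisting of eigenvectors, $H^*H\phi_k=\xi_k\phi_k$ with $\xi_k\geqslant 0$ (and only finitely many $\xi_k$ nonzero, the rest spanning $\ker H^*H=\ker H$). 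This is a direct invocation of the standard theorem and again not the hard step.

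\medskip

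\noindent\textbf{Claim (3): the norm bound.}
This is the one requiring a short computation. Using the eigenbasis from Claim (2), $\lVert H^*H\rVert_{\mathrm{HS}}^2=\mathrm{Tr}\big((H^*H)^*(H^*H)\big)=\sum_k \xi_k^2$. The plan is to bound each $\xi_k^2=\xi_k\cdot\xi_k$ and factor: write $\xi_k=\langle H^*H\phi_k,\phi_k\rangle_{\mathcal{H}_u}=\langle HH^*(H\phi_k),H\phi_k\rangle_{\mathbb{R}^{N_d}}/\lVert\cdot\rVert$-type manipulation, or more cleanly, note that the nonzero eigenvalues of $H^*H$ coincide with those of $HH^*$, so $\xi_k\leqslant \lVert HH^*\rVert_{\mathrm{Op}}$ for every $k$. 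Then
\begin{align*}
\lVert H^*H\rVert_{\mathrm{HS}}^2=\sum_k\xi_k^2\leqslant \lVert HH^*\rVert_{\mathrm{Op}}\sum_k\xi_k=\lVert HH^*\rVert_{\mathrm{Op}}\,\mathrm{Tr}(H^*H)=\lVert HH^*\rVert_{\mathrm{Op}}\,\lVert H^*\rVert_{\mathrm{HS}}^2,
\end{align*}
using $\mathrm{Tr}(H^*H)=\mathrm{Tr}(HH^*)=\lVert H^*\rVert_{\mathrm{HS}}^2=\lVert H\rVert_{\mathrm{HS}}^2$ and $\xi_k\geqslant0$. Taking square roots gives the stated inequality $\lVert H^*H\rVert_{\mathrm{HS}}\leqslant\sqrt{\lVert HH^*\rVert_{\mathrm{Op}}}\,\lVert H^*\rVert_{\mathrm{HS}}$. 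The only mild subtlety — the ``main obstacle'' such as it is — is justifying $\xi_k\leqslant\lVert HH^*\rVert_{\mathrm{Op}}$ for the $H^*H$-eigenvalues; this follows because if $H^*H\phi_k=\xi_k\phi_k$ with $\xi_k>0$ then $\psi_k:=H\phi_k\neq0$ satisfies $HH^*\psi_k=H(H^*H\phi_k)=\xi_k\psi_k$, so $\xi_k$ is an eigenvalue of $HH^*$ and hence $\xi_k\leqslant\lVert HH^*\rVert_{\mathrm{Op}}$, while for $\xi_k=0$ the bound is trivial. Everything else is bookkeeping with traces.
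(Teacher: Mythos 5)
Your proof is correct and follows essentially the same route as the paper: finite-rank/Hilbert--Schmidt considerations for claim (1), the spectral theorem for compact self-adjoint operators for claim (2), and the key estimate $\mathrm{Tr}\big((H^*H)^2\big)\leq\lVert HH^*\rVert_{\mathrm{Op}}\,\mathrm{Tr}(H^*H)$ for claim (3). The only (cosmetic) difference is in claim (3): you obtain the estimate by diagonalizing and bounding each eigenvalue $\xi_k$ via the coincidence of nonzero spectra of $H^*H$ and $HH^*$, whereas the paper derives the operator inequality $H^*HH^*H\leq\lVert HH^*\rVert_{\mathrm{Op}}H^*H$ from the quadratic form of $HH^*$ and then takes traces — both are valid instances of the same bound.
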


\begin{proof}
	According to the definition of the Hilbert-Schmidt operator in Appendix C in \cite{da2014stochastic}, we have $\lVert H \rVert_{\text{HS}} = \lVert H^{*} \rVert_{\text{HS}} = \sum^{N_d}_{k=1}\lVert H^{*}f_k \rVert^2_{\mathcal{H}_u} < \infty$, where $\lbrace f_k \rbrace^{N_d}_{k=1}$ denotes the complete orthonormal bases defined on $\mathbb{R}^{N_d}$.
	The operator $H$ and $H^{*}$ are both the Hilbert-Schmidt operator.
	Based on Proposition C.4 in \cite{da2014stochastic}, operator $H^{*}H$ is of trace-class defined on $\mathcal{H}_u$, which is exactly a Hilbert-Schmidt operator.
	
	According to Theorem VI.22 \cite{reed2012methods}, we can precisely know that $H^{*}H$ is compact.
	Combining that $H^{*}H$ is self-adjoint, we can obtain the first conclusion immediately according to the Hilbert-Schmidt theorem (Theorem VI.16 \cite{reed2012methods}).
	
	For notational convenience, we will use the symbol $\langle \cdot, \cdot \rangle$ to denote the canonical inner product $\langle \cdot, \cdot \rangle_{\mathbb{R}^{N_d}}$ defined on the space $\mathbb{R}^{N_d}$.
	Taking $\lVert H^{*}H \rVert^2_{HS} = \text{Tr}(H^{*}HH^{*}H)$, for all $x \in \mathbb{R}^{N_d}$, we notice that
	\begin{align*}
		\langle HH^{*}x, x\rangle_{\mathbb{R}^{N_d}} \leq \lVert x\rVert \lVert HH^{*}x\rVert \leq \lVert x\rVert \lVert HH^{*}\rVert_{\text{Op}} \lVert x\rVert = \lVert HH^{*}\rVert_{\text{Op}} \langle x, x\rangle.
	\end{align*}
	That is $HH^{*} \leq \lVert HH^{*}\rVert_{\text{Op}}\text{I}$, where $\text{I}$ denotes the identity operator.	
	And for all $x \in \mathcal{H}_u$, it is clear that
	\begin{align*}
		\langle H^{*}HH^{*}Hx, x\rangle \leq \langle HH^{*}Hx, Hx\rangle \leq \lVert HH^{*}\rVert_{\text{Op}} \langle Hx, Hx\rangle.
	\end{align*}
	We derive that $H^{*}HH^{*}H \leq \lVert HH^{*}\rVert_{\text{Op}} H^{*}H$.
	For the term $\text{Tr}(H^{*}HH^{*}H)$, we have
	\begin{align*}
		\text{Tr}(H^{*}HH^{*}H) \leq \lVert HH^{*} \rVert_{\text{Op}} \text{Tr}(H^{*}H) = \lVert HH^{*} \rVert_{\text{Op}}\lVert H \rVert^2_{\text{HS}}.
	\end{align*}
	Then we derive
	\begin{align*}
		\lVert H^{*}H \rVert_{\text{HS}} \leq \sqrt{\lVert HH^{*} \rVert_{\text{Op}}}\lVert H^{*} \rVert_{\text{HS}}.
	\end{align*}
\end{proof}

\begin{remark}
	\itshape
	Following Lemma $\ref{lemma1}$, once we assume the operator $H:\mathcal{H}_u \rightarrow \mathbb{R}^{N_d}$ is a bounded linear operator, the conclusions can be obtained without any additional assumptions.
	However, if the range of $H$ changes to an infinite-dimensional space, the conclusions in Lemma $\ref{lemma1}$ still hold when the operator $H^{*}H$ is assumed to be a Hilbert-Schmidt operator, see \cite{engl1996regularization}.
\end{remark}

\textbf{Calculate $\Phi_v(v)$}: 
Applying the formula $(\ref{PotenFun1})$ in Theorem $\ref{the:posterior}$, we obtain
\begin{align*}
 \Phi_v(v) &= \int_{\mathbb{R}} \bigg(\frac{1}{2} \lVert \bm{d}-H\lambda v \rVert ^2_{\bm{\Gamma}_{\text{noise}}} \bigg)\nu^{\lambda}(d\lambda)+\text{Const} \\
 &= {\frac{1}{2} \bigg(\mathcal{C}_{\lambda} \lVert Hv\rVert^2_{\Gamma_{\text{noise}}} + (\lambda^{*})^2\lVert Hv\rVert^2_{\Gamma_{\text{noise}}} - 2 \langle\lambda^{*}Hv, \bm{d} \rangle_{\Gamma_{\text{noise}}} \bigg)+\text{Const}},
\end{align*}
where
\begin{align*}
 \lambda^{*} = \mathbb{E}^{\nu^{\lambda}}[\lambda] = \int_{\mathbb{R}}\lambda\nu^{\lambda}(d\lambda), \quad
 \mathcal{C}_{\lambda} = \mathbb{E}^{\nu^{\lambda}}[\lambda-\lambda^{*}]^2 = 
 \int_{\mathbb{R}} (\lambda-\lambda^{*})^2\nu^{\lambda}(d\lambda).
\end{align*}
We derive that
\begin{align*}
 {\frac{d\nu^v}{d\nu^v_0} \varpropto \exp \bigg(-\frac{1}{2} \bigg(\mathcal{C}_{\lambda} \lVert Hv\rVert^2_{\Gamma_{\text{noise}}} + (\lambda^{*})^2\lVert Hv\rVert^2_{\Gamma_{\text{noise}}} - 2\langle\lambda^{*}Hv, \bm{d}\rangle_{\Gamma_{\text{noise}}} \bigg) \bigg)}.
\end{align*}
On the basis of Bayes' formula, the probability measure $\nu^v = \mathcal{N}(v^{*}, \mathcal{C}_v)$ is Gaussian with
\begin{align}\label{post:v}
 { \mathcal{C}^{-1}_v=(\mathcal{C}_{\lambda}+(\lambda^{*})^2 )H^{*}\Gamma^{-1}_{\text{noise}}H+\mathcal{C}^{-1}_0, \quad
 v^{*}=\mathcal{C}_{v}(\lambda^{*}H^{*}\Gamma^{-1}_{\text{noise}}\bm{d})}.
\end{align}

\textbf{Calculate $\Phi_{\lambda}(\lambda)$}: 
Similarly, we have
\begin{align*}
 \Phi_{\lambda}(\lambda) = &\int_{\mathcal{H}_u}\frac{1}{2} \bigg( \lVert \bm{d}-H\lambda v\rVert ^2_{\bm{\Gamma}_{\text{noise}}} \bigg ) \nu^{v}(dv)+\text{Const} \\
 = &{\frac{\tau}{2}\int_{\mathcal{H}_u} \lVert \bm{d}-\lambda Hv^{*}\rVert ^2_{\Gamma_{\text{noise}}} + \lVert H(v-v^{*}) \rVert ^2_{\Gamma_{\text{noise}}} \lambda^2} \\
 &+ { 2\lambda \langle \bm{d}-\lambda Hv^{*},H(v-v^{*}) \rangle_{\Gamma_{\text{noise}}} \nu^vdv+\text{Const}}.
\end{align*}

Then we derive that
{
\begin{align}\label{eq:tr1}\nonumber
	\int_{\mathcal{H}_u} \lVert H(v-v^{*})\rVert^{2}_{\Gamma_{\text{noise}}}\nu^{v}(dv) 
	= &\int_{\mathcal{H}_u} \bigg \langle H(v-v^{*}), H(v-v^{*}) \bigg \rangle_{\Gamma_{\text{noise}}}\nu^{v}(dv) \nonumber \\
	= &\int_{\mathcal{H}_u} \bigg \langle H\sum^{\infty}_{k=1}(v_k - v^{*}_k)\phi_k, 
	H\sum^{\infty}_{j=1}(v_j - v^{*}_j)\phi_j \bigg \rangle_{\Gamma_{\text{noise}}} \nu^{v}(dv) \nonumber\\
	= &\sum^{\infty}_{k=1}\sum^{\infty}_{j=1}\int_{\mathcal{H}_u} \bigg \langle H(v_k - v^{*}_k)\phi_k, H(v_j - v^{*}_j)\phi_j \bigg \rangle_{\Gamma_{\text{noise}}} \nu^{v}(dv) \\
	= &\sum^{\infty}_{k=1}\sum^{\infty}_{j=1} \int_{\mathcal{H}_u}  (v_k - v^{*}_k)(v_j - v^{*}_j) \langle H^{*}H\phi_k, \phi_j \rangle_{\Gamma_{\text{noise}}}  \nu^{v}(dv) \nonumber.
\end{align}
}
Using the first conclusion of Lemma $\ref{lemma1}$, we have
{
\begin{align*}
	\sum^{\infty}_{k=1}\sum^{\infty}_{j=1}\langle H^{*}H\phi_k, \phi_j \rangle_{\Gamma_{\text{noise}}} 
	= \tau\sum^{\infty}_{k=1}\sum^{\infty}_{j=1}\langle \xi_k\phi_k, \phi_j \rangle 
	= \tau\sum^{\infty}_{k=1}\langle \xi_k\phi_k, \phi_k \rangle
	= \sum^{\infty}_{k=1}\tau\xi_k.
\end{align*}
}
Then equation $(\ref{eq:tr1})$ turns to
{
\begin{align*}
	\int_{\mathcal{H}_u} \lVert H(v-v^{*})\rVert^{2}_{\Gamma_{\text{noise}}}\nu^{v}(dv) 
	= &\sum^{\infty}_{k=1} \int_{\mathcal{H}_u}  \tau\xi_k(v_k - v^{*}_k)(v_k - v^{*}_k) \nu^{v}(dv)\\
	= &\sum^{\infty}_{k=1} \langle \tau\xi_k\phi_k, \mathcal{C}_v\phi_k \rangle = \sum^{\infty}_{k=1} \langle H^{*}\Gamma^{-1}_{\text{noise}}H\phi_k, \mathcal{C}_v\phi_k \rangle \\
	= &\sum^{\infty}_{k=1} \langle \mathcal{C}_vH^{*}\Gamma^{-1}_{\text{noise}}H\phi_k, \phi_k \rangle = \text{Tr}(\mathcal{C}_{v}H^{*}\Gamma^{-1}_{\text{noise}}H).
\end{align*}
}
In the last line, we use the property that operator $\mathcal{C}_v$ is self-adjoint.
According to Theorem VI.3 \cite{reed2012methods}, since $\mathcal{C}^{-1}_v$ is self-adjoint, we know that $\mathcal{C}_v$ is also self-adjoint.
Then the function $\Phi_{\lambda}(\lambda)$ can be written as 
{
\begin{align*}
 \Phi_{\lambda}(\lambda) = \frac{1}{2}\lVert \bm{d}-\lambda Hv^{*}\rVert^2_{\Gamma_{\text{noise}}}+\frac{1}{2}\text{Tr}(\mathcal{C}_vH^{*}\Gamma^{-1}_{\text{noise}}H)\lambda^2 + \text{Const},
\end{align*}
where
\begin{align*}
 v^{*} = \mathbb{E}^{\nu^{v}}[v] = \int_{\mathcal{H}_u}v\nu^{v}(dv), \quad
 \mathcal{C}_{v} = \int_{\mathcal{H}_u} (v - v^{*})\otimes (v - v^{*}) \nu^{v}(dv).
\end{align*}
This implies
\begin{align*}
 \frac{d\nu^{\lambda}}{d\nu^{\lambda}_0} \varpropto \exp \bigg (-\frac{1}{2} \bigg(\text{Tr}(\mathcal{C}_vH^{*}\Gamma^{-1}_{\text{noise}}H)+\lVert Hv^{*}\rVert^{2}_{\Gamma_{\text{noise}}} \bigg)\lambda^2{ + }\lambda v^{*}H^{*}\Gamma^{-1}_{\text{noise}}\bm{d} \bigg).
\end{align*}
Therefore, $\nu^{\lambda} = \mathcal{N}(\lambda^{*}, \mathcal{C}_{\lambda})$ is a Gaussian measure with
\begin{align}\label{post:lam}
 \mathcal{C}^{-1}_{\lambda}=\bigg (\text{Tr}(\mathcal{C}_vH^{*}\Gamma^{-1}_{\text{noise}}H)+\lVert Hv^{*}\rVert^{2}_{\Gamma_{\text{noise}}} \bigg )+\frac{1}{\sigma}, \quad
 \lambda^{*}=\mathcal{C}_{\lambda}\bigg ( v^{*}H^{*}\Gamma^{-1}_{\text{noise}}\bm{d}+\frac{1}{\sigma}\bar{\lambda}\bigg ).
\end{align}
}

We notice that the key point of calculating the posterior of $\lambda$ is to calculate {$\text{Tr}(\mathcal{C}_vH^{*}\Gamma^{-1}_{\text{noise}}H)$} efficiently.
Based on \cite{bui2012analysis}, we propose a theorem to transform the original form of {$\text{Tr}(\mathcal{C}_vH^{*}\Gamma^{-1}_{\text{noise}}H)$}, which is
{
\begin{align}\label{eq:tr}
	\begin{split}
		\text{Tr} \bigg(\mathcal{C}_vH^{*}\Gamma^{-1}_{\text{noise}}H \bigg) &= \text{Tr} \bigg( \bigg((\mathcal{C}_{\lambda}+(\lambda^{*})^2)H^{*}\Gamma^{-1}_{\text{noise}}H+\mathcal{C}^{-1}_0 \bigg)^{-1}H^{*}\Gamma^{-1}_{\text{noise}}H \bigg) \\
		&= \text{Tr} \bigg(\mathcal{C}^{1/2}_0 \bigg((\mathcal{C}_{\lambda}+(\lambda^{*})^2)\mathcal{C}^{1/2}_0H^{*}\Gamma^{-1}_{\text{noise}}H\mathcal{C}^{1/2}_0+\text{Id} \bigg)^{-1}\mathcal{C}^{1/2}_0H^{*}\Gamma^{-1}_{\text{noise}}H \bigg),
	\end{split}
\end{align}
}into a form that can be calculated conveniently.
And we shall discuss the advantage of the form stated in the following theorem more precisely in Subsection $\ref{subsec2.6}$.

\begin{theorem}\label{the:caltr}
	Let the operator $H^{*}H$ be a Hilbert-Schmidt operator defined on the Hilbert space $\mathcal{H}_u$, then we have
	{
	\begin{align*}
		\text{Tr} (\mathcal{C}_vH^{*}\Gamma^{-1}_{\text{noise}}H) = \text{Tr} \bigg( \bigg((\mathcal{C}_{\lambda}+(\lambda^{*})^2)\mathcal{C}^{1/2}_0H^{*}\Gamma^{-1}_{\text{noise}}H\mathcal{C}^{1/2}_0+\text{Id} \bigg)^{-1}\mathcal{C}^{1/2}_0H^{*}\Gamma^{-1}_{\text{noise}}H\mathcal{C}^{1/2}_0 \bigg) < \infty.
	\end{align*}
	}
\end{theorem}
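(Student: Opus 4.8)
The plan is to show that the operator $\mathcal{C}_v H^{*}\Gamma^{-1}_{\text{noise}}H$ is trace-class, then justify the cyclic rearrangement of the trace that turns the first expression in \eqref{eq:tr} into the final symmetric form, and finally confirm the finiteness of that symmetric expression. First I would introduce the shorthand $\mathcal{B} := \mathcal{C}^{1/2}_0 H^{*}\Gamma^{-1}_{\text{noise}}H\mathcal{C}^{1/2}_0$ and $c := \mathcal{C}_\lambda + (\lambda^{*})^2 > 0$. Using Lemma \ref{lemma1}, $H^{*}H$ is a Hilbert-Schmidt operator, hence $H^{*}\Gamma^{-1}_{\text{noise}}H = \tau H^{*}H$ is Hilbert-Schmidt, and since $\mathcal{C}^{1/2}_0$ is bounded (indeed $\mathcal{C}_0$ is trace-class, so $\mathcal{C}^{1/2}_0$ is Hilbert-Schmidt), $\mathcal{B}$ is a self-adjoint, nonnegative, trace-class operator with $\|\mathcal{B}\|_{\text{Tr}} \le \|\mathcal{C}^{1/2}_0\|_{\text{HS}}^2 \|H^{*}\Gamma^{-1}_{\text{noise}}H\|_{\text{Op}} < \infty$.

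Next I would record the operator identity behind \eqref{eq:tr}. From \eqref{post:v} we have $\mathcal{C}_v^{-1} = c\, H^{*}\Gamma^{-1}_{\text{noise}}H + \mathcal{C}_0^{-1}$, so factoring $\mathcal{C}_0^{-1/2}$ on both sides gives $\mathcal{C}_v^{-1} = \mathcal{C}_0^{-1/2}(c\,\mathcal{B} + \text{Id})\mathcal{C}_0^{-1/2}$, whence $\mathcal{C}_v = \mathcal{C}_0^{1/2}(c\,\mathcal{B}+\text{Id})^{-1}\mathcal{C}_0^{1/2}$; here $(c\,\mathcal{B}+\text{Id})^{-1}$ is a bounded operator with norm $\le 1$ because $\mathcal{B}\ge 0$. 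Therefore
\begin{align*}
	\text{Tr}\bigl(\mathcal{C}_v H^{*}\Gamma^{-1}_{\text{noise}}H\bigr) = \text{Tr}\Bigl(\mathcal{C}_0^{1/2}(c\,\mathcal{B}+\text{Id})^{-1}\mathcal{C}_0^{1/2}H^{*}\Gamma^{-1}_{\text{noise}}H\Bigr),
\end{align*}
which is exactly the second line of \eqref{eq:tr}. Now I would apply the cyclic property of the trace, $\text{Tr}(AB) = \text{Tr}(BA)$, valid whenever one factor is trace-class and the other is bounded (see e.g. Theorem VI.25 in \cite{reed2012methods}); moving the rightmost $\mathcal{C}_0^{1/2}$ to the front and regrouping yields
\begin{align*}
	\text{Tr}\bigl(\mathcal{C}_v H^{*}\Gamma^{-1}_{\text{noise}}H\bigr) = \text{Tr}\Bigl((c\,\mathcal{B}+\text{Id})^{-1}\,\mathcal{C}_0^{1/2}H^{*}\Gamma^{-1}_{\text{noise}}H\mathcal{C}_0^{1/2}\Bigr) = \text{Tr}\bigl((c\,\mathcal{B}+\text{Id})^{-1}\mathcal{B}\bigr),
\end{align*}
which is the claimed formula. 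Finiteness is then immediate: $(c\,\mathcal{B}+\text{Id})^{-1}$ is bounded with operator norm $\le 1$ and $\mathcal{B}$ is trace-class, so the product is trace-class with $\text{Tr}\bigl((c\,\mathcal{B}+\text{Id})^{-1}\mathcal{B}\bigr) \le \|\mathcal{B}\|_{\text{Tr}} < \infty$; nonnegativity of the trace also follows since $(c\,\mathcal{B}+\text{Id})^{-1}$ and $\mathcal{B}$ are commuting nonnegative self-adjoint operators.

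The only subtle point — and the step I would be most careful about — is the legitimacy of the cyclic permutation: one must check that at each swap at least one of the two grouped factors is trace-class (or that one is Hilbert-Schmidt and the other Hilbert-Schmidt), rather than blindly invoking $\text{Tr}(AB)=\text{Tr}(BA)$ for arbitrary bounded operators, which is false. Writing $\mathcal{C}_0^{1/2}H^{*}\Gamma^{-1}_{\text{noise}}H$ as the product of the Hilbert-Schmidt operators $\mathcal{C}_0^{1/2}$ and $H^{*}\Gamma^{-1}_{\text{noise}}H$ shows it is trace-class, and $(c\,\mathcal{B}+\text{Id})^{-1}$ together with $\mathcal{C}_0^{1/2}$ are bounded, so every intermediate expression is a trace-class operator times a bounded one and the rearrangement is justified. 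This also retroactively validates the manipulation already displayed in \eqref{eq:tr}. I would close by remarking that the final symmetric form is the computationally useful one because $\mathcal{B}=\mathcal{C}_0^{1/2}H^{*}\Gamma^{-1}_{\text{noise}}H\mathcal{C}_0^{1/2}$ is a self-adjoint nonnegative trace-class operator whose dominant eigenvalues can be captured by a low-rank approximation, as elaborated in Subsection \ref{subsec2.6}.
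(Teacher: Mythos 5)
Your proof is correct and follows essentially the same route as the paper's: both reduce the claim to the cyclic property of the trace, $\text{Tr}(AB)=\text{Tr}(BA)$ with one factor trace-class (the product of the Hilbert--Schmidt operators $\mathcal{C}^{1/2}_0$ and $H^{*}\Gamma^{-1}_{\text{noise}}H$) and the other bounded, invoking Theorem VI.25 of \cite{reed2012methods}, and both obtain finiteness from $\lvert\text{Tr}(AB)\rvert\le\lVert A\rVert_{\text{Op}}\lVert B\rVert_{\text{Tr}}$. The one place you improve on the paper is the boundedness of the resolvent $(c\,\mathcal{B}+\mathrm{Id})^{-1}$: your observation that $c\,\mathcal{B}+\mathrm{Id}\ge \mathrm{Id}$ for a nonnegative self-adjoint trace-class $\mathcal{B}$ yields $\lVert (c\,\mathcal{B}+\mathrm{Id})^{-1}\rVert_{\text{Op}}\le 1$ in one line, whereas the paper runs a longer injectivity/closed-range/inverse-mapping-theorem argument to reach the same conclusion.
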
 

\begin{proof}
	Let us denote
	{
	\begin{align*}
		\mathcal{A} := \bigg((\mathcal{C}_{\lambda}+(\lambda^{*})^2)\mathcal{C}^{1/2}_0H^{*}\Gamma^{-1}_{\text{noise}}H\mathcal{C}^{1/2}_0+\text{Id} \bigg)^{-1}\negmedspace\mathcal{C}^{1/2}_0H^{*}\Gamma^{-1}_{\text{noise}}H,
		\quad \mathcal{B} := \mathcal{C}^{1/2}_0,
	\end{align*}
	}
	then we have
	\begin{align*}
		{\text{Tr} (\mathcal{C}_vH^{*}\Gamma^{-1}_{\text{noise}}H )} = \text{Tr} (\mathcal{B}\mathcal{A}).
	\end{align*}
	We aim to prove $\text{Tr} (\mathcal{B}\mathcal{A}) = \text{Tr} (\mathcal{A}\mathcal{B})$, that is
	{
		\begin{align*}
			\text{Tr} (\mathcal{C}_vH^{*}\Gamma^{-1}_{\text{noise}}H) = \text{Tr} \bigg( \bigg((\mathcal{C}_{\lambda}+(\lambda^{*})^2)\mathcal{C}^{1/2}_0H^{*}\Gamma^{-1}_{\text{noise}}H\mathcal{C}^{1/2}_0+\text{Id} \bigg)^{-1}\mathcal{C}^{1/2}_0H^{*}\Gamma^{-1}_{\text{noise}}H\mathcal{C}^{1/2}_0 \bigg).
	\end{align*}
	}
	According to Theorem VI.25(c) in \cite{reed2012methods}, we need to demonstrate that the operator $\mathcal{B}$ is bounded and $\mathcal{A}$ is of trace-class. 
	Since the operator $\mathcal{C}_0$ is assumed to be trace-class, we know that $\mathcal{B} = \mathcal{C}^{1/2}_0$ is a Hilbert-Schmidt operator.
	This demonstrates that operator $\mathcal{B}$ is bounded. 
	
	Our task now is to prove that $\mathcal{A}$ is of trace-class.
	Setting
	{
	\begin{align*}
		\mathcal{P} = \mathcal{C}^{1/2}_0H^{*}\Gamma^{-1}_{\text{noise}}H, \quad
		\mathcal{Q} = \bigg((\mathcal{C}_{\lambda}+(\lambda^{*})^2)\mathcal{C}^{1/2}_0H^{*}\Gamma^{-1}_{\text{noise}}H\mathcal{C}^{1/2}_0+\text{Id} \bigg)^{-1},
	\end{align*}
	}
	we know that $\mathcal{A} = \mathcal{Q}\mathcal{P}$.
	According to Theorem VI.19(b) in \cite{reed2012methods}, we need to prove { $\mathcal{P}= \mathcal{C}^{1/2}_0H^{*}\Gamma^{-1}_{\text{noise}}H$} is of trace-class, i.e., $\lVert \mathcal{P} \rVert_{\text{Tr}} < \infty $ ,and $\mathcal{Q}$ is a bounded linear operator.
	Noticing the property of $\mathcal{C}^{1/2}_0$, $\lVert \mathcal{C}^{1/2}_0\rVert_{\text{HS}} < \infty$ is clear. 
	Since $ H^{*}H$ is a Hilbert-Schmidt operator, we have
	{
	\begin{align}
		\lVert \mathcal{C}^{1/2}_0H^{*}\Gamma^{-1}_{\text{noise}}H\rVert_{\text{Tr}} \leqslant \tau\lVert \mathcal{C}^{1/2}_0\rVert_{\text{HS}}\lVert H^{*}H\rVert_{\text{HS}} < \infty,
	\end{align}
	}
	where we used the conclusion stated in Chapter VI \cite{reed2012methods}.
	This claims that { $\mathcal{P}= \mathcal{C}^{1/2}_0H^{*}\Gamma^{-1}_{\text{noise}}H$} is a trace-class operator. 
	
	Then we need to illustrate that the operator $\mathcal{Q}$ is bounded.
	Without loss of generality, we first show that $\mathcal{Q}^{-1}$ is injective:
	\begin{align*}
		\langle \mathcal{Q}^{-1}u, u \rangle_{\mathcal{H}_u} &= { \bigg \langle((\mathcal{C}_{\lambda}+(\lambda^{*})^2)\mathcal{C}^{1/2}_0H^{*}\Gamma^{-1}_{\text{noise}}H\mathcal{C}^{1/2}_0+\text{Id})u, u \bigg \rangle_{\mathcal{H}_u}} \\
		&= \tau(\mathcal{C}_{\lambda}+(\lambda^{*})^2) \langle H\mathcal{C}^{1/2}_0u, H\mathcal{C}^{1/2}_0u \rangle_{\mathcal{H}_u} + \langle u, u \rangle_{\mathcal{H}_u} \\
		&\geqslant \lVert u \rVert^{2}_{\mathcal{H}_u}.
	\end{align*}
	Hence we have $0 = \langle \mathcal{Q}^{-1}u, u\rangle_{\mathcal{H}_u} \geqslant \lVert u \rVert^{2}_{\mathcal{H}_u} \geqslant 0$, which indicates that the operator $\mathcal{Q}^{-1}$ is injective.
	To illustrate that $\mathcal{Q}^{-1}$ is surjective, we need to show that $\text{Ran}(\mathcal{Q}^{-1})$ is closed, and $\text{Ran}(\mathcal{Q}^{-1})^{\perp} = 0$.
	For all $y \in \overline{\text{Ran}(\mathcal{Q}^{-1})}$, there exists $\lbrace u_k \rbrace^{\infty}_{k=1} \subset \mathcal{H}_u$, such that $y = \lim_{k \rightarrow \infty}\mathcal{Q}^{-1}u_k$.
	Since
	\begin{align*}
		\lim_{k \rightarrow \infty}\lVert u_k - u_{k-1} \rVert^{2}_{\mathcal{H}_u} &\leq \lim_{k \rightarrow \infty}\langle \mathcal{Q}^{-1}(u_k - u_{k-1}), (u_k - u_{k-1}) \rangle_{\mathcal{H}_u} \\
		&\leq \lim_{k \rightarrow \infty}\lVert u_k - u_{k-1} \rVert^{2}_{\mathcal{H}_u}\lVert \mathcal{Q}^{-1}(u_k - u_{k-1}) \rVert^2_{\mathcal{H}_u} \\
		&= 0,
	\end{align*}
	we know that $\lbrace u_k \rbrace^{\infty}_{k=1}$ is a Cauchy sequence, that is, $\text{Ran}(\mathcal{Q}^{-1})$ is closed.
	For any $y \in \overline{\text{Ran}(\mathcal{Q}^{-1})}^{\perp}$, we assume that $\langle y, \mathcal{Q}^{-1}u \rangle = 0$, for all $u \in \mathcal{H}_u$.
	Let $y = u$, we obtain
	\begin{align*}
		\lVert u \rVert^{2}_{\mathcal{H}_u} \leq \langle \mathcal{Q}^{-1}u, u \rangle_{\mathcal{H}_u} = 0,
	\end{align*}
	which indicates $\overline{\text{Ran}(\mathcal{Q}^{-1})}^{\perp} = 0$.
	Then we derive that $\mathcal{Q}^{-1}$ is surjective.
	Since $\mathcal{Q}^{-1}$ is bounded, according to the inverse mapping theorem (Theorem III.11 in \cite{reed2012methods}), operator $\mathcal{Q}$ is a bounded linear operator defined on $\mathcal{H}_u$.
	That is, operator $\mathcal{A}$ is of trace-class.
	Here we complete the proof.
\end{proof}

It is worth mentioning that the infinite-dimensional formulation provides a general framework for conducting appropriate discretizations. 
Combining Theorem $\ref{the:posterior}$ and the discussion in this section, we can provide an iterative algorithm to calculate the posterior measures explicitly, see Algorithm $\ref{alg A}$.
In order to keep dimensional independence, the discretization of Algorithm $\ref{alg A}$ should be done carefully, e.g., the adjoint operator $H^{*}$ is usually not trivially equal to the transpose of the discrete approximation of $H$, see \cite{bui2012analysis, jia2021stein}.

\begin{algorithm}
\caption{The NCP-iMFVI algorithm}
\label{alg A}
\begin{algorithmic}[1]
\STATE{Initialize $\lambda_0 = \bar{\lambda}$, specify the tolerance $tol$, and set $k=1$;}
\REPEAT
\STATE{{ Calculate estimated posterior measure $\nu^v_k = \mathcal{N}(v^{*}_k, \mathcal{C}_{v_k})$, \\
where $\mathcal{C}^{-1}_{v_k} =  (\mathcal{C}^{k-1}_{\lambda}+\lambda^2_{k-1})H^{*}\Gamma^{-1}_{\text{noise}}H + \mathcal{C}^{-1}_{0}, \quad v^{*}_k = \mathcal{C}_{v_k}\lambda_{k-1}H^{*}\Gamma^{-1}_{\text{noise}}d$;}}
\STATE{{ Calculate $\text{Tr}(\mathcal{C}_{v_k}H^{*}\Gamma^{-1}_{\text{noise}}H)$;}}
\STATE{{Calculate estimated posterior measure $\nu^{\lambda}_k = \mathcal{N}(\lambda^{*}_k, \mathcal{C}_{\lambda_k})$, \\
where $\mathcal{C}^{-1}_{\lambda_k} = \bigg(\text{Tr}(\mathcal{C}_{v_k}H^{*}\Gamma^{-1}_{\text{noise}}H)+\lVert Hv_k\rVert^2_{\Gamma_{\text{noise}}} \bigg)+\frac{1}{\sigma}, 
\quad \lambda^{*}_k=\mathcal{C}_{\lambda_k}(d^{T}\Gamma^{-1}_{\text{noise}}Hv_k + \frac{\bar{\lambda}}{\sigma}$);}}
\UNTIL{$\max(\lVert \lambda_kv_k - \lambda_{k-1}v_{k-1} \rVert / \lVert \lambda_{k}v_{k}\rVert, 
	\lVert \lambda_{k} - \lambda_{k-1} \rVert / \lVert \lambda_{k-1}\rVert) \leqslant tol $;}
\STATE{Return the approximate probability measure $\nu(dv, d\lambda) = \nu^v_k(dv)\nu^{\lambda}_k(d\lambda)$.}
\end{algorithmic}
\end{algorithm}

\subsection{Some numerical details}\label{subsec2.6}
According to Theorem $\ref{the:caltr}$, we have a form of {$\text{Tr} (\mathcal{C}_vH^{*}\Gamma^{-1}_{\text{noise}}H)$}, which can be calculated more conveniently.
Based on \cite{bui2013computational}, with the help of a low rank approximation of operators, we provide an efficient method for calculating the trace of {$\mathcal{C}_vH^{*}\Gamma^{-1}_{\text{noise}}H$}.

We consider a finite-dimensional subspace $V_h$ of $L^2(\Omega)$ originating from a finite-element discretization with continuous Lagrange basis functions $\lbrace \phi_j \rbrace^{n}_{j=1}$, which correspond to the nodal points $\lbrace \bm{x}_j \rbrace^{n}_{j=1}$, such that
\begin{align*}
	\phi_j(\bm{x}_i) = \delta_{ij}, \quad \text{for} \ i, j \in \lbrace 1, \cdots, n\rbrace,
\end{align*}
where $\Omega$ is the domain of some specific problem.
For all $m \in L^2(\Omega)$, we define the approximation $m_h = \sum^{n}_{j=1}m_j\phi_j \in V_h$.
Then, we introduce the mass matrix $\bm{M}$ similar to \cite{bui2012analysis}.
For any $m_1, m_2 \in L^2(\Omega)$, observe that $\langle m_1, m_2 \rangle_{L^2(\Omega)} \approx \langle m_{1h}, m_{2h} \rangle = \langle \bm{m}_1, \bm{m}_2 \rangle_{\bm{M}} := \bm{m}^T_1\bm{M}\bm{m}_2$, where $\bm{M}$ is defined by
\begin{align*}
	\bm{M}_{ij} = \int_{\Omega}\phi_i(\bm{x})\phi_j(\bm{x})d\bm{x}.
\end{align*}
For simplicity of notation, we shall use the boldface symbol to denote the matrix representation of the operators.

Define ${\rho := \mathcal{C}_{\lambda}+(\lambda^{*})^2}$ is a constant.
Based on the Theorem $\ref{the:caltr}$, setting
\begin{align*}
 { \bm{\mathcal{G}} := \bm{\mathcal{C}}^{1/2}_0\bm{H^{*}\Gamma^{-1}_{\text{noise}}H\mathcal{C}}^{1/2}_0},
\end{align*}
we have
{
\begin{align*}
 \text{Tr} \bigg(\bm{\mathcal{C}_vH^{*}\Gamma^{-1}_{\text{noise}}H} \bigg) = \text{Tr} \bigg( (\rho\bm{\mathcal{G}}+\bm{I})^{-1}\bm{\mathcal{G}} \bigg).
\end{align*}
}
Let $\lbrace \xi_i, \bm{v}_i\rbrace^{n}_{i=1}$ be the eigenpairs of $\bm{\mathcal{G}}$, and $\bm{\Xi}=\diag(\xi_1, \cdots, \xi_n) \in \mathbb{R}^{n\times n}$, then denote $\bm{V} \in \mathbb{R}^{n\times n}$ such that its columns are the eigenvectors $\bm{v}_i$ of $\bm{\mathcal{G}}$. 
Now we replace $\bm{\mathcal{G}}$ by its spectral decomposition
\begin{align}\label{eq:tr calbm}
 \text{Tr} \bigg(\bm{\mathcal{C}_vH^{*}H} \bigg) = \text{Tr} \bigg((\rho \bm{V\Xi V^{\diamondsuit}+I)}^{-1}\bm{V\Xi V^{\diamondsuit}} \bigg),
\end{align}
where $\bm{V}^{\diamondsuit}$ is the adjoint of $\bm{V}$ defined as $\bm{V}^{\diamondsuit} = \bm{V}^T\bm{M}$, see \cite{bui2013computational}, and $\bm{M}$ is the mass matrix.

Since the eigenvalues of $\bm{\mathcal{G}}$ decay rapidly for many practical inverse problems \cite{bui2013computational}, it is considerable to construct a low-rank approximation of $\bm{\mathcal{G}}$ by computing the $r$ largest eigenvalues
\begin{align*}
 \bm{\mathcal{G}} = \bm{V_r\Xi_r V^{\diamondsuit}_r} + \mathcal{O} \bigg(\sum^{n}_{i=r+1}\xi_i \bigg),
\end{align*}
where $\bm{V_r} \in \mathbb{R}^{n\times r}$ contains $r$ eigenvectors of $\bm{\mathcal{G}}$ corresponding to the $r$ largest eigenvalues, and $\bm{\Xi_r} = \diag(\xi_1, \cdots, \xi_r) \in \mathbb{R}^{r\times r}$. We can use the Sherman-Morrison-Woodbury formula to simplify ($\ref{eq:tr calbm}$), then we have
\begin{align*}
 \bigg(\rho \bm{V\Xi V^{\diamondsuit}+I} \bigg)^{-1} = \bm{I - V_rD_rV^{\diamondsuit}_r} + \mathcal{O} \bigg(\sum^{n}_{i=r+1}\frac{\rho\xi_i}{\rho\xi_i+1} \bigg),
\end{align*}
where 
\begin{align*}
 \bm{D_r} := \diag(d_1, \cdots, d_r) = \diag(\rho\xi_1/(\rho\xi_1+1), \cdots, \rho\xi_r/(\rho\xi_r+1)).
\end{align*}
Thus, we have
\begin{align}
\begin{split}
 \text{Tr} \bigg(\bm{\mathcal{C}_vH^{*}H} \bigg) &= \frac{1}{\rho}\text{Tr} \bigg((\rho \bm{V\Xi V^{\diamondsuit} + I)}^{-1}\rho \bm{V\Xi V^{\diamondsuit}} \bigg) \\
 &= \frac{1}{\rho}\text{Tr} \bigg((\rho \bm{V\Xi V^{\diamondsuit} + I)}^{-1}(\rho \bm{V\Xi V^{\diamondsuit} + I - I)} \bigg) \\
 &= \frac{1}{\rho}\text{Tr} \bigg(\bm{I} - (\rho \bm{V\Xi V^{\diamondsuit} + I)}^{-1} \bigg) \\ 
 &= \frac{1}{\rho}\text{Tr} \bigg(\bm{I - (I - V_rD_rV^{\diamondsuit}_r} 
 + \mathcal{O} (\sum^{n}_{i=r+1}\frac{\rho\xi_i}{\rho\xi_i+1})) \bigg) \\
 &= \frac{1}{\rho}\text{Tr} \bigg(\bm{V_rD_rV^{\diamondsuit}_r} \bigg)
 - {\frac{1}{\rho}}\mathcal{O} \bigg(\sum^{n}_{i=r+1}\frac{{\rho}\xi_i}{\rho\xi_i+1} \bigg).
\end{split}
\end{align}

In order to obtain an accurate approximation of $\text{Tr} (\bm{\mathcal{C}_vH^{*}H})$, following the strategy in \cite{bui2013computational}, we have the following strategy:
{
The tail terms
\begin{align*}
	\sum^{n}_{i=r+1}\frac{\rho\xi_i}{\rho\xi_i+1}
\end{align*}
can be neglected when
\begin{align*}
	\rho\xi_i< 1,
\end{align*}
}

Removing tail terms, we have
\begin{align}
 \text{Tr} \bigg(\bm{\mathcal{C}_vH^{*}H} \bigg) \approx \frac{1}{\rho}\sum^{r}_{i=1}d_i = \sum^{r}_{i=1}\frac{\xi_i}{\rho\xi_i+1}.
\end{align}

\begin{remark}
	\itshape
		Usually, it is {computationally expensive} to calculate the eigenvalues directly if the eigenvalue problem {$\bm{H^{*}\Gamma^{-1}_{\text{noise}}Hv}_i = \tau\xi_i \bm{\mathcal{C}}^{-1}_0\bm{v}_i$} is of large-scale. 
		Matrix {$\bm{H^{*}\Gamma^{-1}_{\text{noise}}H} \in \mathbb{R}^{n \times n}$} is symmetric and positive-defined, $\bm{\mathcal{C}}^{-1}_0 \in \mathbb{R}^{n \times n}$ is symmetric positive defined, and $\lbrace \xi_i, \bm{v}_i \rbrace^{n}_{i=1}$ is the eigensystem.
		Since matrix { $\bm{H^{*}\Gamma^{-1}_{\text{noise}}H}$} is a large dense matrix, it is hardly to store such a matrix in our computational procedure.
		In order to calculate the eigenvalues, we shall employ a randomized SVD algorithm called double pass algorithm \cite{saibaba2016randomized} in which { $\bm{H^{*}\Gamma^{-1}_{\text{noise}}H}$} only operates on a random vector to save the computational resource without giving the explicit form of { $\bm{H^{*}\Gamma^{-1}_{\text{noise}}H}$}.
		This method will significantly reduce the computational cost required for eigenvalue decomposition.
		Under this circumstance, the computational procedure only requires $\mathcal{O}(k)$ matrix vector product, where $k$ denotes the number of eigenvalues we need to calculate.
\end{remark}

\section{Numerical Examples}\label{sec3}
We now present numerical simulations supporting the results in Section $\ref{sec2}$. 
Here we apply the general theory to three typical inverse problems, including two linear and one non-linear problems. 
In Subsection $\ref{subsec3.1}$, we consider a simple smooth model with Gaussian noise; meanwhile, we compare the computational results with the classical sampling method, such as the {Gibbs sampling method}. 
In Subsection $\ref{subsec3.2}$, we solve a large-scale inverse source problem of Helmholtz equation.
In Subsection $\ref{subsec3.3}$, we work with a non-linear inverse problem of Darcy-flow equation. 

\subsection{The simple elliptic equation with Gaussian noise}\label{subsec3.1}
\subsubsection{Basic settings}\label{subsec3.1.1}
Consider an inverse source problem of the elliptic equation
\begin{align}\label{prob1}
\begin{split}
 -\alpha \Delta w + w &= u \quad \text{in}\ \Omega, \\ 
 w &= 0 \quad \text{on}\ \partial \Omega,
\end{split}
\end{align}
where $\Omega = (0, 1) \subset \mathbb{R}$, $\alpha > 0$ is a positive constant. 
The forward operator is defined as follows:
\begin{align}
 Hu = (w(x_1), w(x_2), \cdots, w(x_{N_d}))^T,
\end{align}
where $u \in \mathcal{H}_u := L^2(\Omega)$, $w$ denotes the solution of $(\ref{prob1})$, and $x_i \in \Omega$ for $i = 1, \cdots, N_d$.
With these notations, the problem can be written abstractly as:
\begin{align}
 \bm{d} = Hu + \bm{\epsilon},
\end{align}
where $\bm{\epsilon} \sim \mathcal{N}(0, \bm{\Gamma}_{\text{noise}})$ is the random Gaussian noise.
In our implementations, the measurement points $\lbrace x_i \rbrace^{N_d}_{i=1}$ are taken at the coordinates $\lbrace i/20\rbrace^{20}_{i = 1}$. 
To avoid the inverse crime \cite{kaipio2006statistical}, we discretize the elliptic equation by the finite element method on a regular mesh (the grid points are uniformly distributed on the domain $\Omega$) with the number of grid points being equal to $10^4$. 
In our experiments, the prior measure of $u$ is a Gaussian probability measure $\mu^{u, \lambda}_0$ with mean zero and covariance $\lambda^2\mathcal{C}_0$. 
Without particular mention, the prior measure of the hyper-parameter $\lambda$ is a one-dimensional Gaussian probability measure $\mu^{\lambda}_0$, with { mean $\bar{\lambda} = 1$, and variance $\sigma = 10000$}.

For clarity, we list the specific choices for some parameters introduced in this subsection as follows: 
\begin{itemize}
	\item {Assume that 5$\%$ random Gaussian noise $\epsilon \sim \mathcal{N}(0, \bm{\Gamma}_{\text{noise}})$ is added, where $\bm{\Gamma}_{\text{noise}} = \tau^{-1}\textbf{I}$, and $\tau^{-1} = (0.05\max(\lvert Hu\rvert))^2$.}
	\item Let domain $\Omega$ be an interval $(0, 1)$ with $\partial \Omega = \lbrace 0, 1 \rbrace$. And the available data are assumed to be $\lbrace w(x_i) | i = 1, 2, \cdots, 20 \rbrace$.
	\item We assume that the data produced from the underlying true signal $u^{\dagger}(x) = 10 \cdot (\cos 4\pi x+1)$.
	\item The operator $\mathcal{C}_0$  is given by $\mathcal{C}_0 = (\text{I} - \alpha \Delta)^{-2},$ where $\alpha = 0.05$ is a fixed constant. Here the Laplace operator is defined on $\Omega$ with zero Neumann boundary condition.
	\item In order to avoid inverse crime \cite{kaipio2006statistical}, the data is generated on a fine mesh with the number of grid points equal to $10^4$. And we use different sizes of mesh $n = \lbrace 100, 200, 500, 700, 900 \rbrace$ in the inverse stage.
\end{itemize}

To illustrate the effectiveness of NCP-iMFVI, we compare it with the non-centered pCN within Gibbs sampling method proposed in \cite{chen2018dimension}. 
In the numerical experiment, we intend to compare the posterior mean function and covariance of $u$ obtained by both algorithms, thus showing that posterior distributions are very similar.
We assume that the hyper-parameter $\lambda$ defining the map $u = \lambda v$ with $v \sim \mu^v_0$, and $\lambda \sim \mu^{\lambda}_0$. 
The likelihood thus depends on both $\lambda$ and $v$, and the Bayes' formula takes the form:
\begin{align*}
 \frac{d\mu}{d\mu_0}(v, \lambda) \varpropto \exp (-\Phi(v, \lambda)).
\end{align*}
Because the Gibbs sampling method is a widely used algorithm, we won't discuss this method in this subsection.
This method is left in Appendix and provided as pseudocode.

\begin{figure}
	\centering
	\subfloat[Trace of non-cenerted Gibbs]{
		\includegraphics[ keepaspectratio=true, width=0.40\textwidth, clip=true]{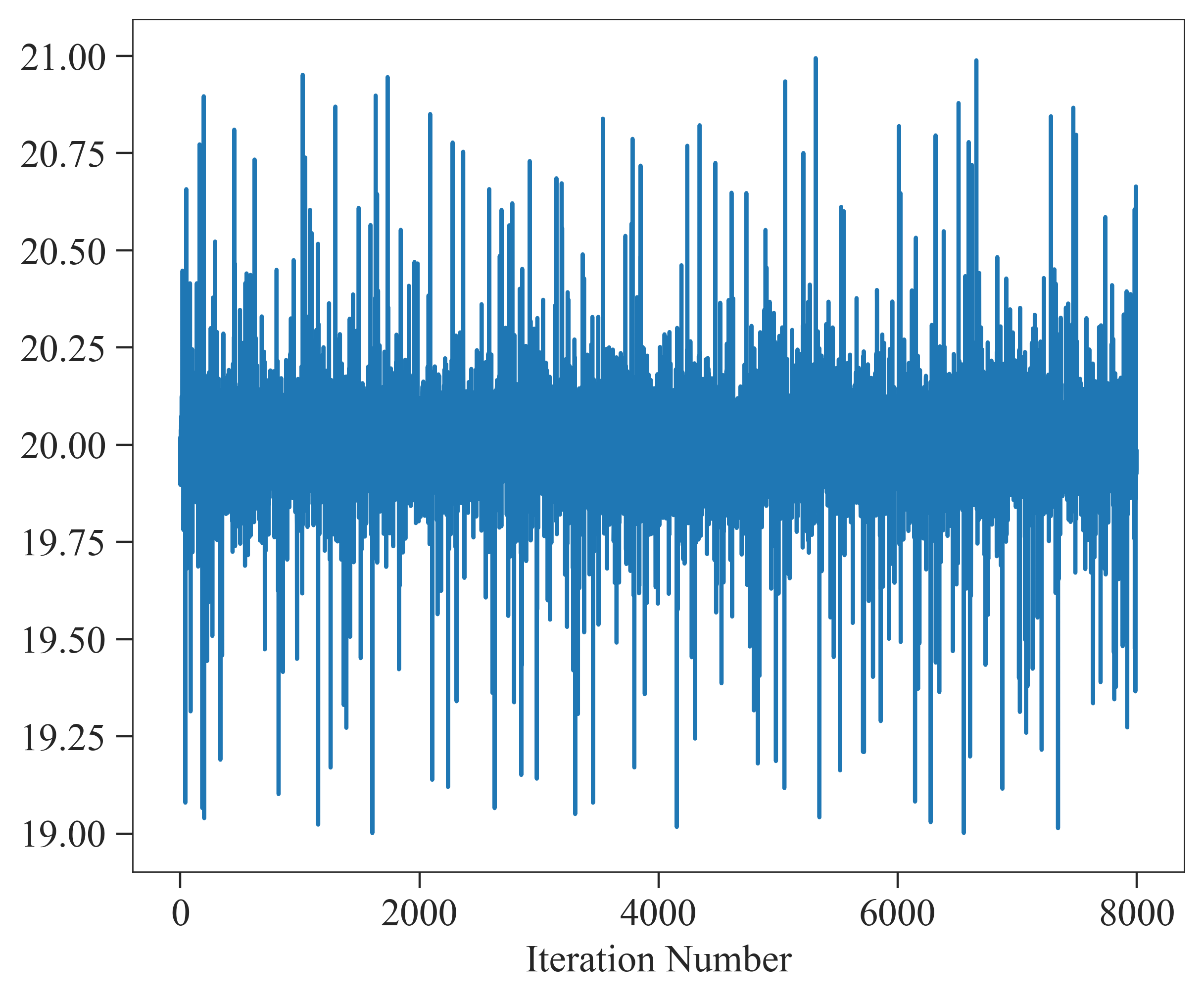}}
	\subfloat[Eigenvalues (Logarithm)]{
		\includegraphics[ keepaspectratio=true, width=0.395\textwidth, clip=true]{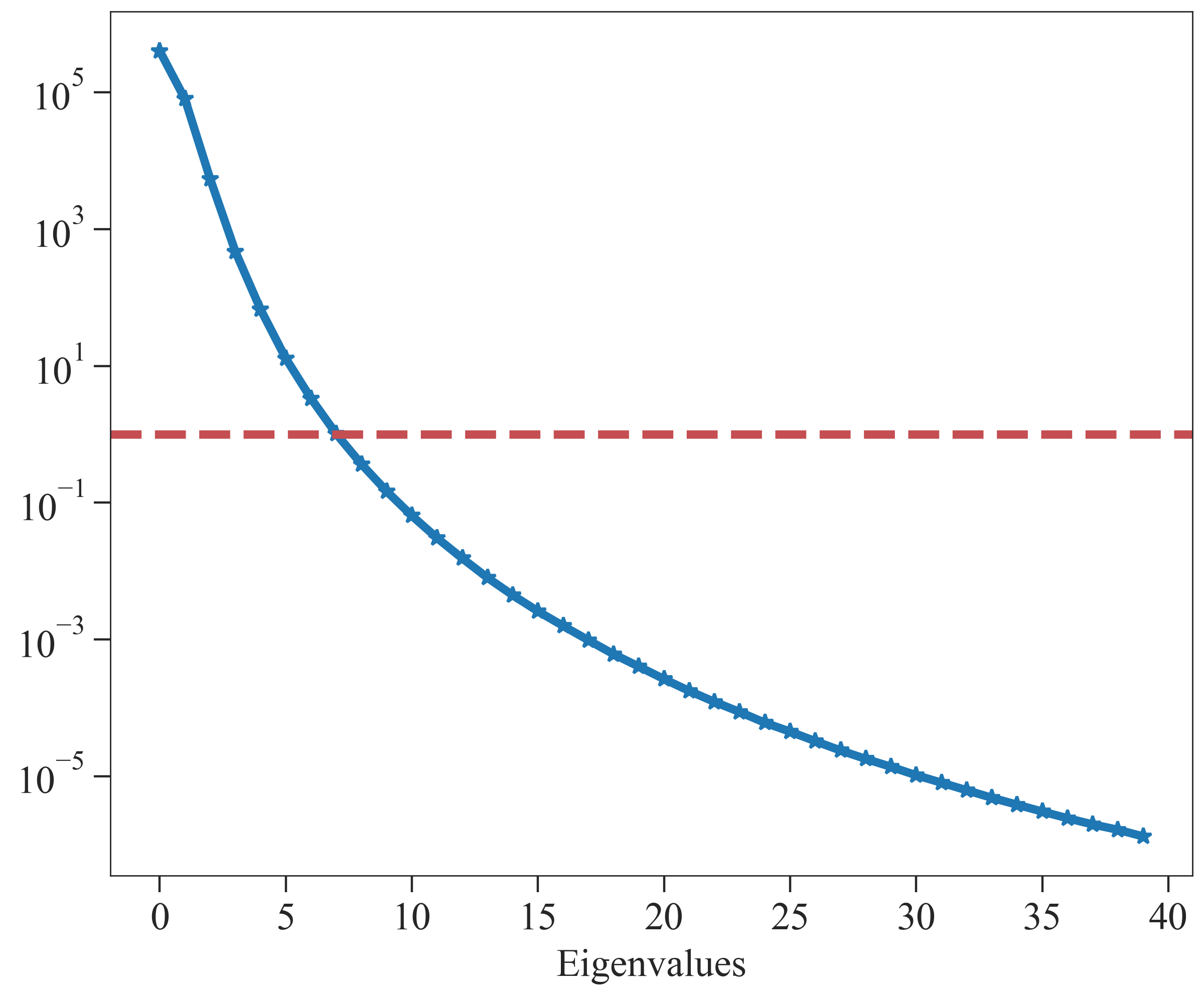}} \\

	\caption{\emph{\small 
			(a): The trace plot of non-centered Gibbs sampling method under the same mesh size of $100$; 
			(b): Logarithm of the eigenvalues $\lbrace \rho \xi_k \rbrace^{40}_{k=1}$. 
			The horizontal red dashed line $\rho \xi = 1$ shows the reference values for the truncation of the eigenvalues.}}		
	
	\label{fig:trace}
	
\end{figure}

\subsubsection{Numerical results}
In this subsection, we compare the computational results of NCP-iMFVI and non-centered Gibbs sampling methods.
Here we briefly discuss the computational cost of the methods in this subsection.

It is clear that the sampling method is computationally expensive.
In \cite{jin2010hierarchical, dunlop2017hierarchical}, the iteration number of the sampling method is chosen as $2\times 10^5$ and $4 \times 10^6$, respectively.
{ To ensure the computational accuracy, we generate $1 \times 10^6$ samples for sampling parameter $v$ and hyper-parameter $\lambda$ in this article.
The trace of the non-centered Gibbs sampling method is our concern, as shown in sub-figure (a) of Figure $\ref{fig:trace}$.
We see that the whole sampling procedure explores the entire sample space completely.
}

As for the NCP-iMFVI method, for calculating the posterior measure of $v$ in each iteration step, we need to solve one adjoint PDE (corresponding to calculate $H^{*}$), and solve $2N_{ite}$ numbers of PDEs (corresponding to calculate $\mathcal{C}_{v_k}$) with $N_{ite}$ being the maximum iteration number.
In the current settings, we assume that $N_{ite} = 10$.
Thus we need to solve $21$ PDEs to calculate the mean function of the posterior measure of $v$.
Next, based on Subsection $\ref{subsec2.5}$, it is unavoidable to calculate the eigenvalues and vectors when we calculate {$\text{Tr}(\mathcal{C}_{v_k}H^{*}\Gamma^{-1}_{\text{noise}}H))$}. 
In order to obtain an accurate approximation of the trace, we use the strategy stated in Subsection $\ref{subsec2.6}$.
{
That is, the tail terms can be neglected when $\rho\xi_k < 1$.
}
Under this prerequisite, in sub-figure (b) of Figure $\ref{fig:trace}$, we see that the horizontal red dashed line {$\rho\xi < 1$} shows the reference values for the truncation of the eigenvalues, and the number of the corresponding eigenvalues is less than $10$, which indicates that the maximum number of eigenvalues $N_{eig}$ can be taken as $10$.
By taking the double-pass algorithm in \cite{villa2021hippylib}, for calculating each eigenvalue, it is required to solve two forward problems and two adjoint problems.
As a result, we need to solve $4N_{eig}=40$ PDEs to calculate the eigenvalues.
At last, for solving the posterior measure of hyper-parameter $\lambda$, we need to calculate $2$ forward PDEs.
Each iteration step is required to solve $2N_{ite} + 4N_{eig} + 3 = 63$ PDEs.
Moreover, the VI method will converge in { $1500$} steps based on our experiment results, thus we choose the maximum iteration number {$N_{max}$} to be { $1500$}.
In summary, we need to calculate at most { $94500$} PDEs during the iterative procedure.
On the other hand, for the non-centered Gibbs sampling method, it is required to calculate $10^6$ PDEs.
We see that the computational cost of the NCP-iMFVI method is much less than the cost of the non-centered Gibbs sampling method.

{
Next, we want to illustrate that the NCP-iMFVI method provides a good estimate of the posterior measure of $u$ and $\lambda$.
Based on \cite{agapiou2014analysis}, the Gibbs sampling method provides a reliable approximation of the posterior measure of the variables according to $u$ and $\lambda$.
It is necessary to make comparisons between the estimated posterior measure of $u$ and $\lambda$ obtained by these two methods.

\begin{figure}
	\centering
	\subfloat[Comparison of $u$ I]{
		\includegraphics[ keepaspectratio=true, width=0.30\textwidth,  clip=true, trim=24pt 18pt 30pt 22pt]{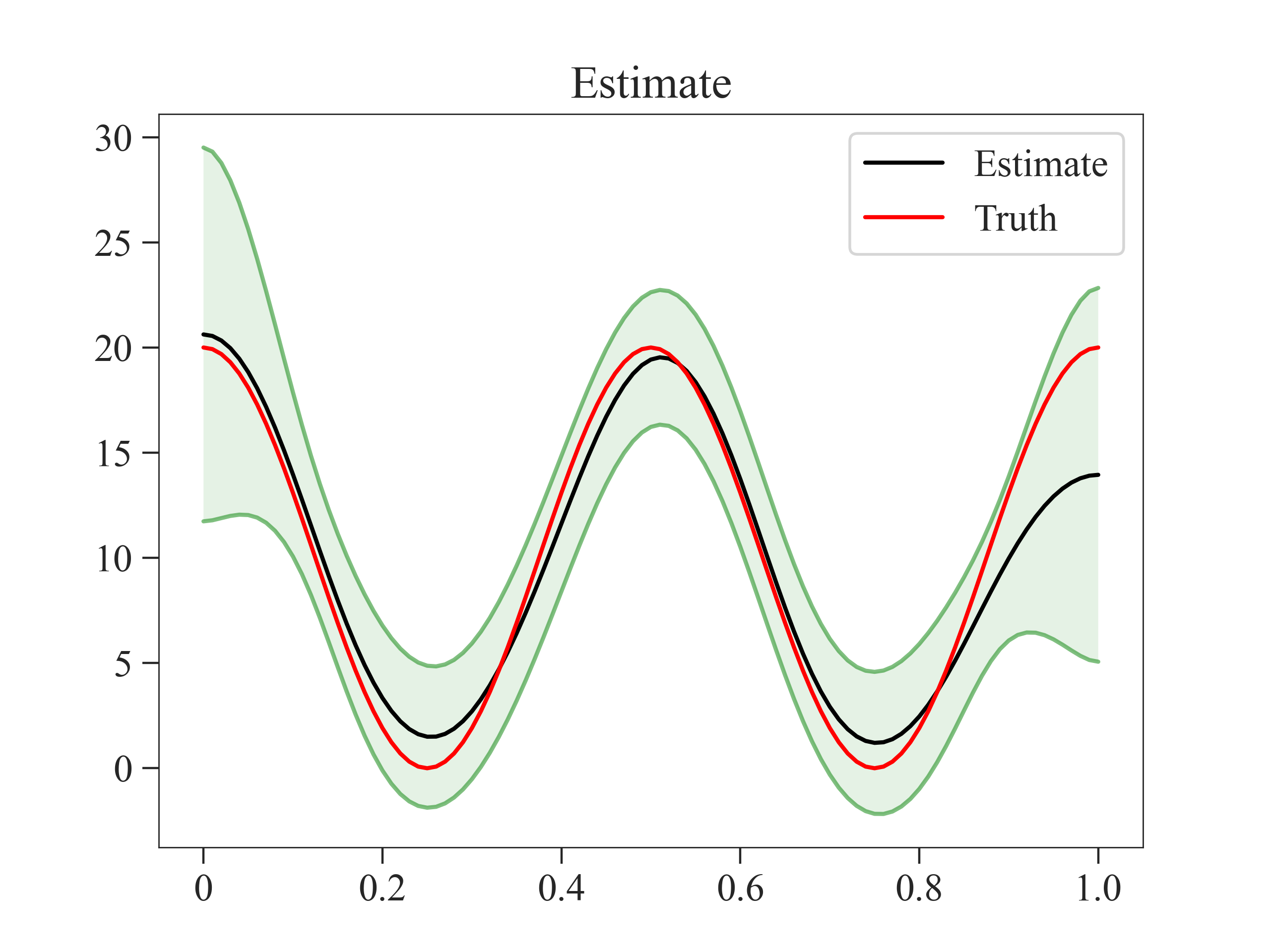}}
	\subfloat[Comparison of $u$ II]{
		\includegraphics[ keepaspectratio=true, width=0.30\textwidth,  clip=true, trim=24pt 18pt 30pt 22pt]{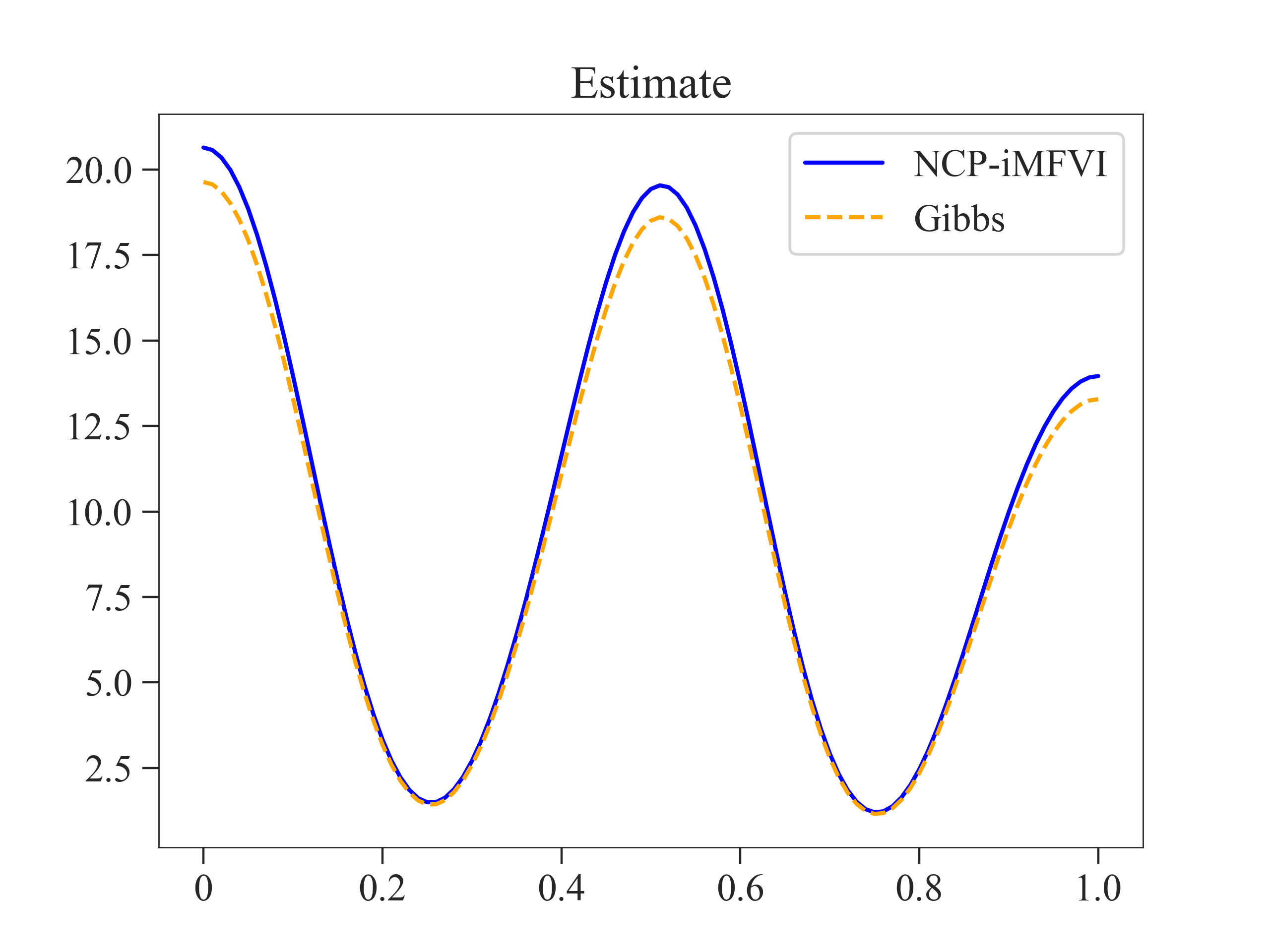}} 
	\subfloat[Comparsion of $\lambda$ densities]{
		\includegraphics[ keepaspectratio=true, width=0.270\textwidth, clip=true, trim=20pt 18pt 30pt 22pt]{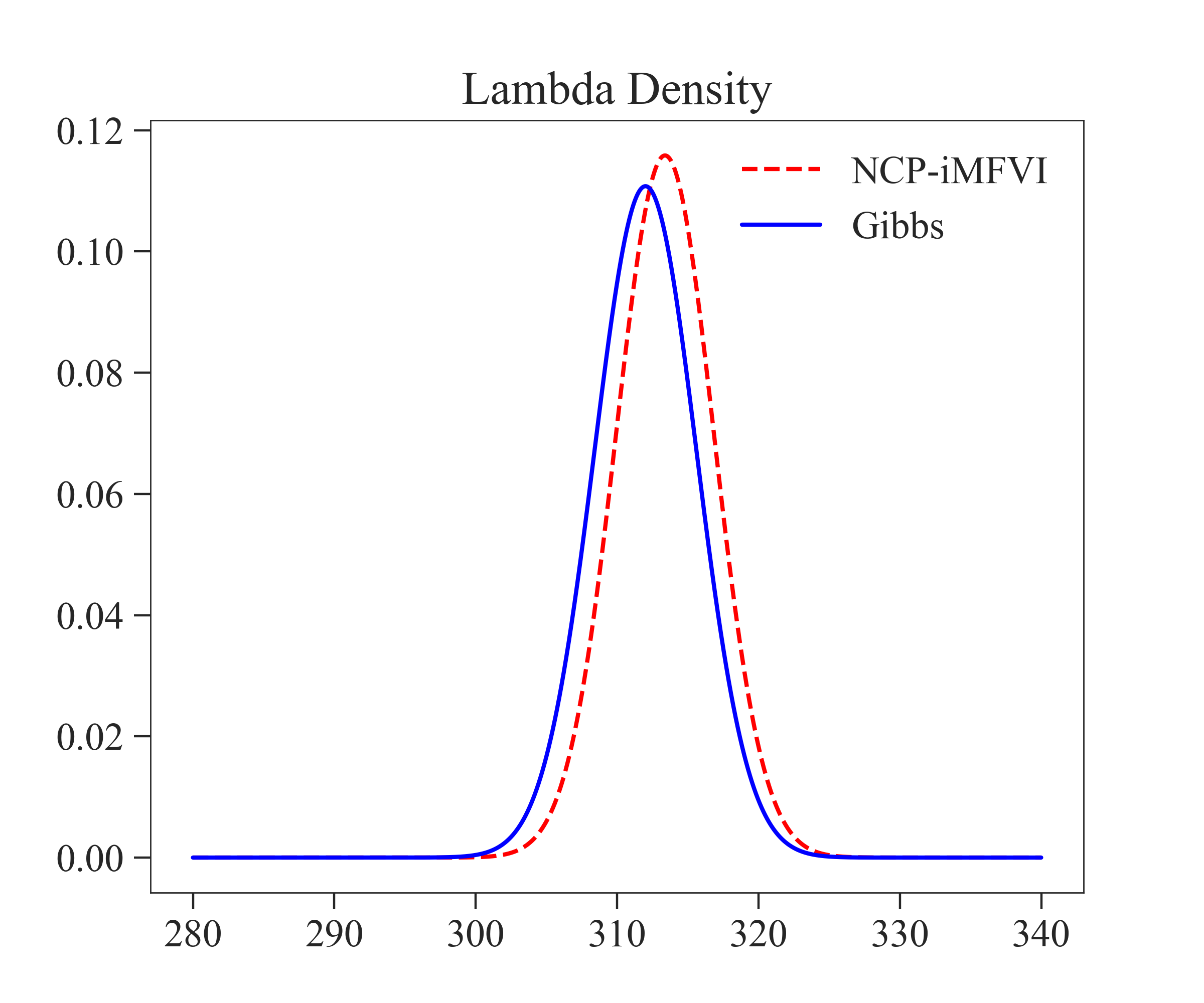}}

	\caption{\emph{\small 
			(a): The comparison of the {estimated posterior mean function} obtained by NCP-iMFVI and the background truth of $u$, respectively.
			The greed shade area represents the $95 \%$ credibility region of estimated posterior mean function; 
			(b): The comparison of the {estimated posterior mean function} obtained by NCP-iMFVI and Gibbs sampling of $u$, respectively; 
			(c): The comparison of estimated posterior density functions of $\lambda$ obtained by NCP-iMFVI and Gibbs sampling method. The red dashed line and the blue solid line represent the estimated posterior density obtained by these two methods, respectively.
	}}
	
	\label{fig:Error}
	
\end{figure}

\noindent \textbf{Discussion of $u$}:

As for the unknown parameter $u$, we care about whether NCP-iMFVI provides a good estimate of the posterior measure, i.e., a good estimate of the posterior mean and the covariance functions.

Discussion of estimated posterior mean function:
Firstly, in sub-figure (a) of Figure $\ref{fig:Error}$, the mean function of estimated posterior measure of $u$ obtained by the NCP-iMFVI method and the background truth are drawn in black solid and green dashed lines, respectively.
Two green lines represent the upper and lower bounds of the $95 \%$ credibility region of the estimated posterior mean function.
We see that the $95 \%$ credibility region includes the background truth, which reflects the uncertainty of the parameter.
In sub-figure (a) of Figure $\ref{fig:lamscom}$, we show the relative errors, which is defined by
\begin{align}\label{equ:relative}
	\text{relative error} = \frac{\lVert u_k - u^{\dagger}\rVert^2_2}{\lVert u^{\dagger}\rVert^2_2}.
\end{align}
The relative error curve illustrates that the iteration process converges within $100$ steps, and is stable around $3 \%$ at the end of the iteration.
The convergence speed is fast since the descending trend is rapid at first $20$ steps.
Based on the visual (sub-figure(a) of Figure $\ref{fig:Error}$) and quantitative ( relative errors shown in sub-figure (a) of Figure $\ref{fig:lamscom}$) evidence, we say that the NCP-iMFVI method provides an estimated posterior mean function of the parameter $u$ which is similar to the background truth.

More importantly, we provide numerical evidence to illustrate that the NCP-iMFVI method provides a good estimate of the mean function by the comparison with Gibbs sampling method.
we draw the comparison of the estimated posterior mean function obtained by two methods in sub-figure (b) of Figure $\ref{fig:Error}$, and the difference between them is visually small.
The relative error between them is given by
\begin{align}\label{equ:relatwo}
	\text{relative error} := \frac{\lVert u^{*}_{\text{N}} - u^{*}_{\text{G}} \rVert^2_{L^2}}{\lVert u^{*}_{\text{G}} \rVert^2_{L^2}} = 0.04977.
\end{align}
Hence, the estimated posterior mean function obtained by NCP-iMFVI is quantitatively similar to that by Gibbs sampling method.

As a result, combining visual (sub-figure (b) of Figure $\ref{fig:Error}$) and quantitative (relative error given in ($\ref{equ:relatwo}$)) evidence, the NCP-iMFVI method provides a good estimate of the posterior mean function.

\begin{figure}
	\centering
	
	\subfloat[Relative errors]{
		\includegraphics[keepaspectratio=true, width=0.35\textwidth, clip=true, trim=22pt 0pt 40pt 22pt]{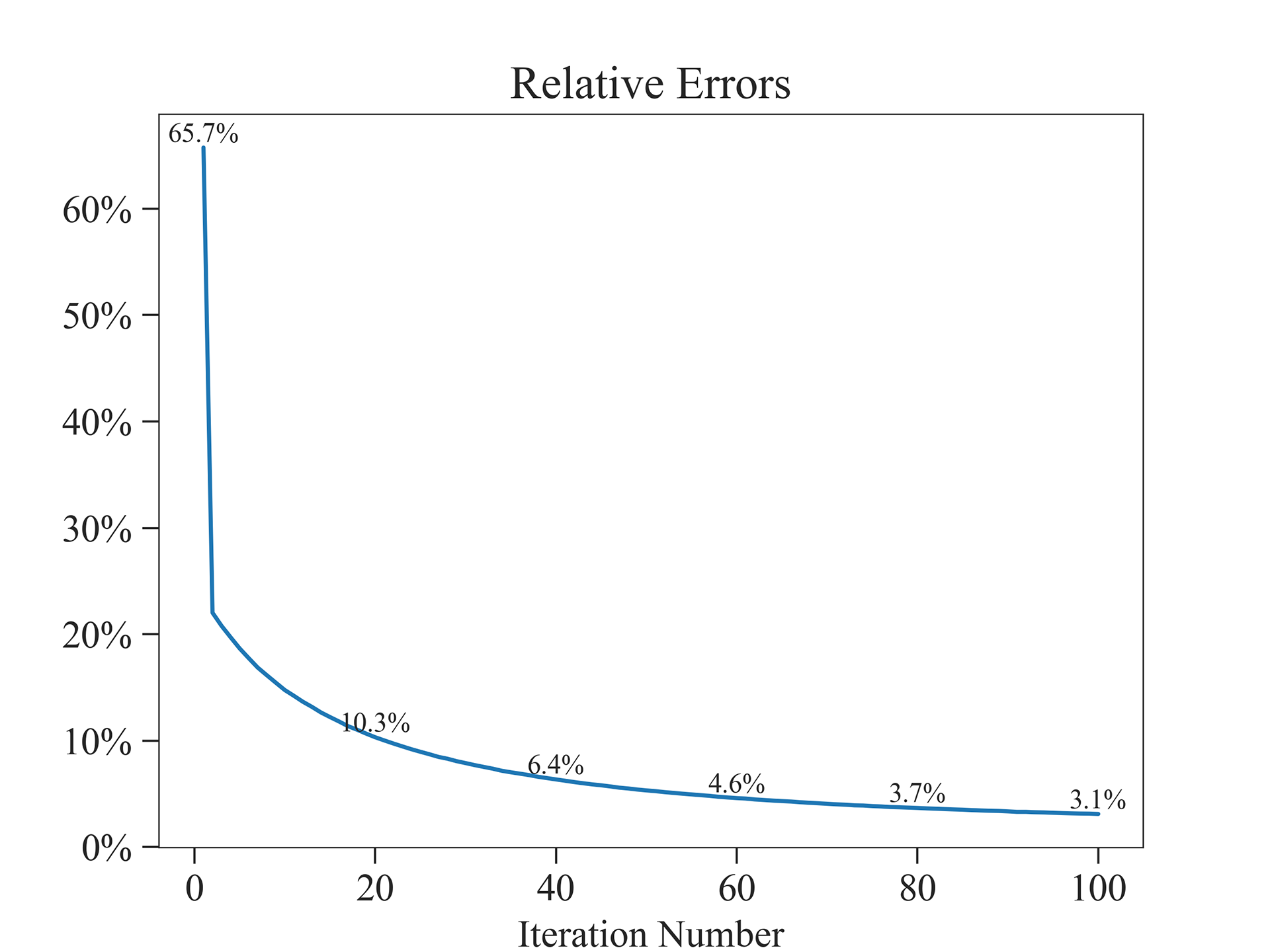}} 
	\subfloat[Step value of $\lambda$]{
		\includegraphics[keepaspectratio=true, width=0.35\textwidth, clip=true, trim=22pt 0pt 40pt 22pt]{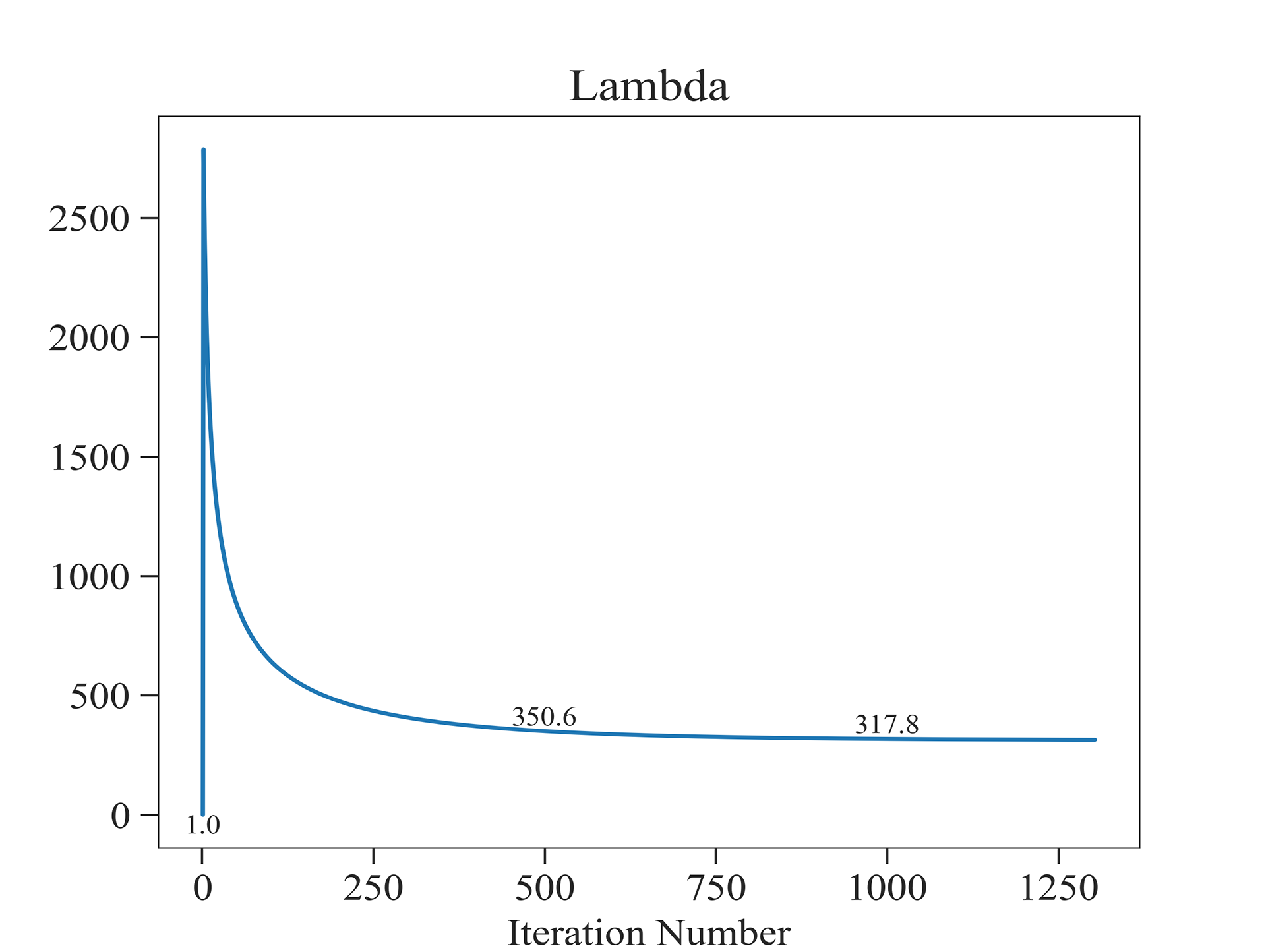}}

	\caption{\emph{\small { 
				(a): Relative errors of estimated posterior means of $u$ in the $L^2-norm$ of NCP-iMFVI method with size of mesh $n = 100$; 
				(b): The step values of $\lambda$ with size of mesh $n = 100$.}}}
	\label{fig:lamscom}
	
\end{figure}

Discussion of estimated posterior covariance function:
Secondly, we focus on the evidence to illustrate that the NCP-iMFVI method provides a good estimate of the covariance.
For the numerical convenience, we compare the estimated posterior covariance matrices, variance and covariance functions obtained by these two methods.

Before making the comparisons, we need to introduce the definition of covariance matrix, variance function and covariance function of the posterior covariance.
Consider a Gaussian random field $m$ on a domain $\Omega$ with mean $\bar{m}$ and the covariance function $c(\bm{x}, \bm{y})$ describing the covariance between $m(\bm{x})$ and $m(\bm{y})$:
\begin{align*}
	c(\bm{x}, \bm{y}) = \mathbb{E}((m(\bm{x})-\bar{m}(\bm{x}))(m(\bm{y})-\bar{m}(\bm{y}))), \quad \text{for} \ \bm{x}, \bm{y} \in \Omega.
\end{align*}
The corresponding covariance operator $\mathcal{C}$ is 
\begin{align*}
	(\mathcal{C}\phi^{\prime})(\bm{x}) = \int_{\Omega}c(\bm{x}, \bm{y})\phi^{\prime}(\bm{y})d\bm{y},
\end{align*}
where the functions $\phi^{\prime}$ are some sufficiently regular functions defined on $\Omega$.
To provide more numerical discussion of the posterior covariance, we introduce the matrix representation of the operator.
Let $V_h$ be the finite-dimensional subspace of $L^2(\Omega)$ originating from a finite element discretization with continuous Lagrange basis functions $\lbrace \phi_i\rbrace^n_{i=1}$, which correspond to the nodal points $\lbrace\bm{x}_j\rbrace^n_{j=1}$, such that $\phi_i(\bm{x}_j)=\delta_{ij}.$
The nodal vector of a function $m_h = \sum^n_{j=1}m_j\phi_j \in V_h$ is denoted by the boldface symbol $\bm{m} = (m_1, \cdots, m_n)^T$.
Inner products between nodal coefficient vectors are given by
\begin{align*}
	(\bm{m}_1, \bm{m}_2)_{\bm{M}} = \bm{m}^T_1\bm{M}\bm{m}_2,
\end{align*}
where the $ij$ element of mass matrix $\bm{M}$ is defined by
\begin{align*}
	M_{ij} = \int_{\Omega}\phi_i(\bm{x})\phi_j(\bm{x}) d\bm{x}.
\end{align*}
The matrix representation $\bm{c}$ of the operator $\mathcal{C}$ is given with respect to the Lagrange basis $\lbrace \phi_i\rbrace^n_{i=1}$ in $\mathbb{R}^n$ by
\begin{align*}
	\int_{\Omega} \phi_i\mathcal{C}\phi_j dx = \langle \bm{e}_i, \bm{c}\bm{e}_j\rangle_{\bm{M}},
\end{align*}
where $\bm{e}_j$ is the coordinate vector corresponding to the basis function $\phi_j$.

Let $\bm{\Phi(x)} = (\phi_1(\bm{x}), \cdots, \phi_n(\bm{x}))$, $\tilde{\bm{v}}_{k} = \bm{\mathcal{C}}^{1/2}_0\bm{v}_k$, and $\lbrace \xi_k, \bm{v}_k\rbrace^{n}_{k=1}$ be the eigenpairs of $\bm{\mathcal{G}}$, which is introduced in Subsection $\ref{subsec2.6}$.
In order to represent the posterior covariance operator $\mathcal{C}$, we need to calculate the posterior covariance field $c$ given by
\begin{align*}
	c(\bm{x}, \bm{y}) \approx c_{0}(\bm{x}, \bm{y}) - \sum^r_{k=1}d_k(\tilde{\bm{v}}_{kh}(\bm{x}))(\tilde{\bm{v}}_{kh}(\bm{y}))^T.
\end{align*}
Here $\tilde{\bm{v}}_{kh}(\bm{x}) = \bm{\Phi(x)}^T\tilde{\bm{v}}_{k}$, $d_k = \xi_i/\xi_i + 1$, $c_{0}(\bm{x}, \bm{y})$ is the covariance function of the prior covariance operator $\mathcal{C}_0 = (\text{I} - \alpha\Delta)^{-2}$, based on the setting in Subsection $\ref{subsec3.1.1}$.
Hence, the variance function $\lbrace c(x_i, x_i)\rbrace^{n}_{i=1}$ of the covariance field $c(\bm{x}, \bm{y})$ is calculated on the pairs of points $\lbrace (x_i, x_{i})\rbrace^{n}_{i=1}$.
And the covariance function $\lbrace c(x_i, x_{i+k})\rbrace^{n-k}_{i=1}$ of the covariance field $c(\bm{x}, \bm{y})$ is calculated on the pairs of points $\lbrace (x_i, x_{i+k})\rbrace^{n-k}_{i=1}$, where $k$ is an integer.
Readers can seek more details in \cite{bui2013computational}.

\begin{figure}
	\centering
	
	\subfloat[NCP-iMFVI]{
		\includegraphics[ keepaspectratio=true, width=0.32\textwidth, clip=true]{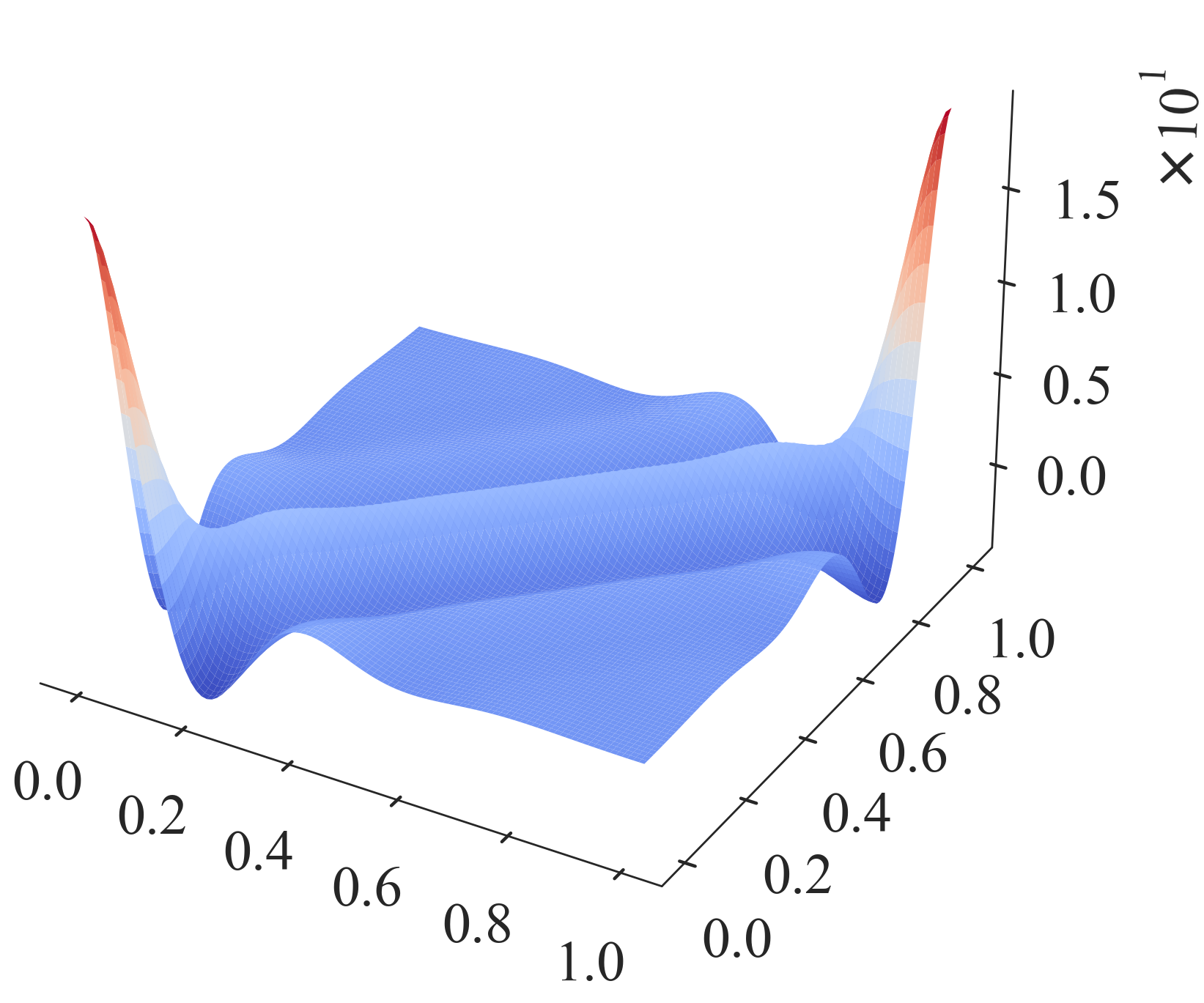}} 
	\subfloat[non-centered Gibbs sampling]{
		\includegraphics[ keepaspectratio=true, width=0.32\textwidth, clip=true]{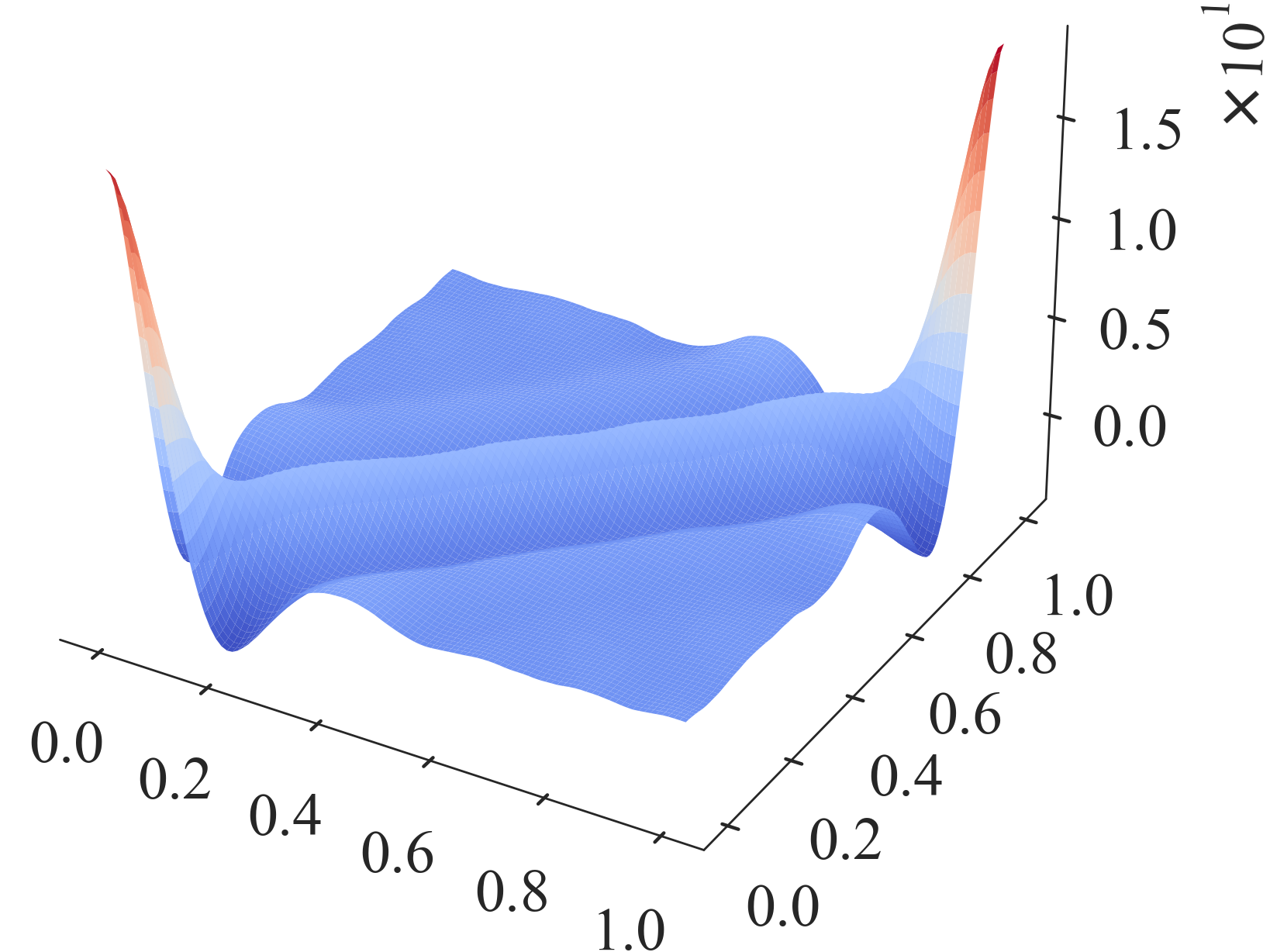}} 
	\subfloat[Difference]{
		\includegraphics[ keepaspectratio=true, width=0.32\textwidth, clip=true]{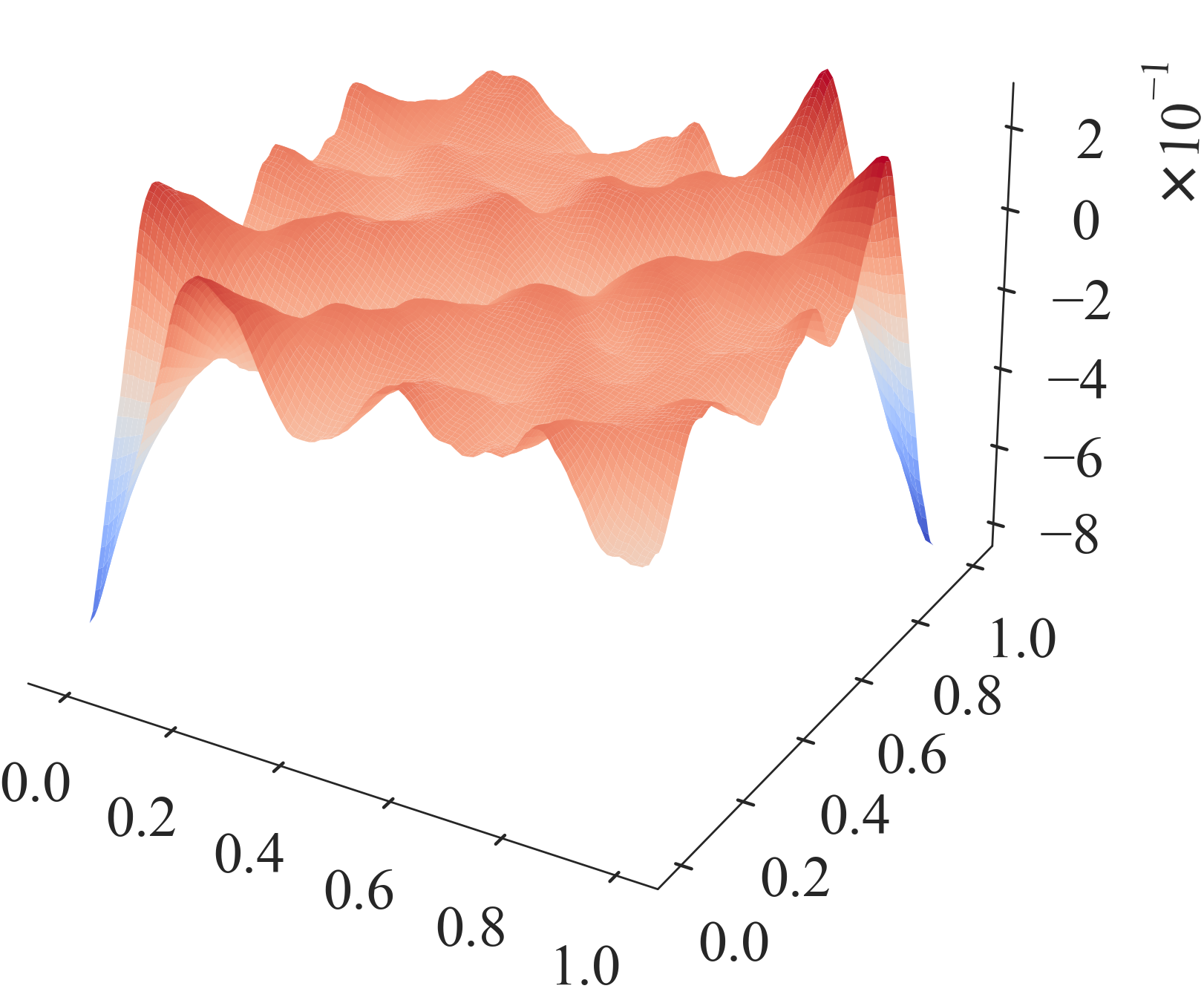}} \\

	\caption{\emph{\small {The comparison of posterior covariance of $u$ with the mesh size $n=100$ respectively. (a): The covariance given by NCP-iMFVI method; (b): The covariance given by the non-centered Gibbs sampling method; (c): The difference between the covariance obtained by NCP-iMFVI and Gibbs sampling methods.}}}
	\label{fig:Covariance}
	
\end{figure}

Then we can make the comparisons of the covariance matrices, variance and covariance functions.
Let us denote $\mathcal{N}(u^{*}_{\text{N}}, \mathcal{C}_{\text{N}})$ and $\mathcal{N}(u^{*}_{\text{G}}, \mathcal{C}_{\text{G}})$ be the estimated posterior measures of $u$ obtained by NCP-iMFVI and Gibbs sampling methods, respectively.
We show the comparisons of the following covariance matrices, variance, and covariance functions:
\begin{itemize}
	\item $\bm{c}_N$, and $\bm{c}_G$, which are the matrix representations of the covariance operators $\mathcal{C}_{\text{N}}$ and $\mathcal{C}_{\text{G}}$, respectively.
	\item $\lbrace c_N(x_i, x_i)\rbrace^{n}_{i=1}$ and $\lbrace c_G(x_i, x_i)\rbrace^{n}_{i=1}$, which are the variance functions of the covariance operators $\mathcal{C}_{\text{N}}$ and $\mathcal{C}_{\text{G}}$, respectively.
	\item $\lbrace c_N(x_i, x_{i+20}) \rbrace^{n-20}_{i=1}$ and $\lbrace c_G(x_i, x_{i+20}) \rbrace^{n-20}_{i=1}$, which are the covariance functions calculated on the pairs of points $\lbrace (x_i, x_{i+20})\rbrace^{n-20}_{i=1}$.
	\item $\lbrace c_N(x_i, x_{i+40}) \rbrace^{n-40}_{i=1}$ and $\lbrace c_G(x_i, x_{i+40}) \rbrace^{n-40}_{i=1}$, which are the covariance functions calculated on the pairs of points $\lbrace (x_i, x_{i+40})\rbrace^{n-40}_{i=1}$.
\end{itemize}
Here the mesh size $n = 100$.
We calculate the relative errors according to these functions with the following definitions:
\begin{itemize}
	\item The relative error between $\bm{c}_N$, and $\bm{c}_G$ is given by	
		\begin{align*}
		\text{relative error} = \frac{\lVert \bm{c}_N - \bm{c}_G \rVert^2_F}{\lVert \bm{c}_N \rVert^2_F},
		\end{align*}
		where $\lVert \cdot \rVert_F$ denotes the Frobenius-norm defined by
		\begin{align*}
			\lVert A \rVert_F = \bigg (\sum^{n}_{i, j=1} a_{ij}^2 \bigg )^{1/2}.
		\end{align*}
	\item The relative errors between the variance functions and the covariance functions are given by:
	\begin{align*}
		\text{relative error of variance} &= \frac{\sum^{n}_{i=1}\big (c_N(x_i, x_i) - c_G(x_i, x_i)\big )^2}{\sum^{n}_{i=1}\big (c_G(x_i, x_i)\big)^2}, \\
		\text{relative error of covariance} &= \frac{\sum^{n-k}_{i=1}\big(c_N(x_i, x_{i+k}) - c_G(x_i, x_{i+k})\big)^2}{\sum^{n-k}_{i=1}\big(c_G(x_i, x_{i+k})\big)^2},
	\end{align*}
	and $k$ is an integer.
\end{itemize}
\begin{table}
	\renewcommand{\arraystretch}{1.5}
	\centering
		\caption{\emph{\small The relative errors between the covariance matrix, variance function, and covariance functions.}} \label{table:relative}
	\begin{tabular}{c|cccc}
		\hline $\text{Function} $ & $\bm{c}$  & $\lbrace c(x_i, x_i)\rbrace^{n}_{i=1}$ & $\lbrace c(x_i, x_{i+20}) \rbrace^{n-20}_{i=1}$ & $\lbrace c(x_i, x_{i+50}) \rbrace^{n-50}_{i=1}$  \\
		\hline $\text{Relative Error}$ & $0.0860$  & $0.0688$ & $0.1152$ & $0.1514$ \\
		\hline
	\end{tabular}
\end{table}
According to the results shown in Table $\ref{table:relative}$, the relative errors are small.
This indicates that the covariance matrices, variance and covariance functions obtained by these two methods are quantitatively similar to each other.

In Figure $\ref{fig:Covariance}$, we draw the covariance matrix $\bm{c}_N$, and $\bm{c}_G$ in the sub-figures (a) and (b), and sub-figure (c) shows the difference $\bm{c}_N - \bm{c}_G$.
The covariance obtained by NCP-iMFVI is similar to that obtained by the non-centered Gibbs sampling method, and the difference between them is visually small.
Furthermore, we provide a detailed comparison of the variance and covariances functions in Figure $\ref{fig:Variance}$.
In all the sub-figures of Figure $\ref{fig:Variance}$, the variance and covariance functions obtained by NCP-iMFVI and Gibbs sampling are drawn in blue solid and the orange dashed lines.
In sub-figure (a) of Figure $\ref{fig:Variance}$, we show the variance function calculated on all the mesh point pairs $\lbrace (x_i, x_{i})\rbrace^{n}_{i=1}$ with $n = 100$.
In sub-figures (b) and (c), we show the covariance functions calculated on the pairs of points $\lbrace (x_i, x_{i+20})\rbrace^{n-20}_{i=1}$, and $\lbrace (x_i, x_{i+40})\rbrace^{n-40}_{i=1}$, respectively.
The covariance matrices, variance and covariance functions obtained by the NCP-iMFVI are visually similar to those obtained by the Gibbs sampling method.
The relative errors between the covariance matrices and variance, covariance functions given in Table $\ref{table:relative}$ are small, which means the posterior covariance obtained by NCP-iMFVI and Gibbs sampling methods are quantitatively similar to each other.
Furthermore, as is seen in the sub-figure (a) of Figure $\ref{fig:Error}$, the $95 \%$ credibility region of the estimated posterior mean contains the background truth, which indicates that the Bayesian setup is meaningful and in accordance with the frequentist theoretical investigations of the posterior consistency \cite{wang2019frequentist, zhang2020convergence}. 
Overall, we can say that the NCP-iMFVI method provides a good estimate of the posterior covariance based on the visual (Figures $\ref{fig:Covariance}$ and $\ref{fig:Variance}$) and quantitative (Table $\ref{table:relative}$) evidence.

\begin{figure}
	\centering
	
	\subfloat[Variance]{
		\includegraphics[ keepaspectratio=true, width=0.30\textwidth, clip=true, trim=24pt 18pt 30pt 22pt]{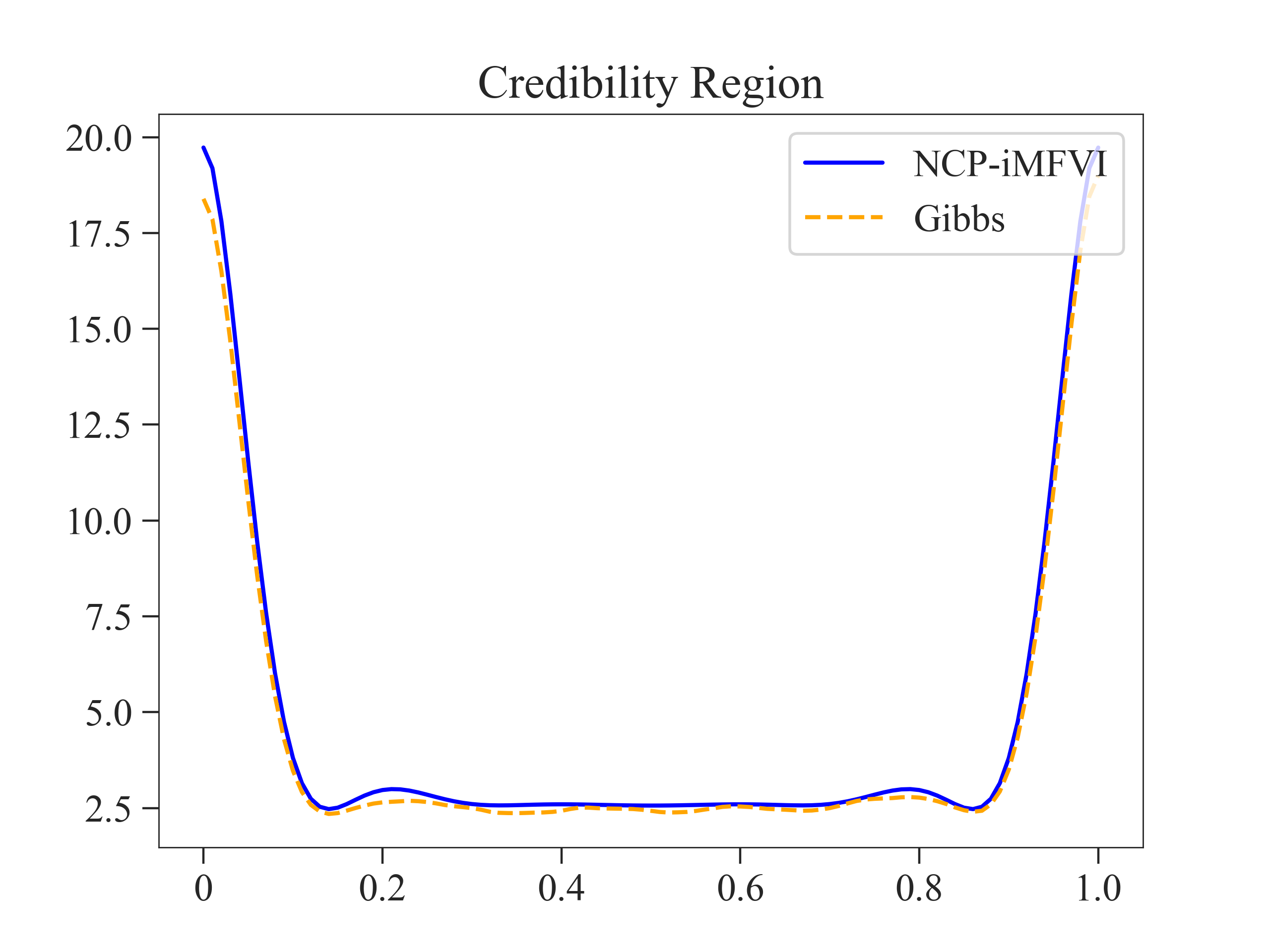}}
	\subfloat[Covariance]{
		\includegraphics[ keepaspectratio=true, width=0.30\textwidth, clip=true, trim=24pt 18pt 30pt 22pt]{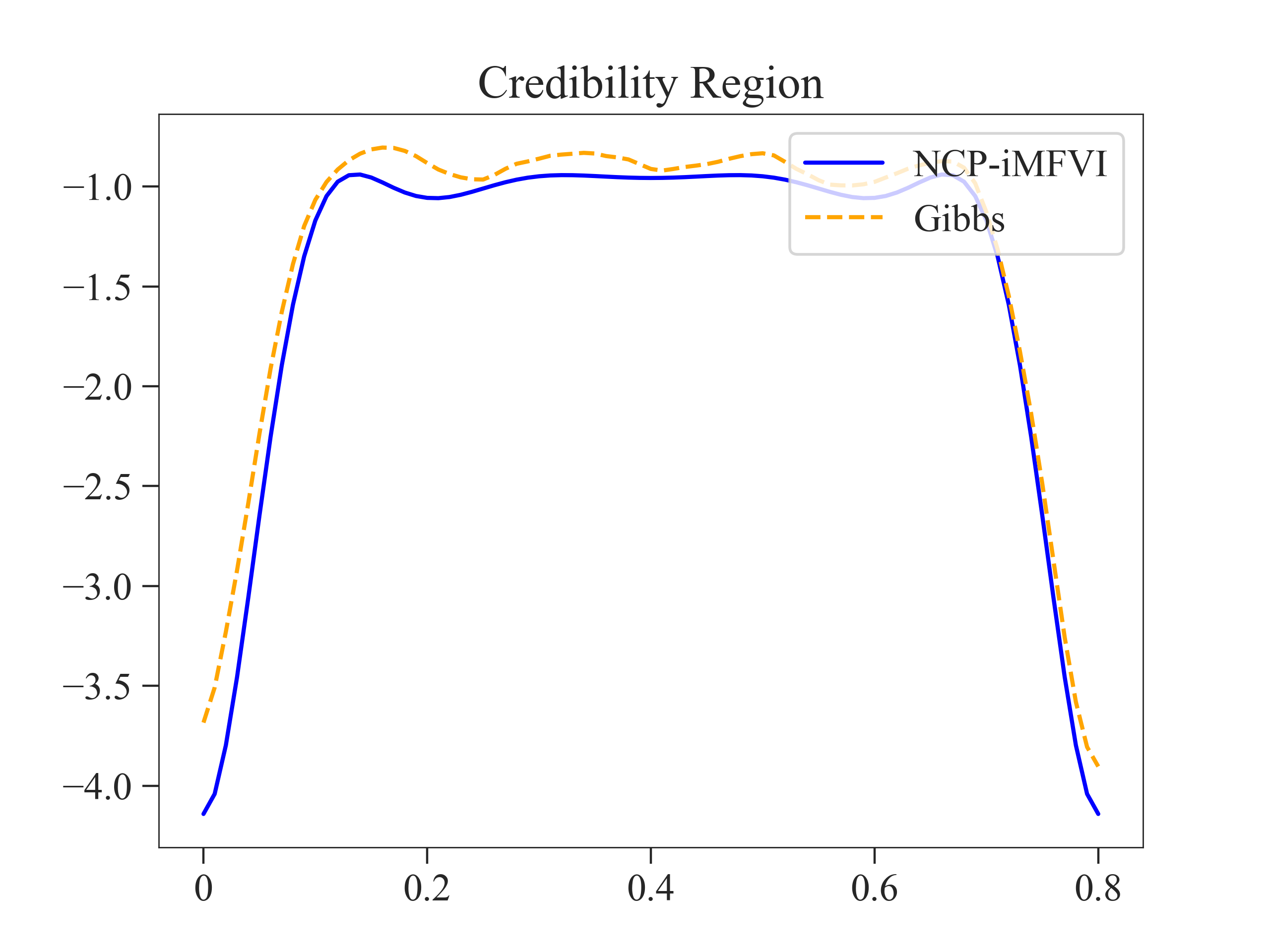}} 
	\subfloat[Covariance]{
		\includegraphics[ keepaspectratio=true, width=0.30\textwidth, clip=true, trim=24pt 18pt 30pt 22pt]{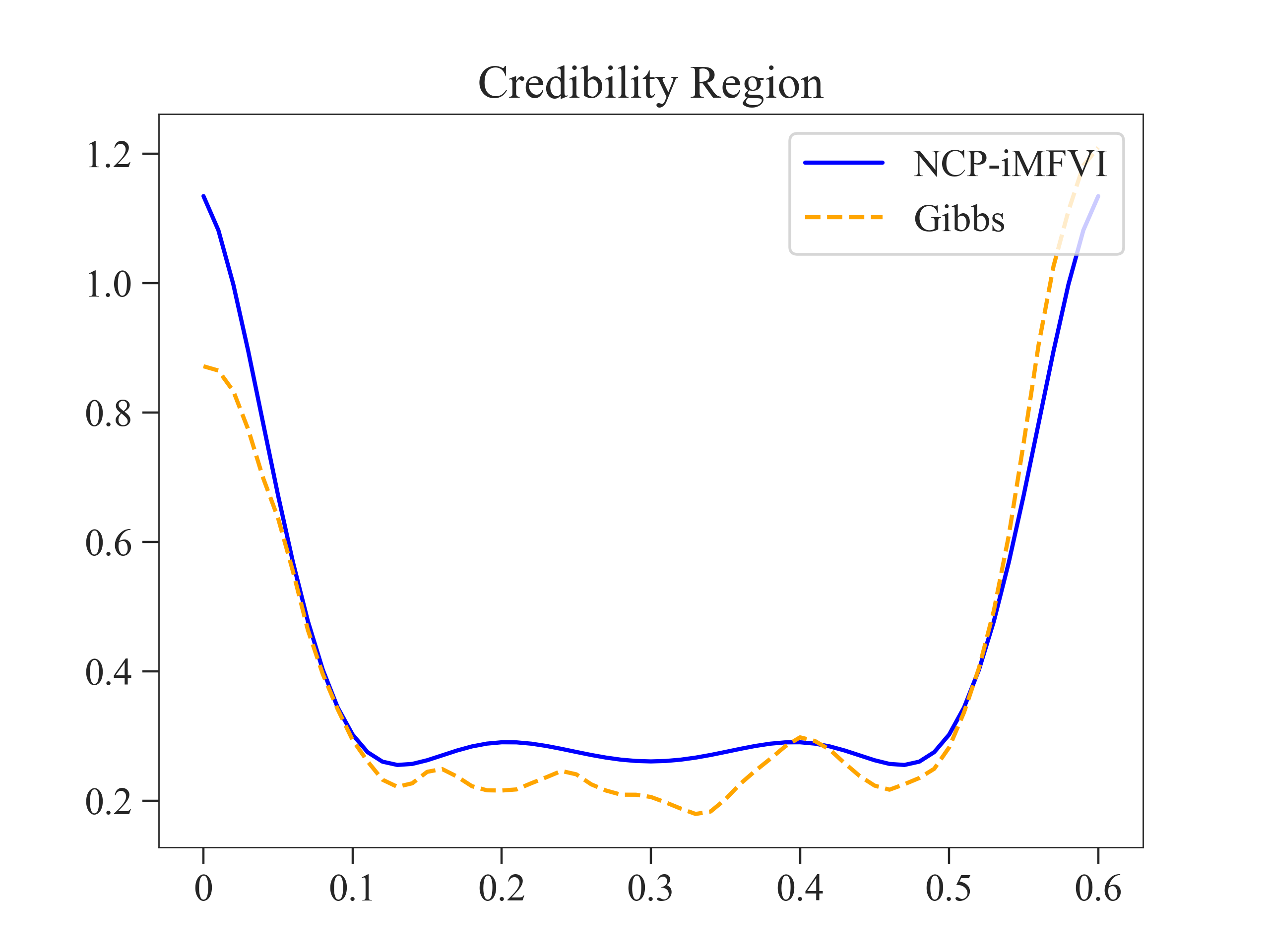}}

	\caption{\emph{\small { The estimated variance and covariance functions obtained by the NCP-iMFVI (blue solid line), and Gibbs sampling method (orange dashed line).
				(a): The variance function $\lbrace c(x_i, x_i)\rbrace^{100}_{i=1}$ on all the mesh point pairs $\lbrace (x_i, x_{i})\rbrace^{100}_{i=1}$;
				(b): The covariance function $\lbrace c(x_i, x_{i+20}) \rbrace^{80}_{i=1}$ on the mesh points $\lbrace (x_i, x_{i+20}) \rbrace^{80}_{i=1}$;
				(c): The covariance function $\lbrace c(x_i, x_{i+40}) \rbrace^{60}_{i=1}$ on the mesh points $\lbrace (x_i, x_{i+40}) \rbrace^{60}_{i=1}$}}.}
	\label{fig:Variance}
	
\end{figure}

\noindent \textbf{Discussion of $\lambda$}:

In the following, we provide numerical evidence to illustrate that the difference between posterior densities of $\lambda$ obtained by NCP-iMFVI and Gibbs sampling methods is small.
In sub-figure (b) of Figure $\ref{fig:lamscom}$, we draw the step values of $\lambda$, and $\lambda$ is finally converged to $313.387$.
At the beginning of the iteration process, the convergence speed is rapid, which is similar to the convergence speed of the relative error curve shown in sub-figure (a) of Figure $\ref{fig:lamscom}$.
Let measure $\nu^{\lambda}_{\text{N}} = \mathcal{N}(\lambda^{*}_{\text{N}}, \sigma_{\lambda_{\text{N}}})$ be the estimated posterior measure of $\lambda$ obtained by the NCP-iMFVI method, and measure $\nu^{\lambda}_{\text{G}} = \mathcal{N}(\lambda^{*}_{\text{G}}, \sigma_{\lambda_{\text{G}}})$ be the the estimated posterior measure obtained by the Gibbs sampling method, where we have
\begin{align*}
	\lambda^{*}_{\text{N}} = 313.387, \ \sigma_{\lambda_{\text{N}}} = 11.861, \\
	\lambda^{*}_{\text{G}} = 312.006, \  \sigma_{\lambda_{\text{G}}} = 12.972.
\end{align*}
The estimated posterior density functions of $\lambda$ are shown in the sub-figure (b) of Figure $\ref{fig:Error}$, where the red dashed and blue solid lines represent the density functionf obtained by NCP-iMFVI and Gibbs sampling method, respectively.
We see that the estimated posterior density obtained by NCP-iMFVI is visually similar to that of the Gibbs sampling.

For providing more numerical comparison between $\nu^{\lambda}_{\text{N}}$ and $\nu^{\lambda}_{\text{G}}$, we calculate the KL divergence of these two measures:
\begin{align*}
	D_{{\text{KL}}} (\nu^{\lambda}_{\text{N}} \Vert \nu^{\lambda}_{\text{G}}) &= \mathbb{E}^{\nu^{\lambda}_{\text{N}}}\log \bigg(\frac{d\nu^{\lambda}_{\text{N}}}{d\nu^{\lambda}_{\text{G}}} \bigg) \\
	&= \ln \sqrt{\sigma_{\lambda_{\text{G}}} / \sigma_{\lambda_{\text{N}}}} + \frac{\sigma_{\lambda_{\text{N}}} - \sigma_{\lambda_{\text{G}}}}{2 \sigma_{\lambda_{\text{G}}}}
	+ \frac{(\lambda^{*}_{\text{N}} - \lambda^{*}_{\text{G}})^2}{2 \sigma_{\lambda_{\text{G}}}} = 0.07546.
\end{align*}
The KL divergence between these two densities is quantitatively small.

Since the Gibbs sampling method provides a reliable estimate of the hyper-parameter of $\lambda$ based on \cite{agapiou2014analysis}, we say that the NCP-iMFVI method provides a reliable estimate of $\lambda$ as well, according to the visual (sub-figure (b) of Figure $\ref{fig:lamscom}$) and quantitative (KL divergence) evidence.
}

\begin{table}
	\renewcommand{\arraystretch}{1.5}
	\centering
	{
		\caption{\emph{\small The posterior mean and variance of $\lambda$ under different discrete meshes.}} \label{table:lambda}
		\begin{tabular}{c|ccccc}
			\hline $\text{Mesh level}$ & $100$  & $300$ & $500$ & $700$ & $900$    \\
			\hline $\text{Mean}$ & $313.38665$ & $313.01847$ & $313.10529$ & $313.11057$ & $313.15050$ \\
			\hline $\text{Variance}$ & $11.86120$  & $11.83329$ & $11.83986$ & $11.84026$ & $11.84328$ \\
			\hline
		\end{tabular}
	}
\end{table}

{
Furthermore, because of taking the reparameterization $u = \lambda v$, we expect that the posterior measure of $u$ is the same even if we take different prior measures of $v$.
We want to show that the posterior mean of $\lambda$ is self-adjustable when the prior measure of $v$ changes.
Based on the settings in Subsection $\ref{subsec3.1.1}$, the prior measure of $v$ is $\mu^v_0 = \mathcal{N}(0, \mathcal{C}_0)$, where $\mathcal{C}_0 = (\text{I} - \alpha\Delta)^{-2}$.
Then sub-figure (a) of Figure $\ref{fig:lamhalf}$ shows the posterior measure of $\lambda$, and the posterior mean ${\lambda^{*}_1}$ is around $313$.
Next, we set the prior measure of $v$ as ${(\mu^{v}_0)^{\prime}: = \mathcal{N}(0, 4\mathcal{C}_0)}$.
Then sub-figure (b) of Figure $\ref{fig:lamhalf}$ shows the posterior measure of $\lambda$, and the posterior mean ${\lambda^{*}_2}$ is around $176$.
We can see that ${\lambda^{*}_2 \approx 0.5\times \lambda^{*}_1}$.
That means the posterior mean of $\lambda$ is self-adjustable, which is in line with our theorized expectations.      

\noindent \textbf{Summary}:

Taking the Bayesian approach, a good approximation of posterior measure of $u$ not only needs a good posterior mean function, but also a good estimate of the covariance which can quantify the uncertainty of the parameter $u$.
As we mentioned in the discussion of $u$, the estimated posterior mean function of $u$ obtained by NCP-iMFVI is similar to that by Gibbs sampling, based on the visual (sub-figure (b) of Figure $\ref{fig:Error}$) and quantitative (relative error calculated in ($\ref{equ:relatwo}$)) evidence.
On the other hand, $95 \%$ credibility region of the estimated mean function includes background truth.
Combining Figures $\ref{fig:Covariance}$ and $\ref{fig:Variance}$, we see that posterior covariance matrices, variance and covariance functions obtained by these two methods are visually similar to each other.
And Table $\ref{table:relative}$ shows that the relative errors between the covariance matrices, variance, and covariance functions are quantitatively small, which indicates that the posterior covariance obtained by NCP-iMFVI method is as good as it by Gibbs sampling method.
Hence, NCP-iMFVI method provides a reliable estimate of the posterior measure of $u$ based on the visual and quantitative evidence.

For the parameter $\lambda$, we draw the posterior densities obtained by these two methods, and a large part overlap with each other.
The KL divergence between these two distributions is small, which means that they are quantitatively similar to each other.
This evidence confirms that  NCP-iMFVI method provides a good estimate of the posterior measure of $\lambda$.

Overall, the proposed NCP-iMFVI method obtains a good approximation of the posterior measures for both $u$ and $\lambda$.

\begin{figure}
	\centering
	
	\subfloat[Posterior density with $\mathcal{N}(0, \mathcal{C}_0)$]{
		\includegraphics[ keepaspectratio=true, width=0.35\textwidth, clip=true, trim=5pt 5pt 5pt 5pt]{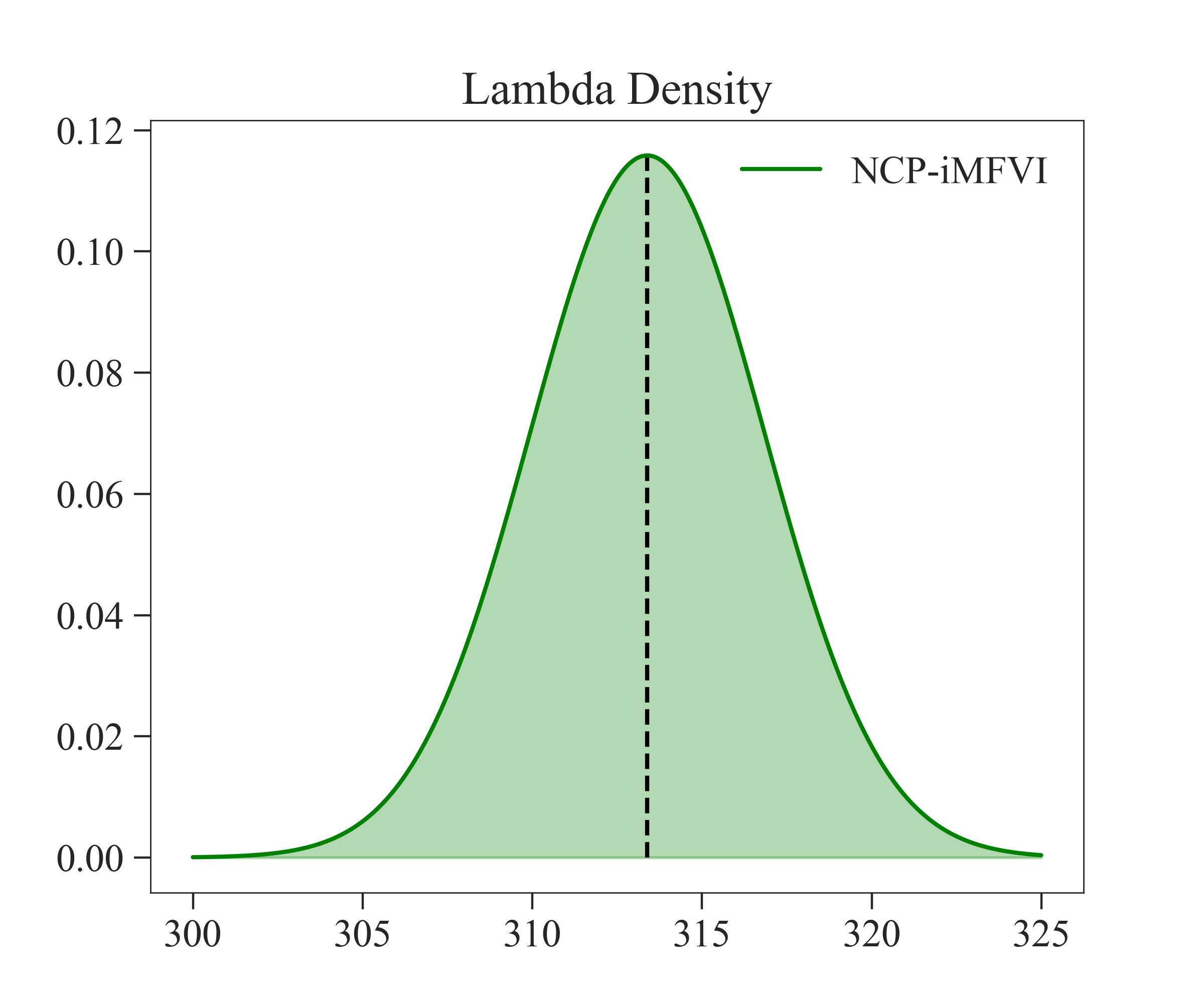}}
	\subfloat[Posterior density with $\mathcal{N}(0, 4\mathcal{C}_0)$]{
		\includegraphics[ keepaspectratio=true, width=0.35\textwidth, clip=true, trim=5pt 5pt 5pt 5pt]{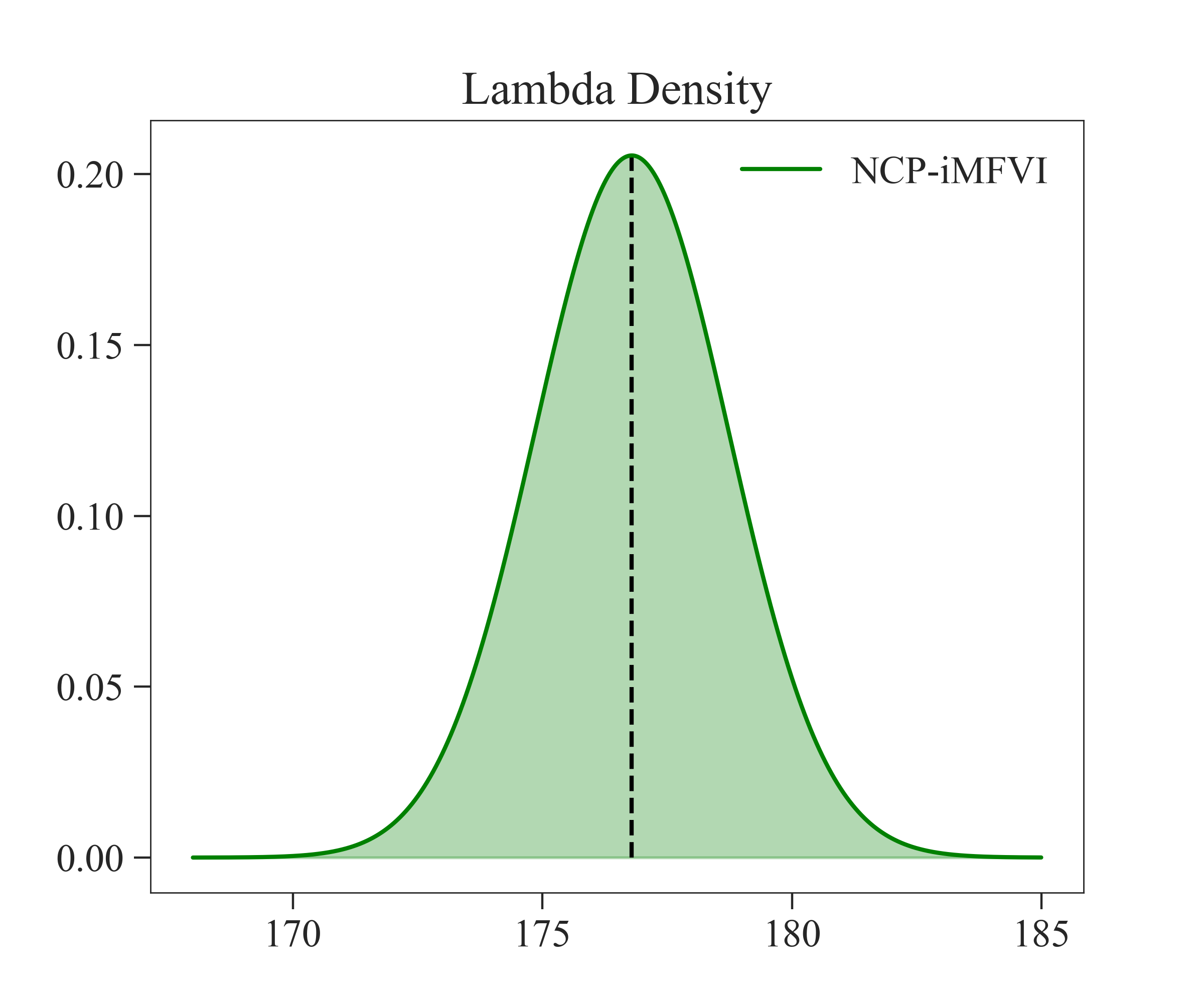}}

	\caption{\emph{\small { Comparison of posterior measures of $\lambda$ calculated by difference prior measures of $v$. (a): the posterior measure of $\lambda$ calculated by prior measure $\mu^v_0 = \mathcal{N}(0, \mathcal{C}_0)$; (b): the posterior measure of $\lambda$ calculated by prior measure $\mu^v_0 = \mathcal{N}(0, 4\mathcal{C}_0)$. }}}
	\label{fig:lamhalf}
	
\end{figure}

\noindent \textbf{Mesh independence}:

At last, we illustrate the mesh independence of the NCP-iMFVI method, as expected for the ``Bayesianize-then-discretize'' approach.
We define the step norm in the $L^2$-norm as follows:
\begin{align*}
	\text{the k-th step norm} = \lVert u_{k+1} - u_{k} \rVert^2 / \lVert u_{k} \rVert^2.
\end{align*}
In sub-figure (a) of Figure $\ref{fig:inde}$, we draw the step norms computed by the ``Discretize-then-Bayesianize'' approach with different discretized dimensions, and we see that this approach can hardly keep the infinite-dimensional natural.
Readers can find more details about this approach in \cite{jin2010hierarchical}.
We can see that the step norms decay rapidly when the dimension grows.
This indicates that the convergence speed depends highly on the discretized dimension.
On the contrary, the logarithm curves of step norms under the NCP-iMFVI method are almost the same for different discretizations, as shown in sub-figure (b) of Figure $\ref{fig:inde}$.
Moreover, we show the estimated posterior mean and covariance of hyper-parameter $\lambda$ obtained by each discrete level $ n = \lbrace 100, 300, 500, 700, 900 \rbrace$ in Table $\ref{table:lambda}$.
The estimated posterior means of $\lambda$ in all discretized dimensions are around $313$.
Under different discrete levels, the estimated posterior densities of $\lambda$ are quantitatively similar to each other.
Thus we say that the iteration process is not affected by discretized dimensions, demonstrating that the NCP-iMFVI method has mesh independence property.
}

\begin{figure}
	\centering
	\subfloat[Step norms(Logarithm)]{
		\includegraphics[keepaspectratio=true, width=0.35\textwidth, clip=true ]{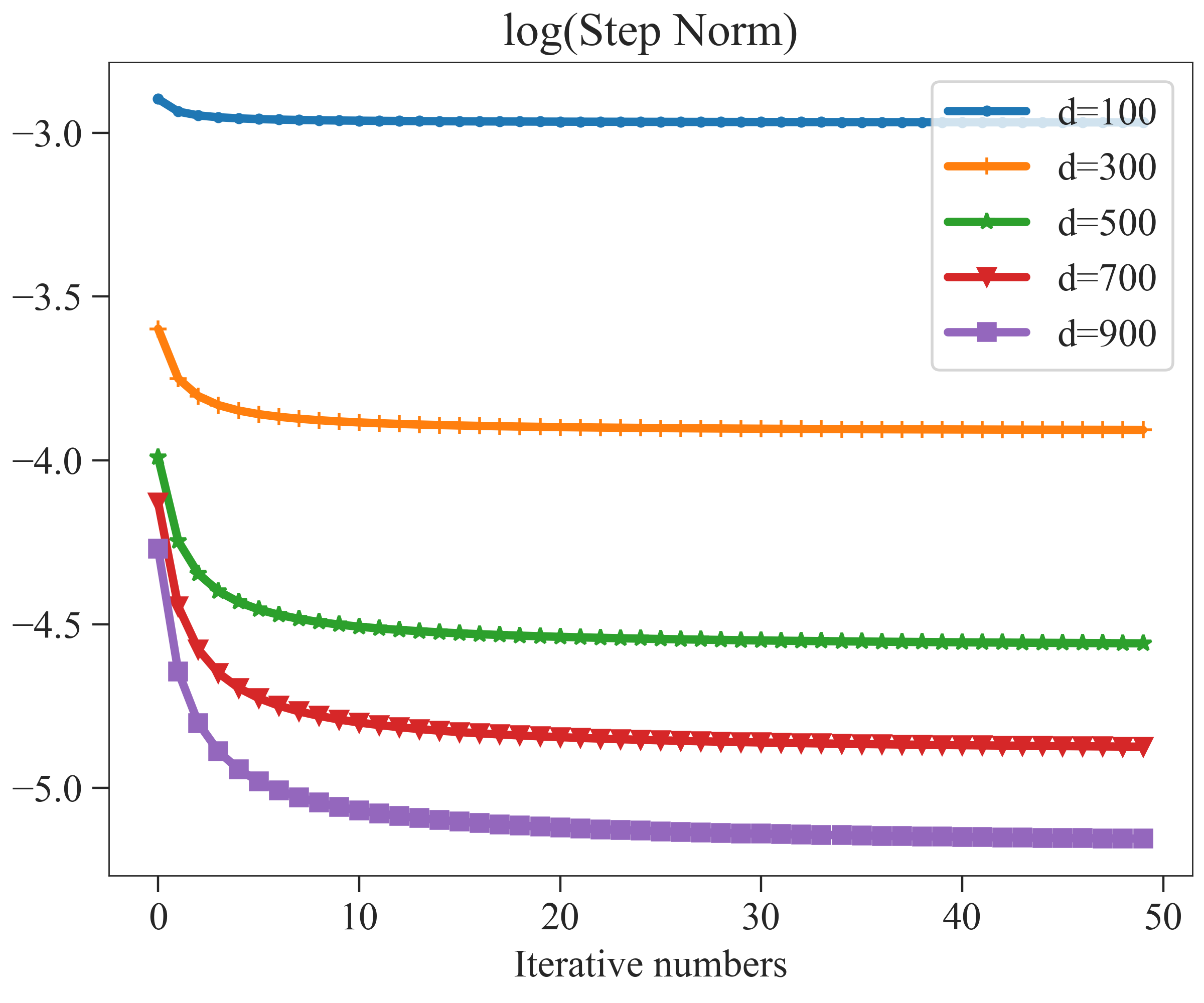}} 
	\subfloat[Step norms(Logarithm)]{
		\includegraphics[ keepaspectratio=true, width=0.35 \textwidth, clip=true]{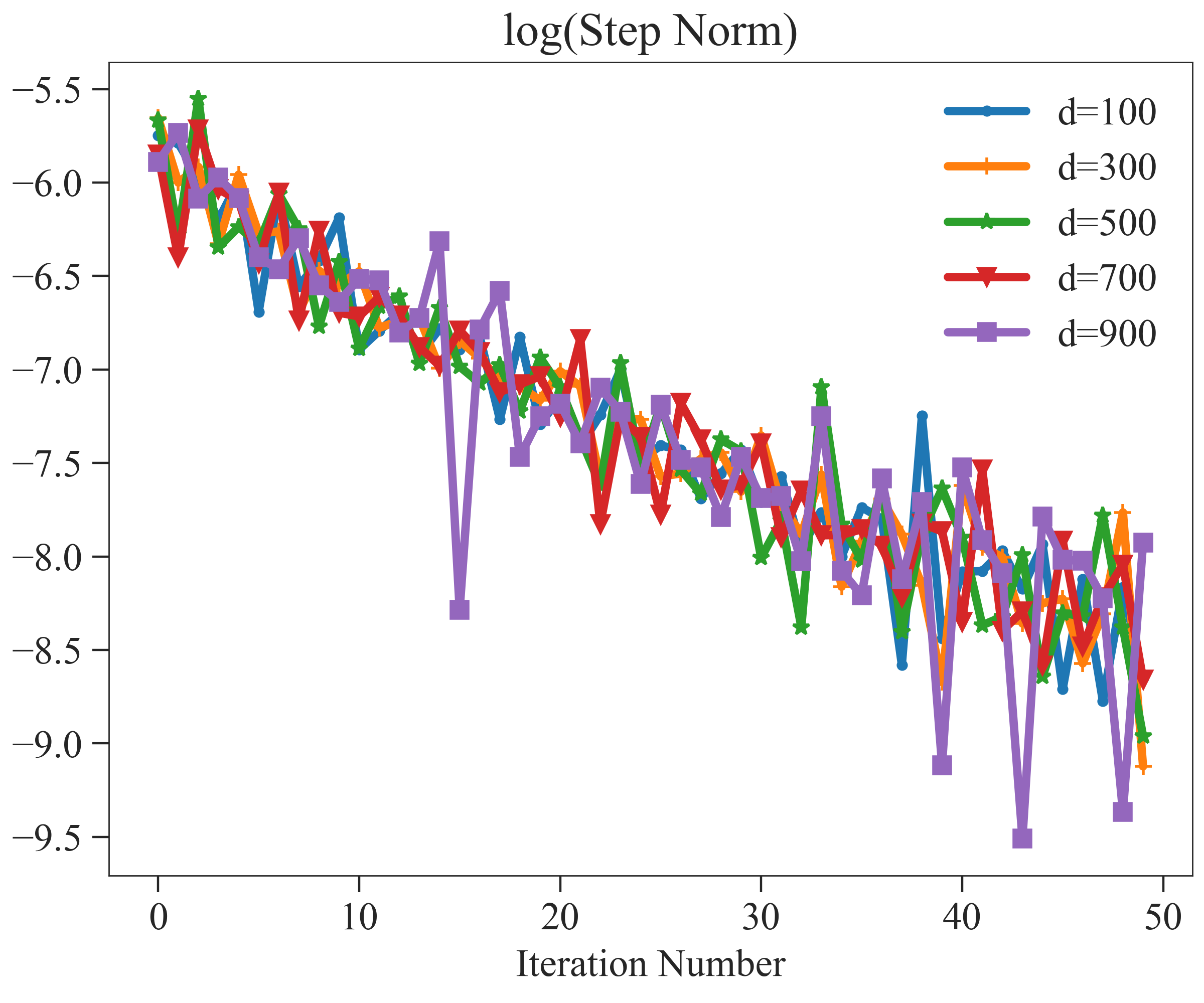}} \\

	\caption{\emph{\small 
			(a): Logarithm of the step norms computed by `` Discretize-then-Bayesianize'' approach with different discretized dimensions $n = \lbrace 100, 300, 500, 700 ,900 \rbrace$;
			(b): Logarithm of the step norms computed by NCP-iMFVI method with different discretized dimensions $n = \lbrace 100, 300, 500, 700 ,900 \rbrace$.}}		
	
	\label{fig:inde}
	
\end{figure}

~\
\subsection{Inverse source problem of Helmholtz equation}\label{subsec3.2}

\subsubsection{Basic settings}
The inverse source problem we studied in this section is borrowed from \cite{Bao_2010, Bao_2015}, which determines the unknown current density function from measurements of the radiates fields at multiple wavenumbers.
Let us consider the two-dimensional Helmholtz equation:
\begin{align*}
 \Delta w + \kappa ^2 w &= u \quad \text{in} \ \mathbb{R}^2, \\
 \partial_r w - i\kappa w &= o(r^{-1/2}) \quad \text{as} \ r=\lvert x \rvert \rightarrow \infty,
\end{align*}
where $\kappa$ is the wavenumber, $w$ is the acoustic field, and $u$ is the source supported in a bounded domain $\Omega = (0, 1)^2$. 
For this two-dimensional case, to simulate the problem defined on $\mathbb{R}^2$, we use the uniaxial perfectly matched layer (PML) technique to truncate the whole space $\mathbb{R}^2$ into a bounded domain. 
Denoting $\bm{x} = (x, y) \in \mathbb{R}^2$, let $D$ be a rectangle containing $\Omega$ and let $d_1, d_2$ be the thickness of the PML layers along the $x$ and $y$ coordinates, respectively. 
Denote by $\partial D$ the boundary of the domain $D$. Let $s_1(x) = 1 + i\sigma_1(x)$ and $s_2(y) = 1 + i\sigma_2(y)$ be the model medium property, where $\sigma_j$ are the positive continuous even functions and satisfy $\sigma_j(x) = 0$ in $\Omega$. 
Readers can seek more details about PML in \cite{Bao_2010, jia2019recursive}.
Following the general idea in designing PML absorbing layers, we may deduce the truncated PML problem: find the PML solution from the following system
\begin{align}
	\begin{split}
		\nabla \cdot (s\nabla w) + \kappa ^2s_1s_2w &= u \quad \text{in} \ D, \\
		w &= 0 \quad \text{on} \ \partial D,
	\end{split}
\end{align}
where $s = \diag(s_2(y) / s_1(x), s_1(x) / s_2(y))$ is a diagonal matrix. The forward operator related to $\kappa$ is defined by the Helmholtz equation $H_{\kappa}(u) = (w(\bm{x}_1), \cdots, w(\bm{x}_{N_d}))^T$ with $\lbrace \bm{x}_i \rbrace ^{N_d}_{i=1} \in \partial \Omega$ and $u \in \mathcal{H}_u := L^2(\Omega)$. 
Since we consider the multi-frequency case, i.e., a series of wavenumbers $0 < \kappa_1 < \kappa_2 < \cdots < \kappa_{N} < \infty$ are considered, then the forward operator has the following form:
\begin{align}
 Hu = (H_{\kappa_1}(u), \cdots, H_{\kappa_N}(u)) \in \mathbb{R}^{N_d \times N}.
\end{align}
Similar to the simple smooth model, with these notations, we can write an abstract form of this problem:
\begin{align}
 \bm{d} = Hu + \bm{\epsilon},
\end{align}
where $\bm{\epsilon} \sim \mathcal{N}(0, \bm{\Gamma}_{\text{noise}})$ is the random Gaussian noise.
Parameter $v$ and hyper-parameter $\lambda$ are generated by the Gaussian measures $\mathcal{N}(0, \mathcal{C}_0)$ and $\mathcal{N}(\bar{\lambda}, \sigma)$, where{ $\bar{\lambda} = 1, \sigma = 100$}, respectively.

\begin{figure}
	\centering
	
	\subfloat[Background truth]{
		\includegraphics[ keepaspectratio=true, width=0.32\textwidth, clip=true]{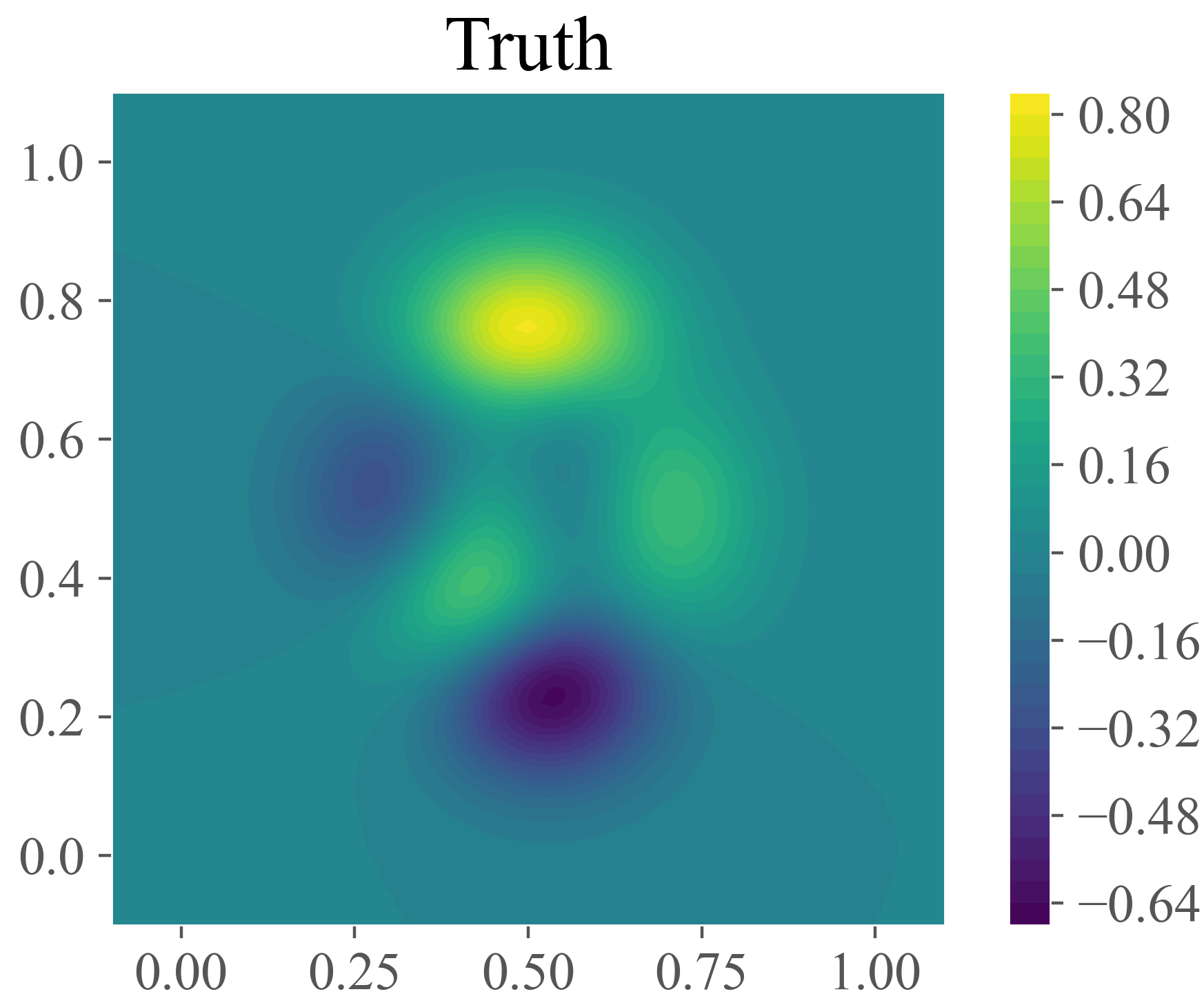}} 
	\subfloat[Estimated mean]{
		\includegraphics[ keepaspectratio=true, width=0.32\textwidth, clip=true]{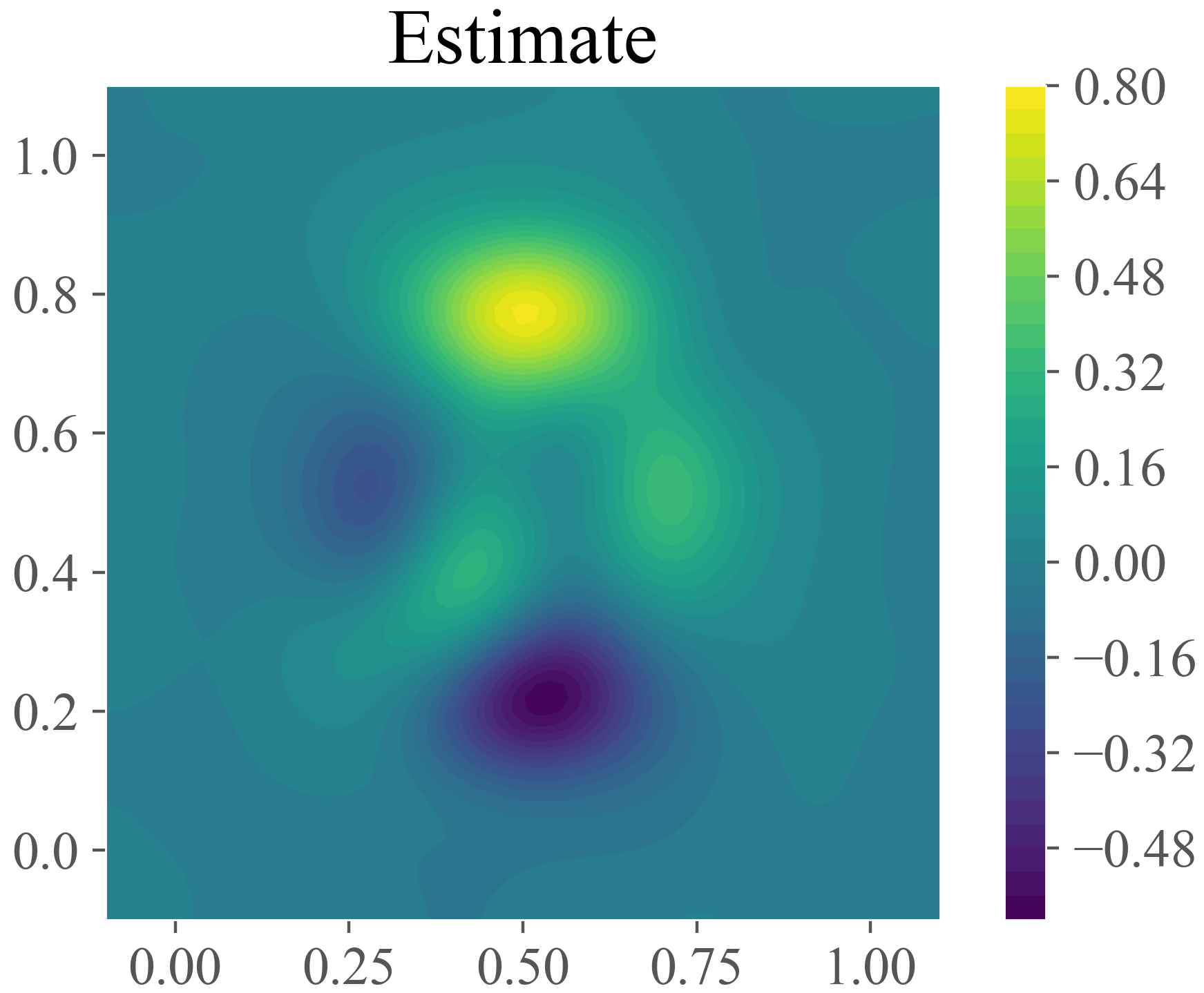}} 
	\subfloat[Estimated variance]{
		\includegraphics[ keepaspectratio=true, width=0.32\textwidth, clip=true]{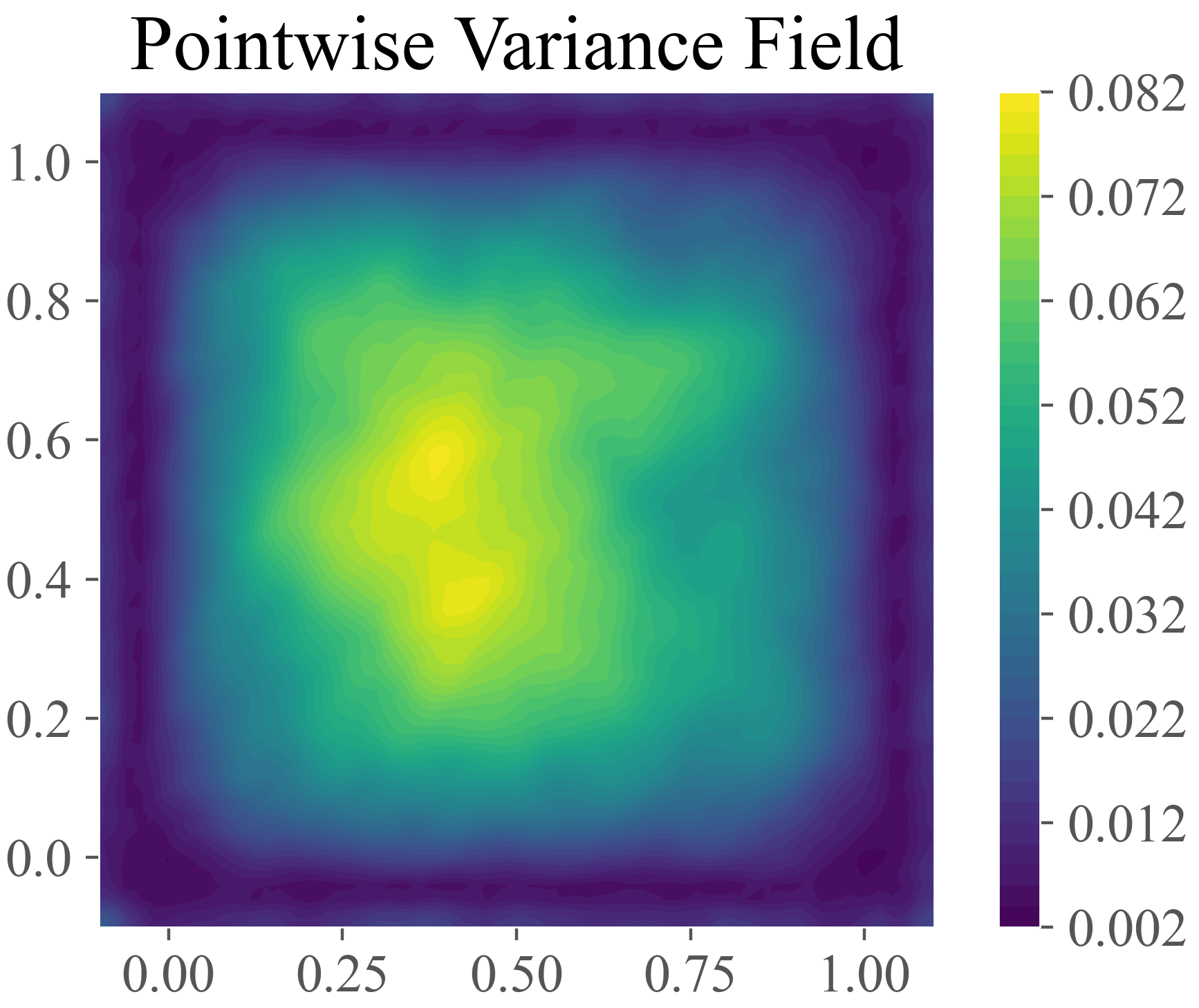}}
	
	\caption{\emph{\small (a): The background truth of $u$; (b): The estimated posterior mean function of $u$ obtained by NCP-iMFVI; (c): The estimated point-wise variance field of posterior measure of $u$ obtained by NCP-iMFVI. }}
	
	\label{fig:Helmcomparison}
	
\end{figure}

For clarity, we list the specific choices for some parameters introduced in this subsection as follows: 
\begin{itemize}
	\item {Assume that 5$\%$ random Gaussian noise $\epsilon \sim \mathcal{N}(0, \bm{\Gamma}_{\text{noise}})$ is added, where $\bm{\Gamma}_{\text{noise}} = \tau^{-1}\textbf{I}$, and $\tau^{-1} = (0.05\max(\lvert Hu\rvert))^2$.}
	\item We assume that the data produced from the underlying true signal
	\begin{align*}
		u^{\dagger} = \ &0.3(4-6x_1)^2u_0(x_1, x_2) - 0.03u_0(x_1, x_2) \\
		&- ((1.2x_1-0.6)-(6x_1-3)^3-(6x_2-3)^5)u_0(x_1, x_2),
	\end{align*}
	where $u_0(x_1, x_2) = \exp(-(6x_1-3)^2 - (6x_2-2)^2)$.
	\item The operator $\mathcal{C}_0$ is given by $\mathcal{C}_0 = (\text{I} - \alpha\Delta)^{-2}$, where $\alpha = 0.05$ is a fixed constant. 
	Here the Laplace operator is defined on $\Omega$ with zero Neumann boundary condition. 
	\item Let domain $\Omega$ be a bounded area $(0, 1)^2$. 
	And the available data is taken on the boundary uniformly for $80$ points.
	\item The wavenumber series are specified with { $\kappa_j = 0.5 \pi j (j = 1, 2, \cdots ,20)$}, and the thickness of PML layers is set as $d_1 = d_2 = 0.1$.
	\item In order to avoid the inverse crime, a fine mesh with the number of grid points equal to $500 \times 500$ is employed for generating the data. 
	For the inverse stage, the mesh with the number of grid points equal to { $60 \times 60$} is employed as the wave numbers are below 30 according to \cite{wong2011exact}.
\end{itemize}

\begin{remark}\label{remark3}
	\itshape
		To employ sampling-type methods such as the MCMC method, researchers carefully parameterize the unknown source function to reduce the dimension, e.g., assume that the sources are point sources, then parameterize the source function by numbers, locations, and amplitudes. 
		If we employ MCMC algorithm \cite{cotter2013, feng2018adaptive} in our settings, the computational complexity is unacceptable for two reasons: calculation with many wavenumbers is needed for multi-frequency problems, and a large number of samples need to be generated for each wavenumber. 
		Similar to \cite{jia2021variational}, we could hardly compare our NCP-iMFVI method with the MCMC sampling method in this case.
\end{remark}

\subsubsection{Numerical results}

\begin{figure}
	\centering
	
	\subfloat[Credibility region $\text{I}$]{
		\includegraphics[ keepaspectratio=true, width=0.32\textwidth, clip=true]{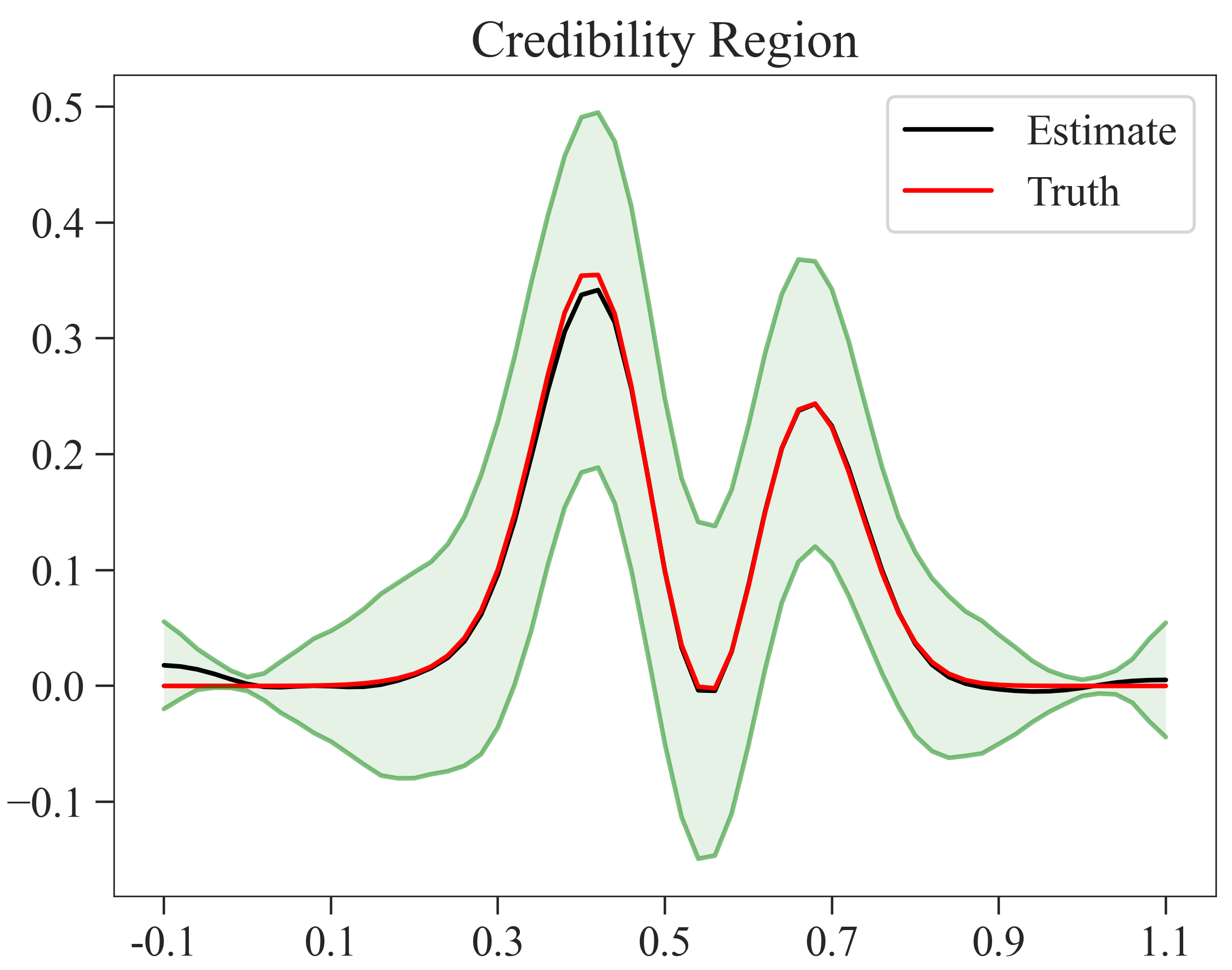}} 
	\subfloat[Credibility region $\text{II}$]{
		\includegraphics[ keepaspectratio=true, width=0.32\textwidth, clip=true]{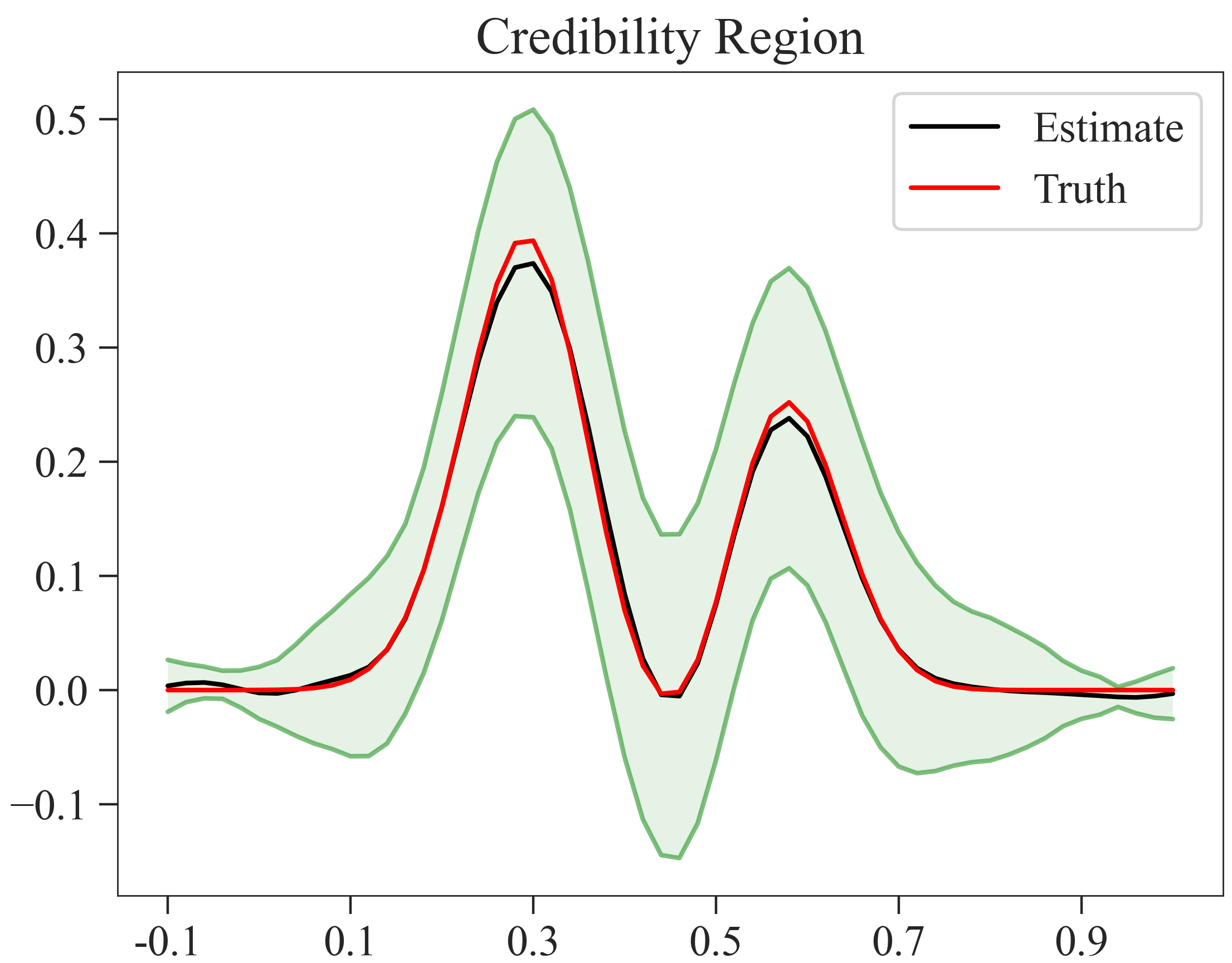}} 
	\subfloat[Credibility region $\text{III}$]{
		\includegraphics[ keepaspectratio=true, width=0.31\textwidth, clip=true]{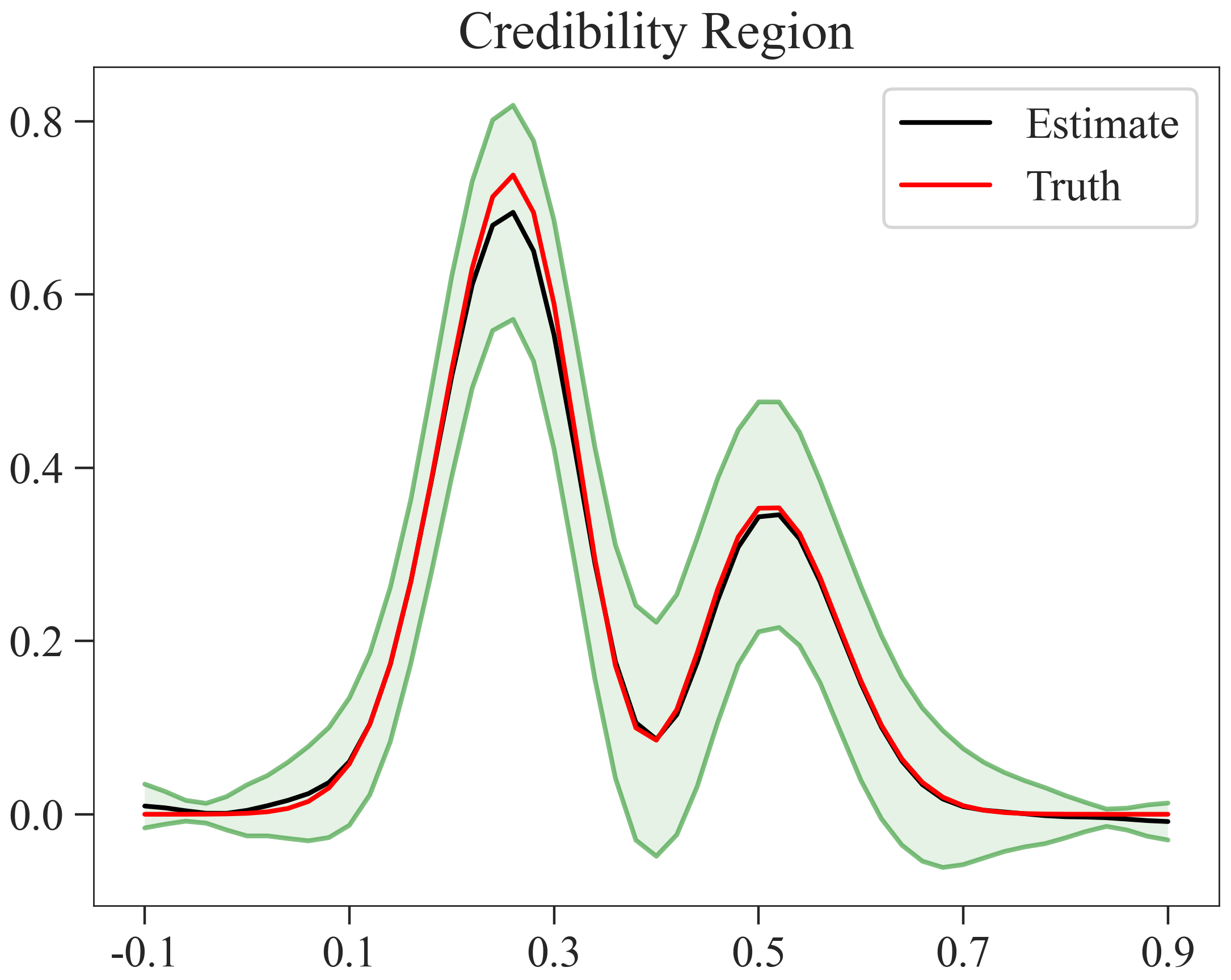}} 
	
	\caption{\emph{\small {
				The $95 \%$ credibility region of the estimated posterior mean function represented by the green shade area.
				(a): The black line represents the estimated mean $\lbrace u(x_i, x_i) \rbrace^{60}_{i=1}$, and red line represents the true function $\lbrace u^{\dagger}(x_i, x_i) \rbrace^{60}_{i=1}$.
				(b): The black line represents the estimated mean $\lbrace u(x_i, x_{i+5}) \rbrace^{55}_{i=1}$, and red line represents the true function $\lbrace u^{\dagger}(x_i, x_{i+5}) \rbrace^{55}_{i=1}$.
				(c): The black line represents the estimated mean $\lbrace u(x_i, x_{i+10}) \rbrace^{50}_{i=1}$, and red line represents the true function $\lbrace u^{\dagger}(x_i, x_{i+10}) \rbrace^{50}_{i=1}$. }}}
	
	\label{fig:Helmvar}
	
\end{figure}

{
\noindent \textbf{Discussion of $u$}:

Firstly, in the sub-figures (a) and (b) of Figure $\ref{fig:Helmcomparison}$, we show the background truth and the estimated posterior mean function of $u$.
The estimated posterior mean function of $u$ is visually similar to the true function.
In sub-figure (a) of Figure $\ref{fig:Helmrelative}$, we draw the relative error curve calculated in $L^2$-norm, defined in $(\ref{equ:relative})$.
The relative error curve illustrates that the convergence speed is fast since the descending trend is rapid at first $20$ steps, and the relative error is stable around $0.6 \%$ at the end of the iteration steps.
This provides quantitative evidence that the estimated posterior mean function is similar to the background truth of $u$.
As a result, combining the visual (sub-figures (a), (b) of Figure $\ref{fig:Helmcomparison}$) and quantitative (relative errors shown in sub-figure (a) of Figure $\ref{fig:Helmrelative}$) evidence, we say that the NCP-iMFVI method provides an estimated posterior mean function of the parameter $u$ which is similar to the background truth.

Secondly, we provide some discussions of the estimated posterior covariance functions about the parameter $u$.
In sub-figure (c) of Figure $\ref{fig:Helmcomparison}$, we draw the point-wise variance field of the posterior measure $u$.
To provide detailed evidence, we draw the comparisons of the estimated mean function and back-ground truth calculated by different mesh points in Figure $\ref{fig:Helmvar}$, which are given by $\lbrace (x_i, x_i) \rbrace^{n}_{i=1}$, $\lbrace (x_i, x_{i+5}) \rbrace^{n-5}_{i=1}$, and $\lbrace (x_i, x_{i+10}) \rbrace^{n-10}_{i=1}$($n = 60$ according to the mesh size $= 60 \times 60$).
And the green shade area represents the $95 \%$ credibility region according to estimated posterior mean function.
In the sub-figures (a), (b), and (c) of Figure $\ref{fig:Helmvar}$, the black lines represent the estimated mean $\lbrace u(x_i, x_i) \rbrace^{60}_{i=1}$, $\lbrace u(x_i, x_{i+5}) \rbrace^{55}_{i=1}$, and $\lbrace u(x_i, x_{i+10}) \rbrace^{50}_{i=1}$, respectively.
While red lines represent the true function $\lbrace u^{\dagger}(x_i, x_i) \rbrace^{60}_{i=1}$, $\lbrace u^{\dagger}(x_i, x_{i+5}) \rbrace^{55}_{i=1}$, and $\lbrace u^{\dagger}(x_i, x_{i+10}) \rbrace^{50}_{i=1}$, respectively.
We see that the credibility region contains background truth, which indicates that the Bayesian setup is meaningful and in accordance with the frequentist theoretical investigations of the posterior consistency \cite{wang2019frequentist, zhang2020convergence}. 
Combining with the variance shown in sub-figure (c) of Figure $\ref{fig:Helmcomparison}$, we say that the NCP-iMFVI method quantifies the uncertainties of the parameter $u$.

\noindent \textbf{Discussion of $\lambda$}:

In sub-figure (b) of Figure $\ref{fig:Helmrelative}$, we draw the step values of $\lambda$.
At the beginning of the iteration process, the descending trend is rapid, which is similar to the trend of the relative errors shown in sub-figure (a) of Figure $\ref{fig:Helmrelative}$.
During the iteration process, we see that the descending trend of $\lambda$ gradually becomes gentle.
Recalling the stop criteria (Step 6) of Algorithm $\ref{alg A}$, the step error of $\lambda$ is small when the iteration process is stopped.
This indicates that the hyper-parameter $\lambda$ is converged.
$\lambda$ is finally converged to $40.014$ within $50$ steps based on sub-figure (b) of Figure $\ref{fig:Helmrelative}$.

\noindent \textbf{Mesh independence}:

At last, we illustrate the mesh independence of the NCP-iMFVI method, as expected for the ``Bayesianize-then-discretize'' approach.
We show the step norm curves obtained by each discrete level $ n = \lbrace 3600, 4225, 4900, 5625, 6400 \rbrace$ in sub-figure (c) of Figure $\ref{fig:Helmrelative}$.
Although there are differences between each step norm curves, they perform the same descending trend for the different discretized dimensions.
Thus we say that the convergence speed is not affected by discretized dimensions, demonstrating that the NCP-iMFVI method has mesh independence property.
}

\begin{figure}
	\centering
	
	\subfloat[Relative errors]{
		\includegraphics[ keepaspectratio=true, width=0.324\textwidth, clip=true]{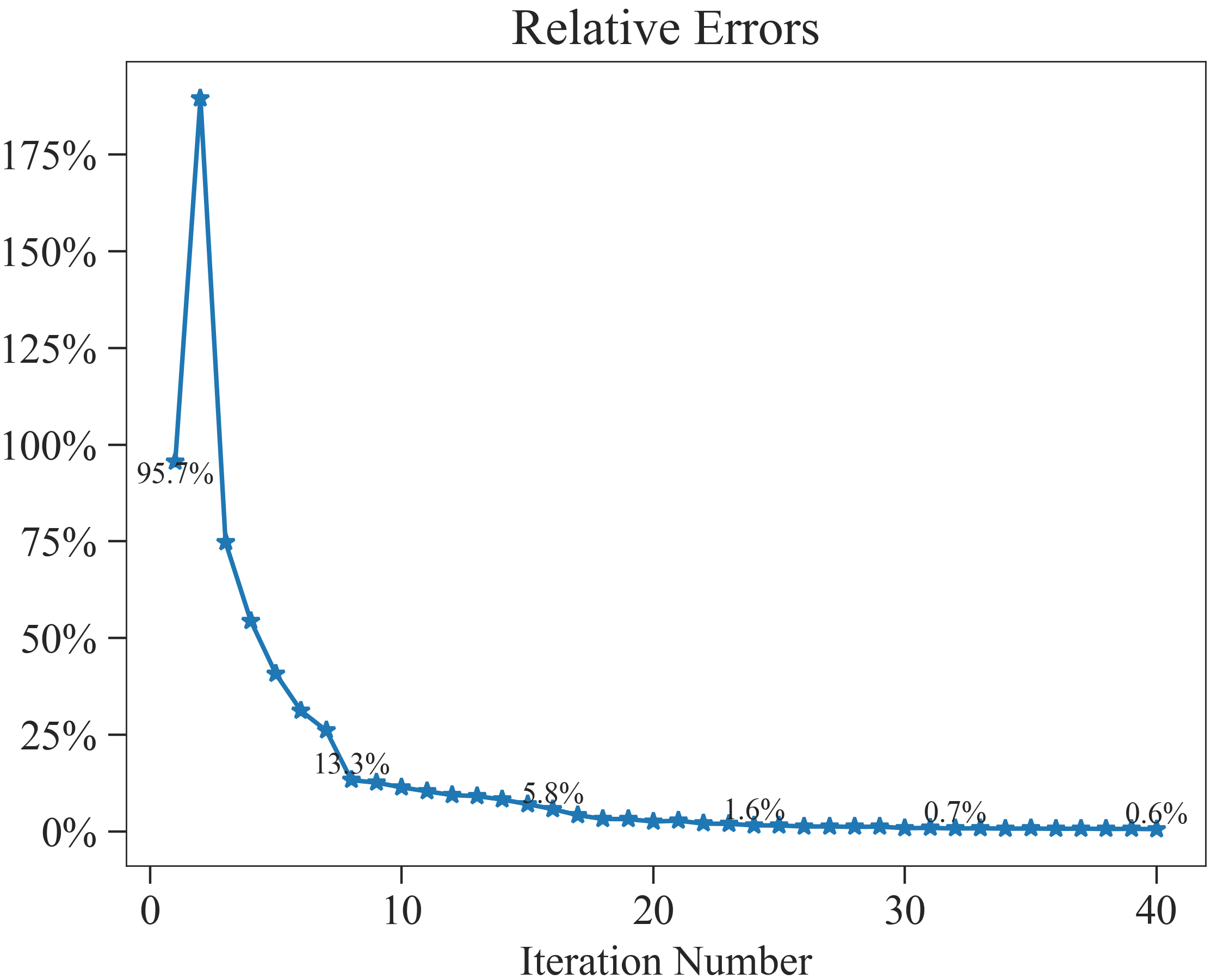}} 
	\subfloat[Step values of $\lambda$]{
		\includegraphics[ keepaspectratio=true, width=0.3095\textwidth, clip=true]{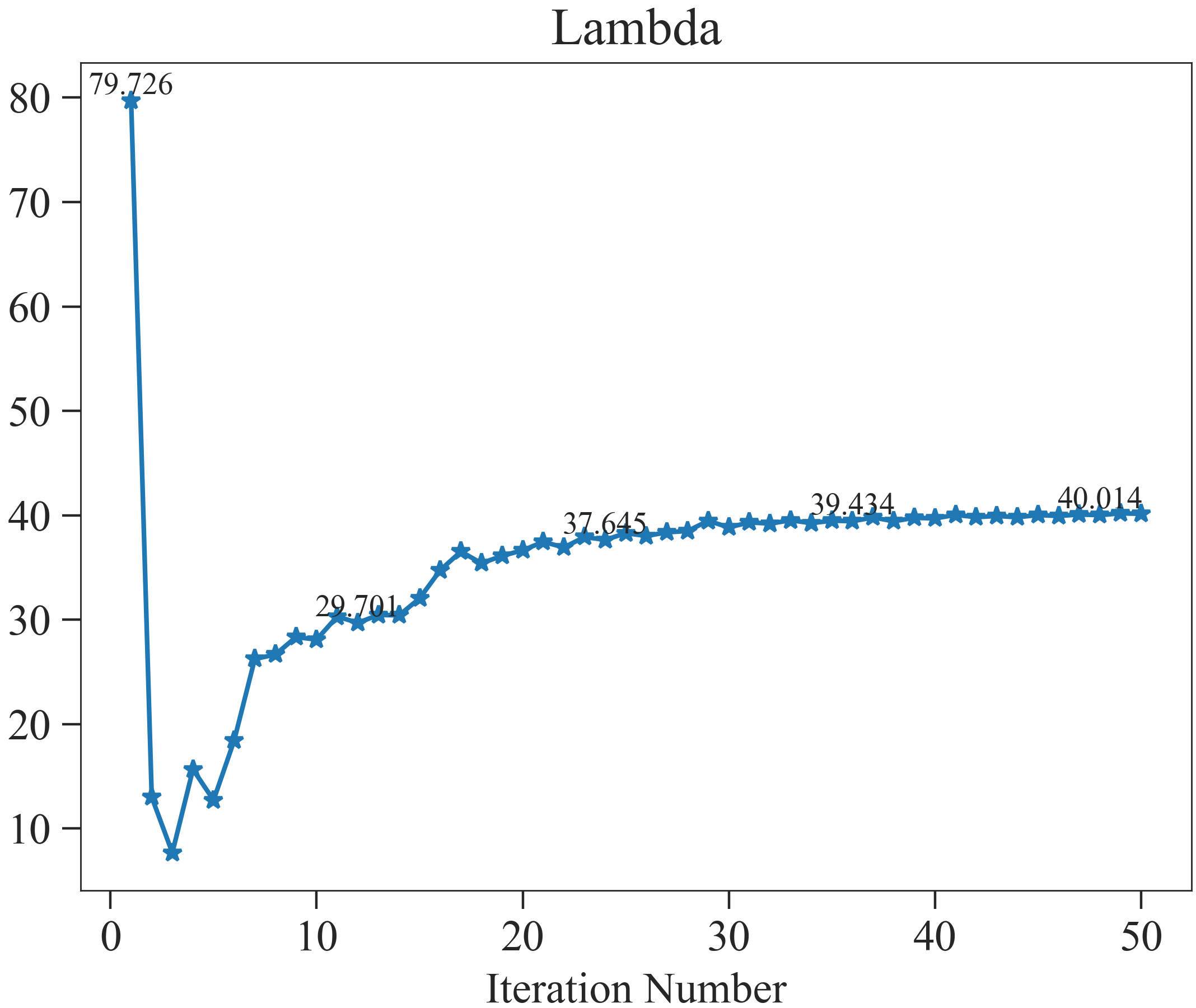}}
	\subfloat[Step norms]{
		\includegraphics[ keepaspectratio=true, width=0.313\textwidth, clip=true, trim=3pt 4pt 2pt 0pt]{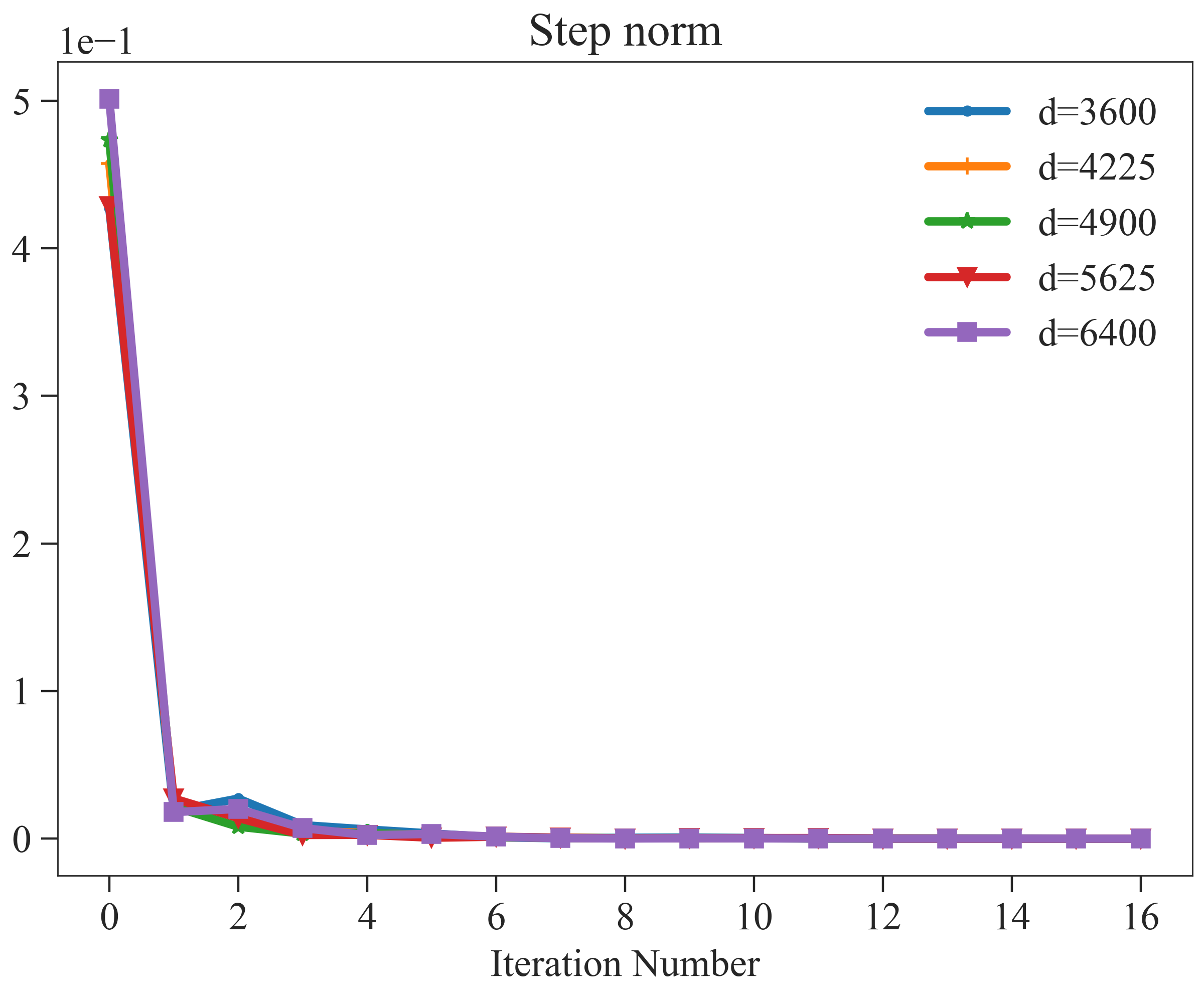}} 
	
	\caption{\emph{\small (a): Relative error of the estimated posterior means in $L^2$-norm under the mesh size { $60 \times 60$}; (b): The step values of $\lambda$ obtained by NCP-iMFVI; (c): Logarithm of the step norms computed by NCP-iMFVI method with different discretized dimensions $n = \lbrace3600, 4225, 4900, 5625, 6400 \rbrace$. }}
	
	\label{fig:Helmrelative}
	
\end{figure}

\subsection{Non-linear inverse problem with steady-state Darcy flow equation}\label{subsec3.3}
\subsubsection{Basic settings}
In this section, we concentrate on the inverse problem of estimating the permeability distribution in a porous medium from a discrete set of pressure measurements, studied in \cite{calvetti2018iterative}. Consider the following steady-state Darcy flow equation:
\begin{align}
\begin{split}
 -\nabla \cdot (e^u\nabla w) &= f \quad x \in \Omega, \\
 w &= 0 \quad x \in \partial \Omega,
\end{split}
\end{align}
where $f \in H^{-1}(\Omega)$ is the source function,  $u \in \mathcal{X} := L^{\infty}(\Omega)$ called log-permeability for the computational area $\Omega = (0, 1)^2$, and denote $\bm{x} = (x, y) \in \mathbb{R}^2$. 
Then the forward operator has the following form:
\begin{align}
 Hu = (w(\bm{x}_1), w(\bm{x}_2), \cdots, w(\bm{x}_{N_d}))^T,
\end{align}
where $x_i \in \Omega$ for $i = 1, \cdots, N_d$. 

\begin{figure}
	\centering
	
	\subfloat[Background truth]{
		\includegraphics[ keepaspectratio=true, width=0.32\textwidth, clip=true]{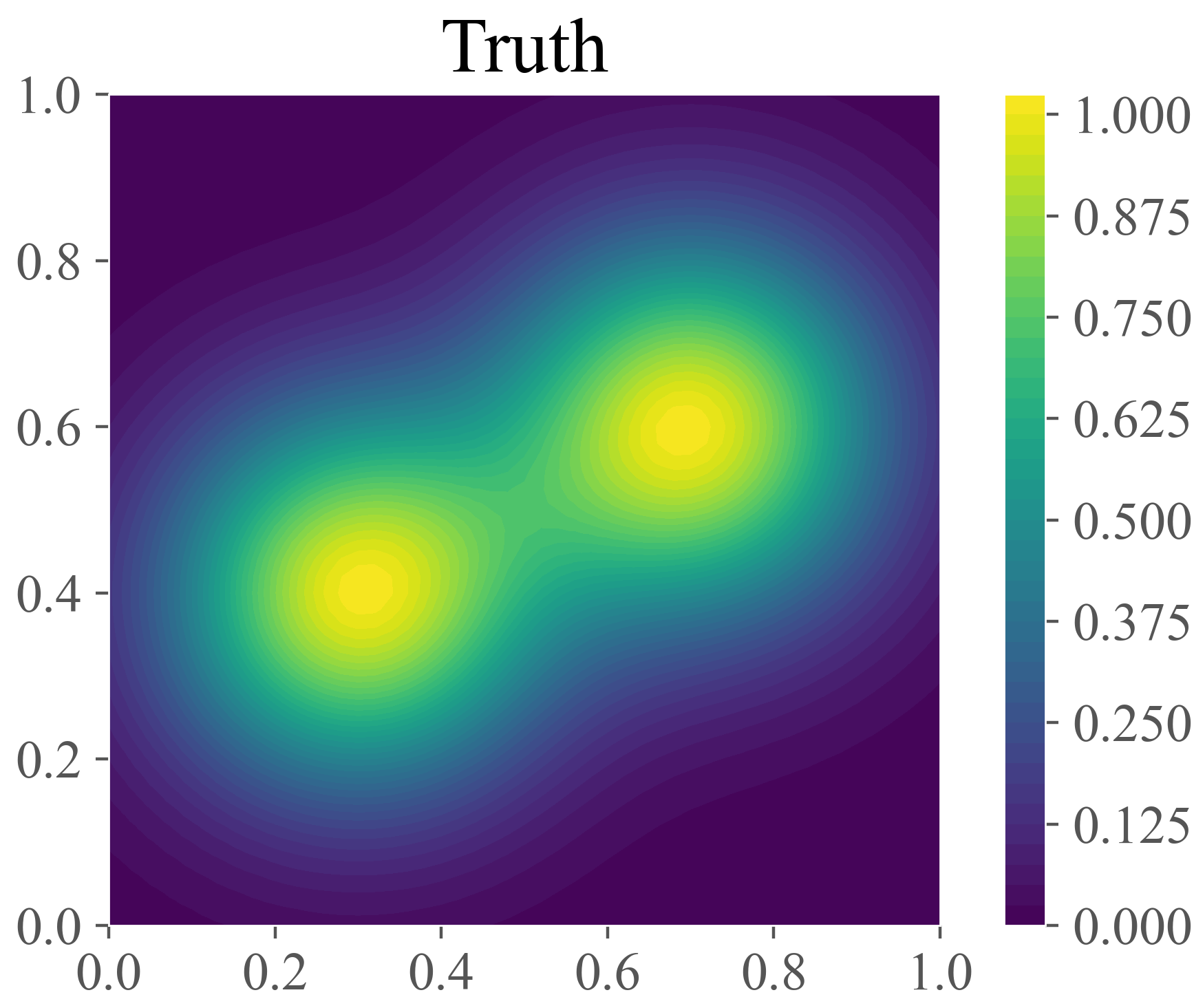}} 
	\subfloat[Estimated mean]{
		\includegraphics[ keepaspectratio=true, width=0.31\textwidth, clip=true]{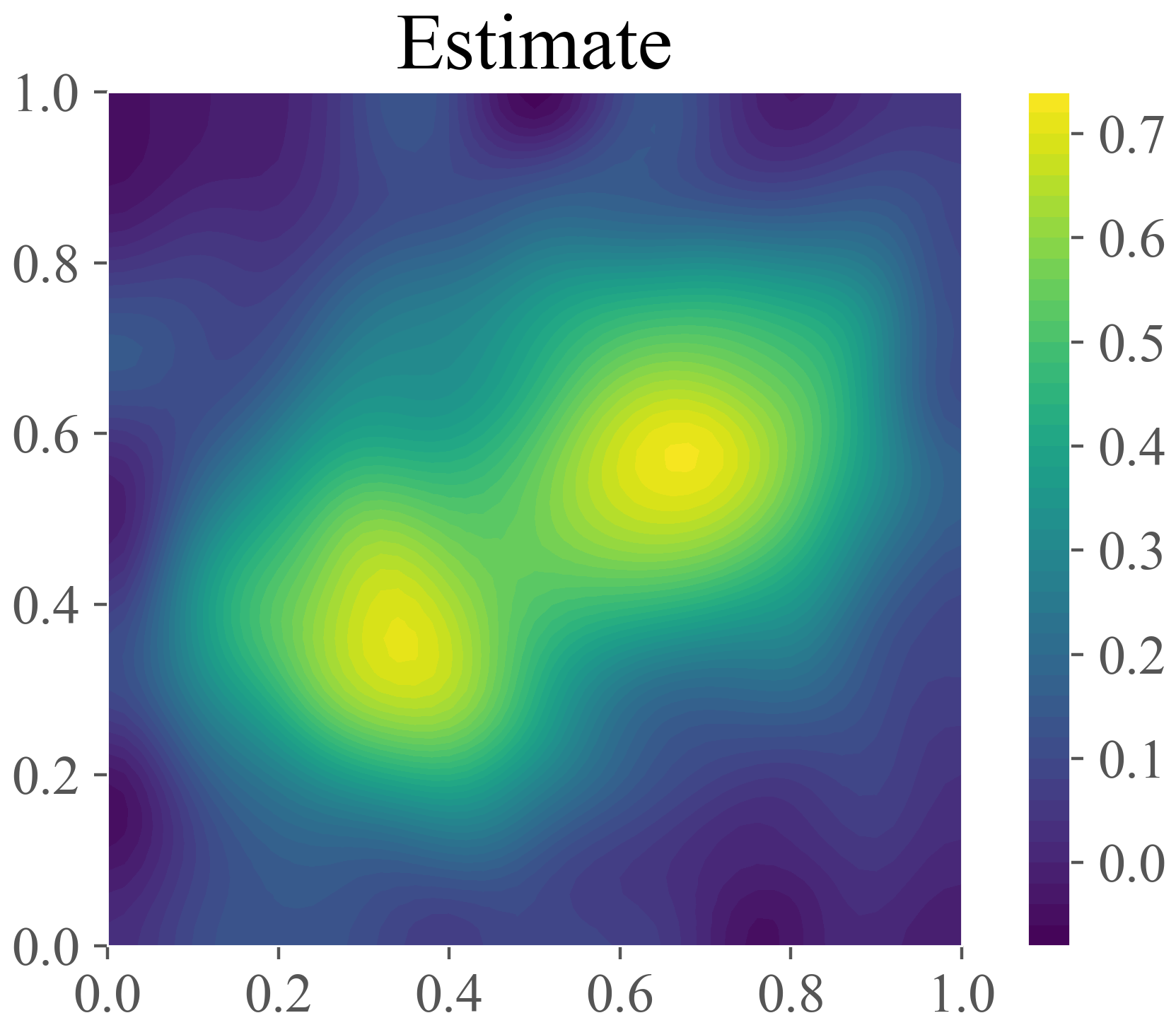}} 
	\subfloat[Estimated variance]{
		\includegraphics[ keepaspectratio=true, width=0.32\textwidth, clip=true]{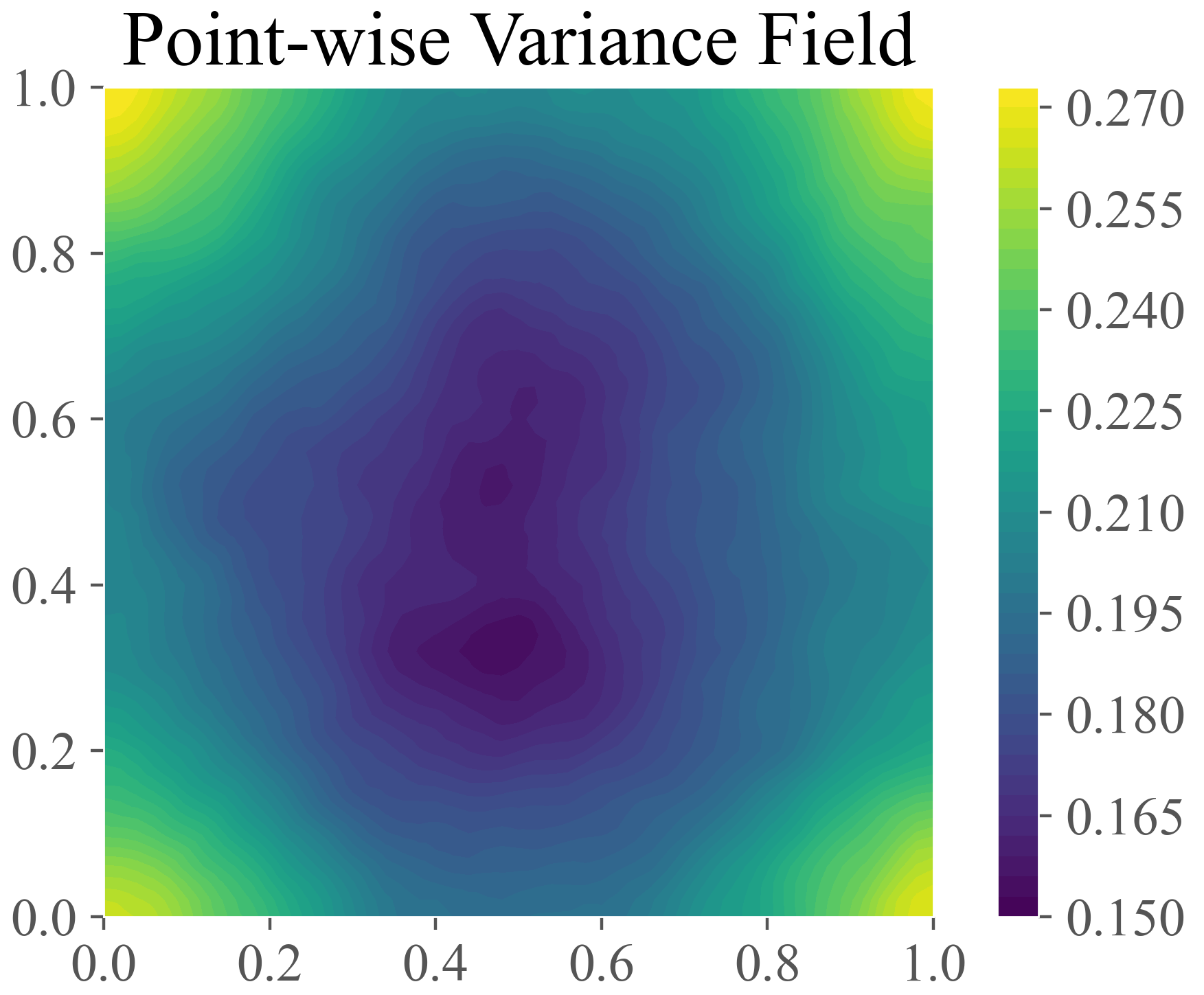}}
	
	\caption{\emph{\small (a): The background truth of $u$; (b): The estimated posterior mean function of $u$ obtained by NCP-iMFVI; (c): The estimated variance function of posterior measure of $u$ obtained by NCP-iMFVI.}}
	
	\label{fig:Darcycomparison}
	
\end{figure}

Let us assume that the operator $\mathcal{F}$ is Fr\'{e}chet differentiable, and $w = \mathcal{F}(u)$. 
{
We linearize $\mathcal{F}(u)$ around $u^{*}$ to obtain
\begin{align}
 w \approx \mathcal{F}(u^{*}) + F^{\prime}(u - u^{*}),
\end{align}
where $F^{\prime}$ is the Fr\'{e}chet derivative of $\mathcal{F}(u)$ evaluated at $u^{*}$. Consequently, we transform the non-linear problem into the linear form, and we are able to employ our NCP-iMFVI method. 

We now denote $\delta w := F^{\prime}(\delta u)$, $w_0 = \mathcal{F}(u_{\text{MAP}})$, and $\delta u := u - u^{*} = \lambda v$.
Then we obtain the linearized equation as follows:
\begin{align}
\begin{split}
 -\nabla \cdot (e^{u^{*}}\nabla w_0 \cdot \delta u) &= \nabla \cdot (e^{u^{*}} \nabla \delta w) \quad x \in \Omega, \\
 \delta u &= 0 \quad x \in \partial \Omega,
\end{split}
\end{align}
and the abstract form turns to 
\begin{align}
\begin{split}
 \bm{d} &= \mathcal{F}(u^{*}) + \delta w + \bm{\epsilon},
\end{split}
\end{align}
where $\bm{\epsilon} \sim \mathcal{N}(0, \bm{\Gamma}_{\text{noise}})$ is the random Gaussian noise.}
The parameters $v$ and $\lambda$ are generated by the Gaussian measures $\mathcal{N}(0, \mathcal{C}_0)$ and $\mathcal{N}(\bar{\lambda}, \sigma)$, where { $\bar{\lambda} = 1, \sigma = 100$}, respectively.
For clarity, we list the specific choices for some parameters introduced in this subsection as follows: 
\begin{itemize}
	\item {Assume that 5$\%$ random Gaussian noise $\epsilon \sim \mathcal{N}(0, \bm{\Gamma}_{\text{noise}})$ is added, where $\bm{\Gamma}_{\text{noise}} = \tau^{-1}\textbf{I}$, and $\tau^{-1} = (0.05\max(\lvert Hu\rvert))^2$.}
	\item We assume that the data produced from the underlying log-permeability
	\begin{align*}
		u^{\dagger} = \ &\exp(-20(x_1-0.3)^2 - 20(x_2-0.4)^2) \\
		&+ \exp(-20(x_1-0.7)^2 - 20(x_2-0.6)^2).
	\end{align*}
	\item { $\mathcal{F}(u)$ is linearized around $u^{*} = 0$}.
	\item Let domain $\Omega$ be a bounded area $(0, 1)^2$. The available data is discretized by the finite element method on a regular mesh with the number of grid points equal to $20 \times 20$.
	\item The operator $\mathcal{C}_0$ is given by $\mathcal{C}_0 = (\text{I} - \alpha\Delta)^{-2}$, where $\alpha = 0.05$ is a fixed constant. Here the Laplace operator is defined on $\Omega$ with zero Neumann boundary condition. 
	\item In order to avoid the inverse crime, a fine mesh with the number of grid points equal to $500 \times 500$ is employed for generating the data. For the inversion, a mesh with a number of grid points equal to {$50 \times 50$} is employed.
\end{itemize}

\begin{figure}
	\centering
	
	\subfloat[Credibility region $\text{I}$]{
		\includegraphics[ keepaspectratio=true, width=0.32\textwidth, clip=true, trim=24pt 18pt 30pt 22pt]{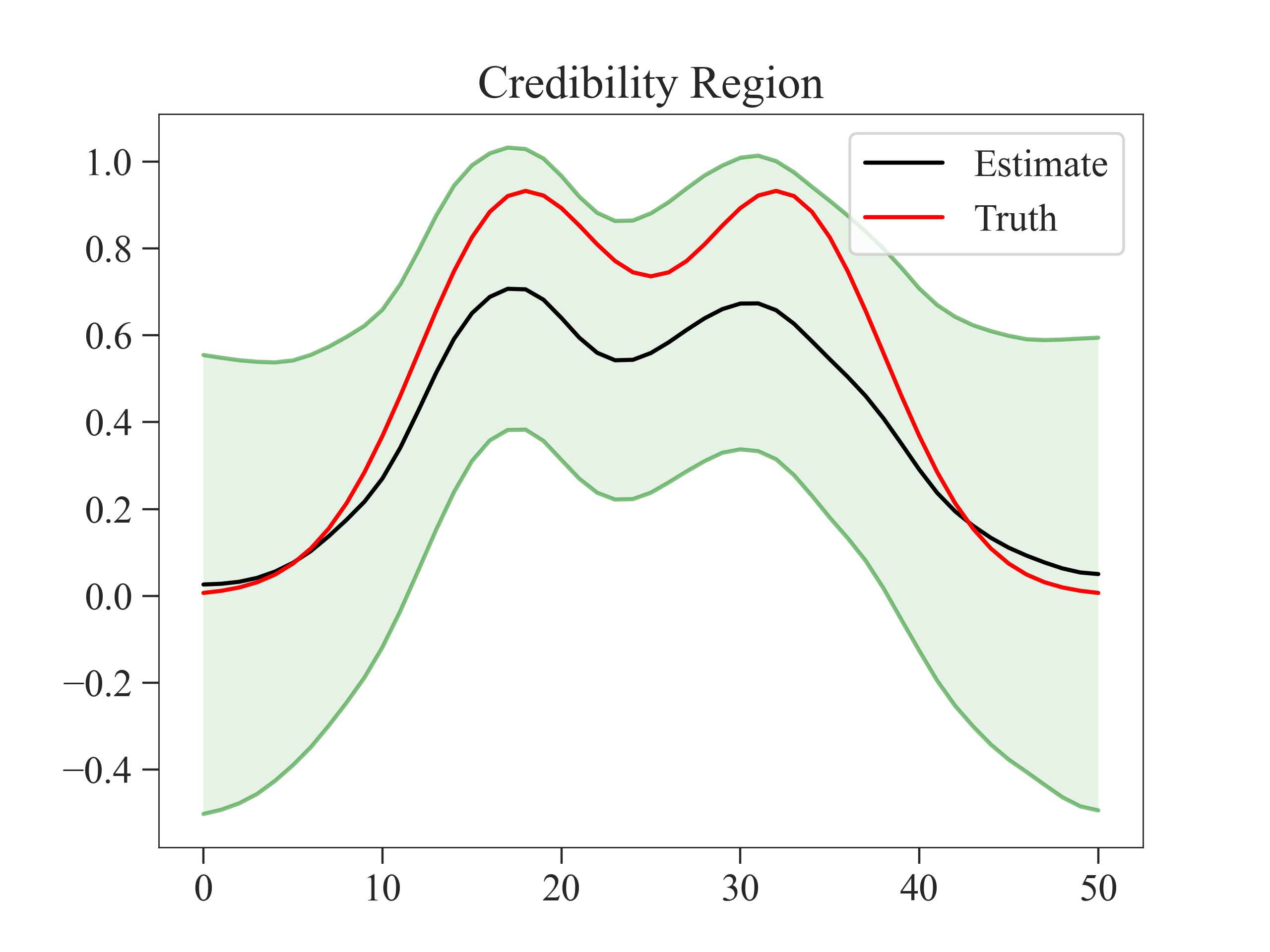}} 
	\subfloat[Credibility region $\text{II}$]{
		\includegraphics[ keepaspectratio=true, width=0.32\textwidth, clip=true, trim=24pt 18pt 30pt 22pt]{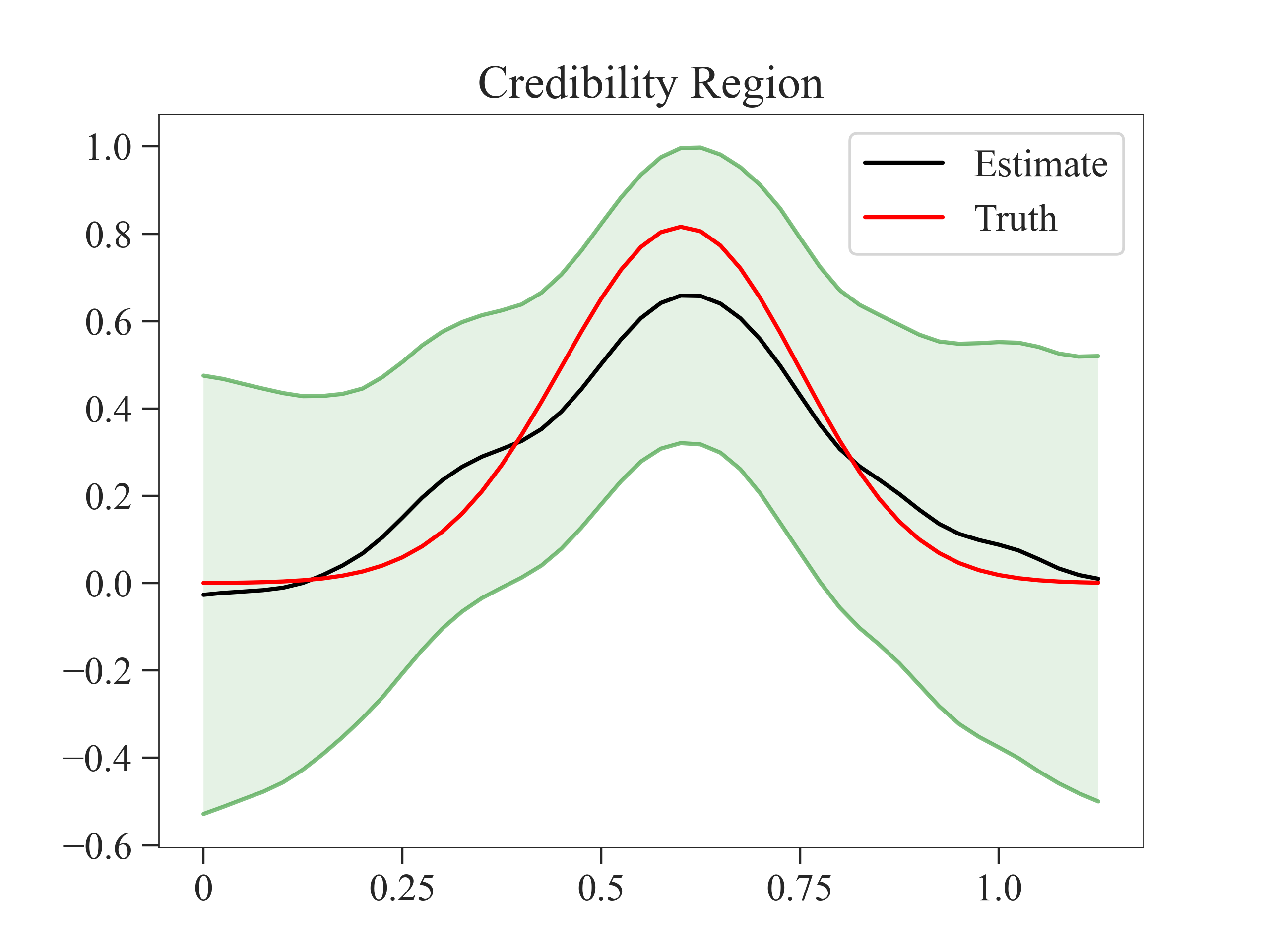}} 
	\subfloat[Credibility region $\text{III}$]{
		\includegraphics[ keepaspectratio=true, width=0.32\textwidth, clip=true, trim=24pt 18pt 30pt 22pt]{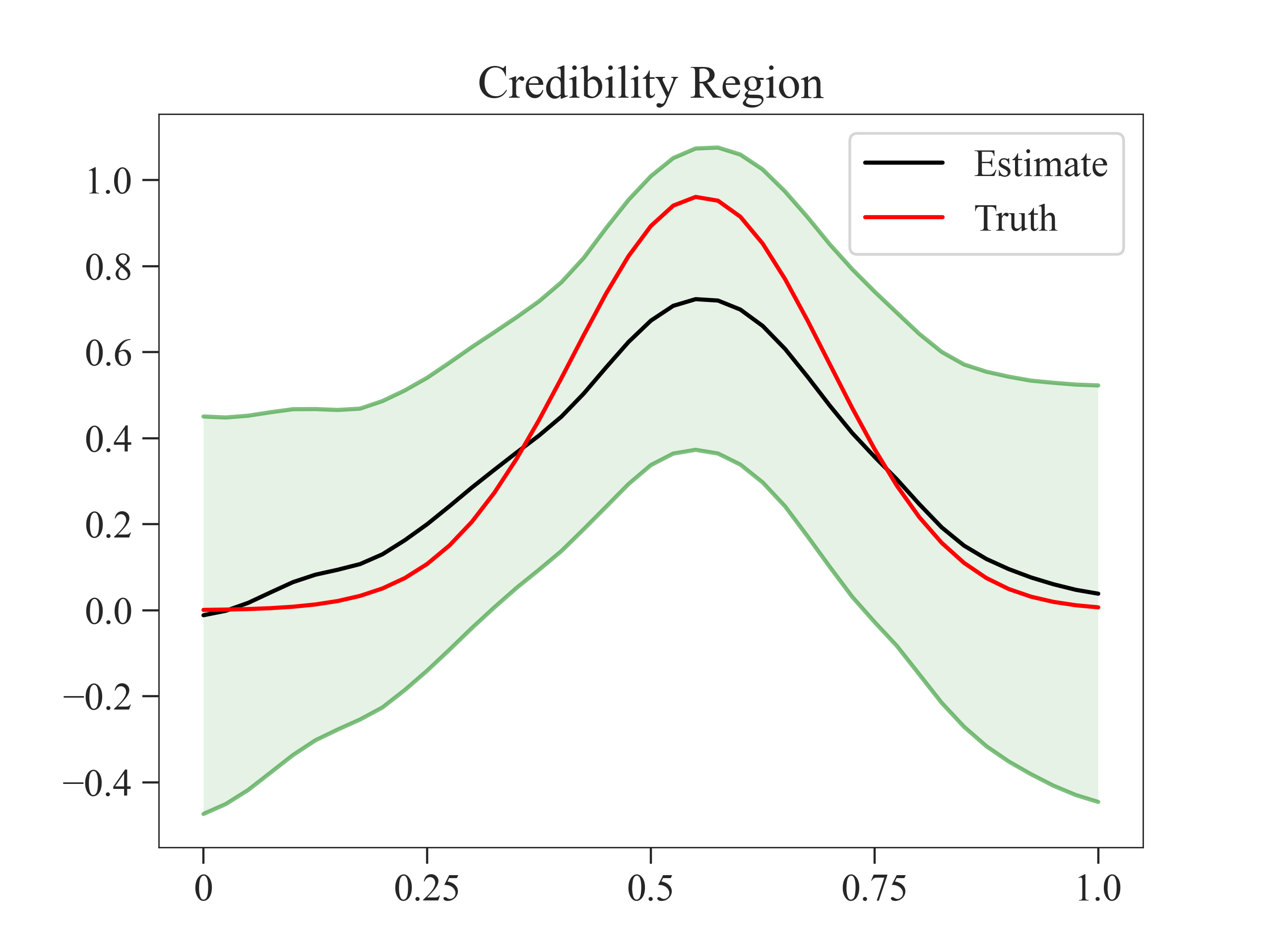}} 
	
	\caption{\emph{\small {
				The $95 \%$ credibility region of the estimated posterior mean function represented by the green shade area.
				(a): The black line represents the estimated mean $\lbrace u(x_i, x_i) \rbrace^{50}_{i=1}$, and red line represents the true function $\lbrace u^{\dagger}(x_i, x_i) \rbrace^{50}_{i=1}$.
				(b): The black line represents the estimated mean $\lbrace u(x_i, x_{i+5}) \rbrace^{45}_{i=1}$, and red line represents the true function $\lbrace u^{\dagger}(x_i, x_{i+5}) \rbrace^{45}_{i=1}$.
				(c): The black line represents the estimated mean $\lbrace u(x_i, x_{i+10}) \rbrace^{40}_{i=1}$, and red line represents the true function $\lbrace u^{\dagger}(x_i, x_{i+10}) \rbrace^{40}_{i=1}$. }}}
	
	\label{fig:Darcyvar}
	
\end{figure}

\subsubsection{Numerical results}

{
\noindent \textbf{Discussion of $u$}:

Firstly, in the sub-figures (a) and (b) of Figure $\ref{fig:Darcycomparison}$, we show the background truth and estimated posterior mean function of $u$.
The estimated posterior mean function of $u$ is visually similar to the true function.
In sub-figure (a) of Figure $\ref{fig:Darcyrelative}$, we draw the relative error curve calculated in $L^2$-norm, defined in $(\ref{equ:relative})$.
The relative error curve illustrates that the convergence speed is fast since the descending trend is rapid at first $10$ steps, and the relative error is stable around $8 \%$ at the end of the iteration steps.
This provides quantitative evidence that the estimated posterior mean function is similar to the background truth of $u$.
As a result, combining the visual (sub-figures (a), (b) of Figure $\ref{fig:Darcycomparison}$) and quantitative (relative errors shown in sub-figure (a) of Figure $\ref{fig:Darcyrelative}$) evidence, we say that the NCP-iMFVI method provides an estimated posterior mean function of the parameter $u$ which is similar to the background truth.

Secondly, we provide some discussions of the estimated posterior covariance functions about the parameter $u$.
In sub-figure (c) of Figure $\ref{fig:Darcycomparison}$, we draw the point-wise variance field of the posterior measure $u$.
To provide detailed evidence, we draw the comparisons of the estimated mean function and back-ground truth calculated by different mesh points in Figure $\ref{fig:Darcyvar}$, which are given by $\lbrace (x_i, x_i) \rbrace^{n}_{i=1}$, $\lbrace (x_i, x_{i+5}) \rbrace^{n-5}_{i=1}$, and $\lbrace (x_i, x_{i+10}) \rbrace^{n-10}_{i=1}$($n = 50$ according to the mesh size $= 50 \times 50$).
And the green shade area represents the $95 \%$ credibility region according to estimated posterior mean function.
In the sub-figures (a), (b), and (c) of Figure $\ref{fig:Darcyvar}$, the black lines represent the estimated mean $\lbrace u(x_i, x_i) \rbrace^{50}_{i=1}$, $\lbrace u(x_i, x_{i+5}) \rbrace^{45}_{i=1}$, and $\lbrace u(x_i, x_{i+10}) \rbrace^{40}_{i=1}$, respectively.
While red lines represent the true function $\lbrace u^{\dagger}(x_i, x_i) \rbrace^{50}_{i=1}$, $\lbrace u^{\dagger}(x_i, x_{i+5}) \rbrace^{45}_{i=1}$, and $\lbrace u^{\dagger}(x_i, x_{i+10}) \rbrace^{40}_{i=1}$, respectively.
We see that the credibility region contains background truth, which indicates that the Bayesian setup is meaningful and in accordance with the frequentist theoretical investigations of the posterior consistency \cite{wang2019frequentist, zhang2020convergence}. 
Combining with the variance function, we say that the proposed NCP-iMFVI method quantifies the uncertainties of the parameter $u$.

\noindent \textbf{Discussion of $\lambda$}:

In sub-figure (b) of Figure $\ref{fig:Darcyrelative}$, we draw the step values of $\lambda$.
At the beginning of the iteration process, the descending trend is rapid, which is similar to the trend of the relative errors shown in sub-figure (a) of Figure $\ref{fig:Darcyrelative}$.
During the iteration process, we see that the descending trend of $\lambda$ gradually becomes gentle.
Recalling the stop criteria (Step 6) of Algorithm $\ref{alg A}$, the step error of $\lambda$ is small when the iteration process is stopped.
This indicates that the hyper-parameter $\lambda$ is converged.
$\lambda$ is finally converged to $36.711$ within $126$ steps based on sub-figure (b) of Figure $\ref{fig:Darcyrelative}$.

\noindent \textbf{Mesh independence}:

At last, we illustrate the mesh independence of the NCP-iMFVI method, as expected for the ``Bayesianize-then-discretize'' approach.
We show the step norm curves obtained by each discrete level $ n = \lbrace 1600, 2025, 2500, 3025, 3600 \rbrace$ in sub-figure (c) of Figure $\ref{fig:Darcyrelative}$.
Although there are differences between each step norm curves, they perform the same descending trend for the different discretized dimensions.
Thus we say that the convergence speed is not affected by discretized dimensions, demonstrating that the NCP-iMFVI method has mesh independence property.

}

\begin{figure}
	\centering
	
	\subfloat[Relative errors]{
		\includegraphics[ keepaspectratio=true, width=0.32\textwidth, clip=true]{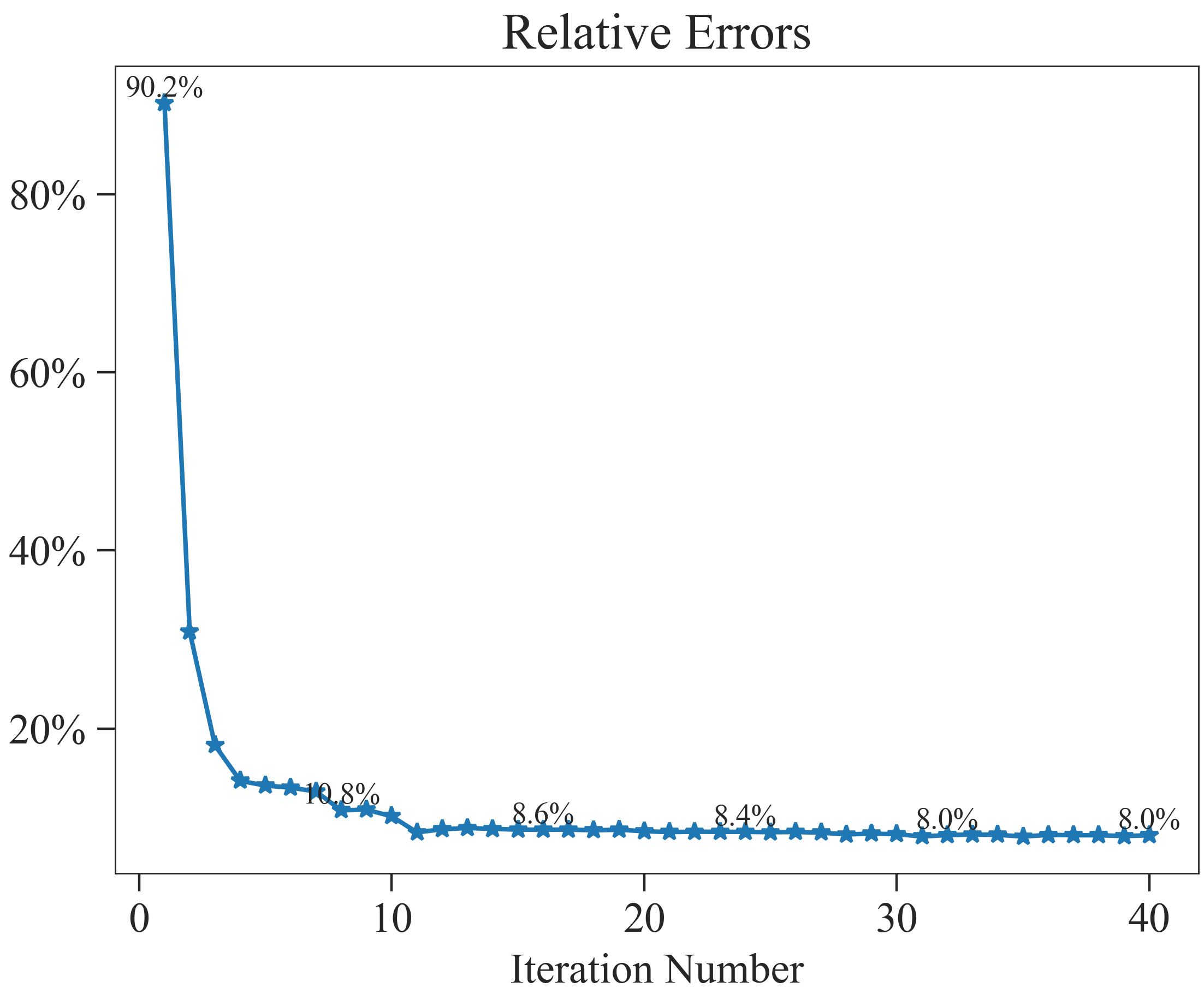}} 
	\subfloat[Step values of $\lambda$]{
		\includegraphics[ keepaspectratio=true, width=0.314\textwidth, clip=true]{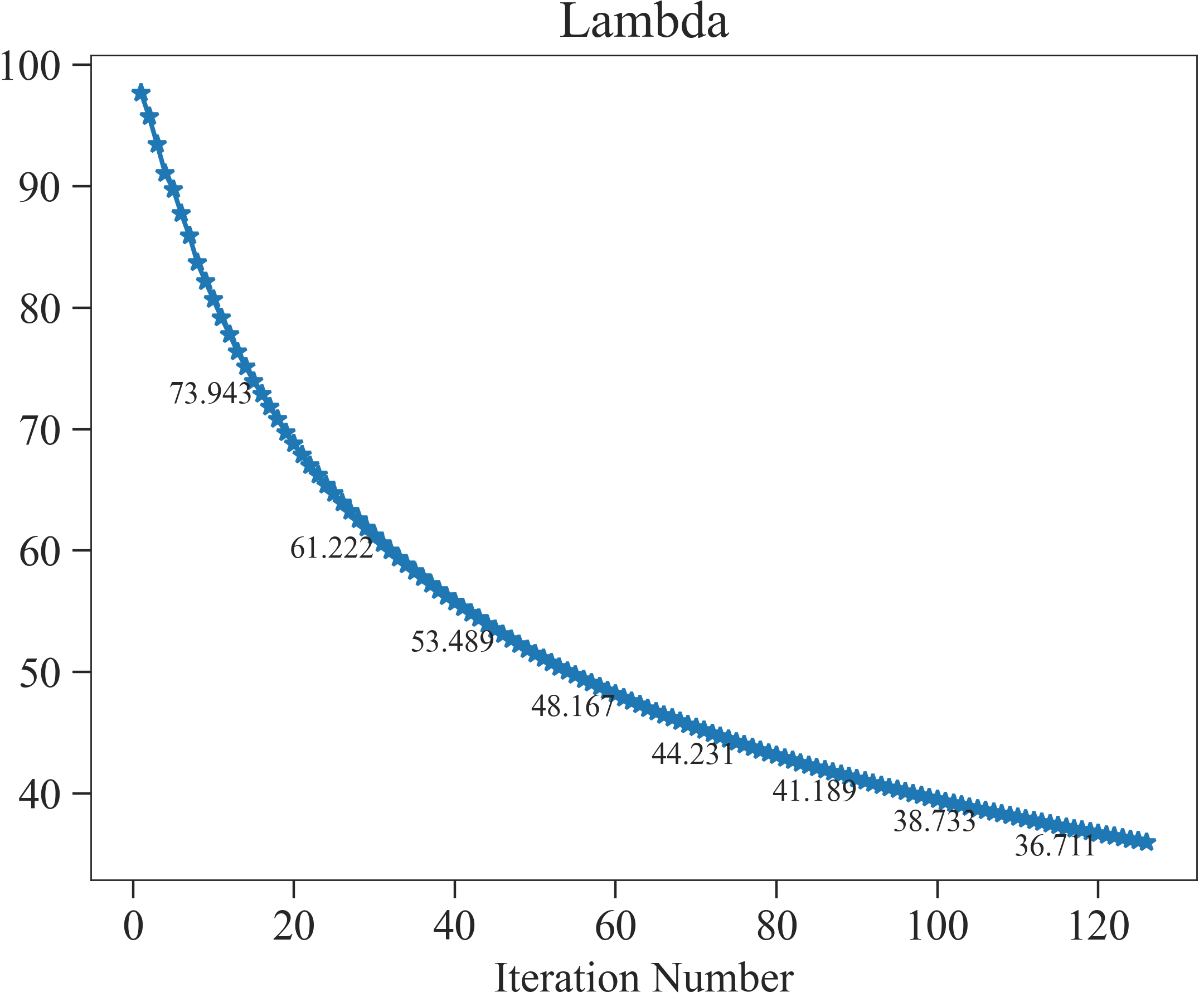}} 
	\subfloat[Step norms]{
		\includegraphics[ keepaspectratio=true, width=0.3142\textwidth, clip=true, trim=3pt 4pt 2pt 0pt]{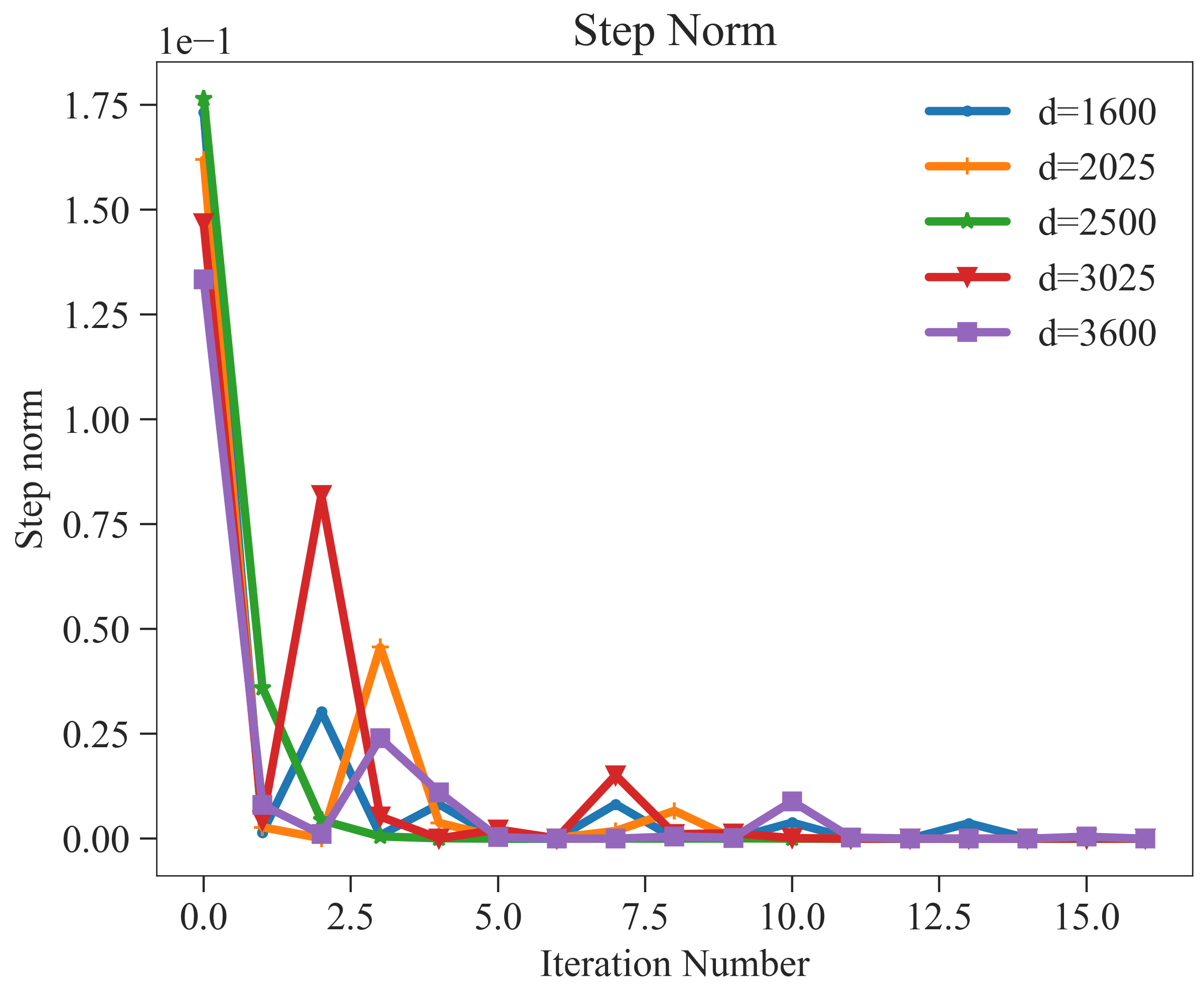}} 
	
	\caption{\emph{\small (a): Relative error of the estimated posterior means in $L^2$-norm under the mesh size {$50 \times 50$}; (b): The step values of $\lambda$ obtained by NCP-iMFVI; (c): Logarithm of the step norms computed by NCP-iMFVI method with different discretized dimensions $n = \lbrace1600, 2025, 2500, 3025, 3600 \rbrace$. }}
	
	\label{fig:Darcyrelative}
	
\end{figure}

\section{Conclusion}\label{sec4}
In this paper, we construct the NCP-iMFVI method in the infinite-dimensional space, which provides an efficient computational method for applying the iMFVI approach based on the hierarchical Bayesian model. 
The NCP-iMFVI method avoids the obstacle of priors being mutually singular, which is caused by the hyper-parameter $\lambda$ changing.
The established NCP-iMFVI approach is applied to linear inverse problems with Gaussian noises, deriving an explicit form of the posterior measure.
We employed this approach to three inverse problems of the simple smooth equation, the multi-frequency Helmholtz equation, and the steady-state Darcy flow problem, respectively.
For all of the inverse problems, NCP-iMFVI provides an estimated posterior mean function of $u$ that is similar to the background truth, quantifies the uncertain information of $u$ based on the visual and quantitative evidence.
The descending trend of step values of hyper-parameter $\lambda$ is rapid at the beginning of the iteration process, which is similar to the trend of the relative errors.
And $\lambda$ is finally converged.
Moreover, the convergence speed is not affected by discretized dimensions, demonstrating that the NCP-iMFVI method has mesh independence property.

The current NCP-iMFVI method is based on the analysis under the hierarchical linear problem.
For solving the non-linear inverse problems, the NCP-iMFVI method can only manage the linearized part and provide an approximated posterior measure, as shown in Subsection $\ref{subsec3.3}$.
As a result, the approximated probability measure could be inaccurate for some highly non-linear problems.
In finite-dimensional spaces, the NCP formulation has been employed to solve a non-linear geostatistical inverse problem \cite{papaspiliopoulos2003non}. 
Thus developing the NCP-iMFVI method for solving non-linear inverse problems is worth investigating in future work.
On the other hand, under our mean-field assumption, the parameters are assumed to be independent of each other, and the degree of dependence of each parameter cannot be accurately portrayed.
To ameliorate this problem, the hierarchical VI, which is able to capture dependencies between parameters, has been studied in \cite{tran2015variational} under the finite-dimensional setting.
It is worthwhile developing the infinite-dimensional hierarchical VI methods that can reserve the dependencies of the parameters.

\section{Appendix}
\subsection{The infinite-dimensional variational inference theory}

In this section, we intend to offer an introduction to the infinite-dimensional variational inference theory, which is a brief version of \cite{jia2021variational}. We should highlight that we improve the statement of Theorem 11 in \cite{jia2021variational} in order to make the theory more suitable. 
Let the Bayesian formula on the Hilbert space be defined by
\begin{align}
	\frac{d\mu}{d\mu_0}(x) = \frac{1}{Z_{\mu}} \exp (-\Phi(x)),
\end{align}
where $\Phi(x):\mathcal{H} \rightarrow \mathbb{R}$ is a continuous function, and $\exp(-\Phi(x))$ is integrable with respect to $\mu_0$.
Here we denote that $\mu_0$ represents the prior measure, and $\mu$ is the posterior we intend to estimate on some Hilbert space $\mathcal{H}$. 
$Z_{\mu}$ is a Constant making sure that measure $\mu$ is indeed a probability measure. 
As our aim is to choose a closest measure $\nu$ to estimate $\mu$, the variational inference problem can be modeled as 
\begin{align}\label{eq1}
	\arg\min \limits_{\nu \in \mathcal{A}} D_{KL} (\nu \Arrowvert \mu),
\end{align}
where $\mathcal{A}\subset \mathcal{M}(\mathcal{H})$ is a set of``simpler'' measures that can be calculated efficiently, and $\mathcal{M}(\mathcal{H})$ is the Borel measure set on $\mathcal{H}$.
For a fixed Constant $M$, we assume the variable $x = (x_1, x_2, \cdots, x_M)$. Particularly, in the main text, we obtain that $M = 2$, and $x_1 = \lambda, x_2 = v$. We specify the Hilbert space $\mathcal{H}$ and subset $\mathcal{A}$ as 
\begin{align}
	\mathcal{H} = \prod^M_{j=1}\mathcal{H}_j, \quad \mathcal{A} = \prod^M_{j=1}\mathcal{A}_j
\end{align}
where $\mathcal{H}_j, j = 1, \cdots, M$ are a series of separable Hilbert space and $\mathcal{A}_j \subset \mathcal{M}(\mathcal{H}_j)$. Let $\nu := \prod^M_{i=1}\nu^i$ be a probability measure such that $\nu(dx) = \prod^M_{i=1}\nu^i(dx)$. With these assumptions, the minimization problem of ($\ref{eq1}$) can be rewritten as
\begin{align}\label{eq3}
	\arg\min \limits_{\nu^i \in \mathcal{A}_i}D_{KL} \bigg (\prod^M_{i=1}\nu^i \bigg \Arrowvert \mu \bigg )
\end{align}
for suit sets $\mathcal{A}_i$ with $i = 1, 2, \cdots, M$. Here we need to introduce the approximate probability measure $\nu$ given in (\ref{eq1}) which is equivalent to $\mu_0$, defined by
\begin{align}
	\frac{d\nu}{d\mu_0}(x) = \frac{1}{Z_{\nu}} \exp (-\Phi_{\nu}(x)).
\end{align}
A nature way for introducing an independence assumption is to assume that the potential $\Phi_{\nu}(x)$ can be decomposed as 
\begin{align}
	\exp (-\Phi_{\nu}(x)) = \prod^M_{i=1} \exp \bigg (\Phi^i_{\nu}(x_i) \bigg ),
\end{align}
where $x = (x_1, \cdots, x_M)$. Given these considerations, the following assumption is introduced.
\begin{assumption}\label{ass1}
	Let us introduce a reference probability measure
	\begin{align}
		\mu_r(dx) = \prod^M_{i=1}\mu^i_r(dx_j),
	\end{align}
	which is equivalent to the prior probability measure with the following relation:
	\begin{align}
		\frac{d\mu_0}{d\mu_r}(x) = \frac{1}{Z_0} \exp (-\Phi^0(x)).
	\end{align}
	For each $i=1, 2, \cdots, M$, there is a predefined continuous function $a_i(\epsilon, x_i)$, where $\epsilon$ is a positive number and $x_i \in \mathcal{H}_i$. Concerning these functions, we assume that $\mathbb{E}^{\mu^i_r}[a_i(\epsilon, \cdot)] < \infty$ where $\epsilon \in [0, \epsilon^i_0), i = 1, \cdots, M$ with $\epsilon^i_0$ is a small positive number we firstly defined. We also assume that the approximate probability measure $\nu$ is equivalent to the reference measure $\mu_r$ and that the Radon-Nikodym derivative of $\nu$ with respect to $\mu_r$ takes the following form
	\begin{align}
		\frac{d\nu}{d\mu_r}(x) = \frac{1}{Z_r} \exp \bigg (-\sum^M_{i=1}\Phi^r_i(x_i) \bigg ).
	\end{align}
	
\end{assumption}
Following Assumption $\ref{ass1}$, we know that the approximation measure can be decomposed as $\nu(dx) = \prod^M_{i=1}\nu^i(dx_i)$ with
\begin{align}\label{eq2}
	\frac{d\nu^i}{d\mu^i_r}(x) = \frac{1}{Z^i_r} \exp (-\Phi^r_i(x_i)).
\end{align}
Here denote that $Z^i_r = \mathbb{E}^{\mu^i_r}[\exp(-\Phi^r_i(x_i))]$ which ensures that $\nu_i$ is indeed a probability measure. For $i = 1, 2, \cdots, M$, let $\mathcal{Z}_i$ be a Hilbert space that embedded in $\mathcal{H}_i$, then we introduce
\begin{align*}
	R^1_i &= \bigg \lbrace \Phi^r_i \bigg | \sup \limits_{1/N \leqslant \lVert x_i \rVert \leqslant N} \Phi^r_i(x_i) < \infty, \quad \text{forall} \ N > 0 \bigg \rbrace \\
	R^2_i &= \bigg \lbrace \Phi^r_i \bigg | \int_{\mathcal{H}_i} \exp (-\Phi^r_i(x_i))\max(1, a_i(\epsilon, x_i))\mu^i_r(dx_i) < \infty, \quad \text{forall} \ \epsilon \in [0, \epsilon^i_0) \bigg \rbrace
\end{align*}
where $\epsilon^i_0$ and $a_i(\cdot, \cdot)$ are defined as in Assumption $\ref{ass1}$. With these preparations, we can define $\mathcal{A}_i, i = 1, 2, \cdots, M$ as follows:
\begin{align}
	\mathcal{A}_i = \left\{
	\begin{tabular}{l|l}
		\multirowcell{2}[0pt][l]{$\nu^i \in \mathcal{M}(\mathcal{H}_i)$} &
		\multirowcell{2}[0pt][l]{$\nu^i$ is equivalent to $\mu^i_r$ with (\ref{eq2}) holding true,\\
			and $\Phi^r_i \in R^1_i \bigcap R^2_i$} \\
		&
	\end{tabular}
	\right\}
\end{align}

Now, we are able to state the main theorem that yields practical iterative algorithms:
\begin{theorem}\label{the1}
	Assume that the approximate probability measure in problem $(\ref{eq3})$ satisfies Assumption $\ref{ass1}$, For $i = 1, 2, \cdots, M$, we denote $T^i_N = \lbrace x_i | 1/N \leqslant \lVert x_i \rVert_{\mathcal{Z}_i} \leqslant N \rbrace$, with $N$ being an arbitrary positive Constant. For each reference measure $\mu^i_r$, we assume that $\sup_N \mu^i_r(T^i_N)=1$. In addition, we assume 
	\begin{align}
		\sup \limits_{x_i \in T^i_N}\int_{\prod_{j \neq i}\mathcal{H}_j} \bigg (\Phi^0(x)+\Phi(x) \bigg )1_A(x)\prod \limits_{j \neq i}\nu^j(dx_j) < \infty
	\end{align}
	and 
	\begin{align}
		\int_{\mathcal{H}_i} \exp \bigg (-\int_{\prod_{j \neq i}\mathcal{H}_j}(\Phi^0(x)+\Phi(x))1_{A^c}(x)\prod \limits_{j \neq i}\nu^j(dx_j) \bigg )M_i(x)\mu^i_r(dx_i) < \infty,
	\end{align}
	where $A := \lbrace x | \Phi^0(x) + \Phi(x) \geqslant 0 \rbrace$, and $M_i := \max(1, a_i(\epsilon, x_i))$ with $i, j = 1, 2, \cdots, M$. Then the problem $(\ref{eq3})$ possess a solution $\nu = \prod^M_{i=1}\nu_i \in \mathcal{M}(\mathcal{H})$ with the following form
	\begin{align}
		\frac{d\nu}{d\mu_r}(x) \varpropto \exp \bigg (-\sum^M_{i=1}\Phi^r_i(x_i) \bigg),
	\end{align}
	where 
	\begin{align}
		\Phi^r_i(x_i) = \int_{\prod_{j \neq i}\mathcal{H}_j} \bigg (\Phi^0(x) + \Phi(x) \bigg )\prod \limits_{j \neq i}\nu^j(dx_j) + \text{Const}
	\end{align}
	and 
	\begin{align}
		\nu^i(dx_i) \varpropto \exp (-\Phi^r_i(x_i))\mu^i_r(dx_i).
	\end{align}
\end{theorem}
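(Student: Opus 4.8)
The plan is to prove Theorem~\ref{the1} by the classical coordinate-wise (mean-field) optimality argument, carried out one factor at a time through the reference measure $\mu_r$. Fix an index $i$ and freeze the factors $\nu^j$ for $j\neq i$. First I would rewrite the divergence through the chain $\frac{d\nu}{d\mu}=\frac{d\nu}{d\mu_r}\cdot\frac{d\mu_r}{d\mu_0}\cdot\frac{d\mu_0}{d\mu}$, using $\frac{d\mu}{d\mu_0}=\frac{1}{Z_\mu}\exp(-\Phi)$, $\frac{d\mu_0}{d\mu_r}=\frac{1}{Z_0}\exp(-\Phi^0)$, and $\frac{d\nu}{d\mu_r}=\frac{1}{Z_r}\exp(-\sum_j\Phi^r_j)$; since $\mu\ll\mu_0$ and $\mu_0,\nu,\mu_r$ are mutually equivalent, $\nu\ll\mu$, and the divergence is finite as soon as the integrals below are. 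Collecting all terms not depending on $\nu^i$ into a constant $C_i$ and using that $\nu$ is a product, one obtains
\begin{align*}
D_{KL}(\nu\|\mu)=\mathbb{E}^{\nu^i}\Big[\log\tfrac{d\nu^i}{d\mu_r^i}(x_i)+g_i(x_i)\Big]+C_i,\qquad g_i(x_i):=\int_{\prod_{j\neq i}\mathcal{H}_j}\big(\Phi^0(x)+\Phi(x)\big)\prod_{j\neq i}\nu^j(dx_j).
\end{align*}

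Next I would introduce the auxiliary measure $\tilde\mu^i$ on $\mathcal{H}_i$ with $\frac{d\tilde\mu^i}{d\mu_r^i}=\frac{1}{\tilde Z^i}\exp(-g_i)$, a well-defined probability measure once $\tilde Z^i=\mathbb{E}^{\mu_r^i}[\exp(-g_i)]\in(0,\infty)$. Since $\log\frac{d\nu^i}{d\mu_r^i}+g_i=\log\frac{d\nu^i}{d\tilde\mu^i}-\log\tilde Z^i$, the $\nu^i$-dependent part of the objective equals $D_{KL}(\nu^i\|\tilde\mu^i)-\log\tilde Z^i$, which by non-negativity of the KL divergence (equality iff the two measures coincide) is minimized exactly at $\nu^i=\tilde\mu^i$. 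This gives $\Phi^r_i(x_i)=g_i(x_i)+\mathrm{Const}$ and $\nu^i(dx_i)\varpropto\exp(-\Phi^r_i(x_i))\mu_r^i(dx_i)$, which is precisely the asserted form. Running this simultaneously over $i=1,\dots,M$ characterizes the solutions of $(\ref{eq3})$ as the measures of the displayed product form; as already noted after Theorem~\ref{the:minimal}, uniqueness of a global minimizer cannot be expected, so this coordinate-wise characterization is the natural statement.

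The remaining and most delicate step is to verify that the minimizer just produced actually lies in $\mathcal{A}_i$, i.e. that $\Phi^r_i=g_i+\mathrm{Const}$ belongs to $R^1_i\cap R^2_i$ and that $\tilde Z^i$ is finite and positive — this is exactly what the two displayed hypotheses of the theorem provide. Splitting $g_i$ over $A=\{x:\Phi^0(x)+\Phi(x)\ge0\}$ and $A^c$: on $A^c$ the integrand is negative, so $\int(\Phi^0+\Phi)1_{A^c}\prod_{j\neq i}\nu^j\le 0$ and hence $g_i\le\int(\Phi^0+\Phi)1_A\prod_{j\neq i}\nu^j$, whose supremum over each shell $T^i_N$ is finite by the first hypothesis; combined with $\sup_N\mu_r^i(T^i_N)=1$ this yields $\Phi^r_i\in R^1_i$. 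For $R^2_i$, on $A$ the integrand is non-negative, so $\exp(-\int(\Phi^0+\Phi)1_A\prod_{j\neq i}\nu^j)\le 1$, whence $\exp(-\Phi^r_i)\le\mathrm{Const}\cdot\exp(-\int(\Phi^0+\Phi)1_{A^c}\prod_{j\neq i}\nu^j)$; multiplying by $M_i=\max(1,a_i(\epsilon,x_i))$ and integrating against $\mu_r^i$ is finite by the second hypothesis, and the same bound with $M_i$ replaced by $1$ gives $\tilde Z^i<\infty$, while $\tilde Z^i>0$ follows since $\exp(-g_i)>0$ $\mu_r^i$-a.e.

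The main obstacles I anticipate are analytic rather than conceptual. One is to show that $g_i(x_i)$ is finite for $\mu_r^i$-a.e.\ $x_i$ and Borel measurable, so that $\tilde\mu^i$ and all the conditional expectations make sense; this rests again on the two integrability hypotheses together with Tonelli's theorem and the exhaustion property $\sup_N\mu_r^i(T^i_N)=1$. The other is to justify the rearrangement of $D_{KL}(\nu\|\mu)$ into an $i$-dependent piece plus the finite constant $C_i$ without encountering an $\infty-\infty$ indeterminacy; this is handled by first establishing $\nu\ll\mu$ via the chain of equivalences and then invoking the finiteness guaranteed by Assumption~\ref{ass1} and the hypotheses. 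Everything else is a direct transcription of the finite-dimensional mean-field computation into the $\mu_r$-based framework.
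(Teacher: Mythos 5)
The paper does not actually write out a proof of this theorem: it states only that ``the proof of Theorem 11 in \cite{jia2021variational} can be taken step by step to prove Theorem \ref{the1}'', so your proposal has to be judged against that cited argument. Your route is the same one: freeze $\nu^j$ for $j\neq i$, pass through the reference measure to isolate $\mathbb{E}^{\nu^i}[\log\frac{d\nu^i}{d\mu_r^i}+g_i]$, recognize this as $D_{\mathrm{KL}}(\nu^i\|\tilde\mu^i)-\log\tilde Z^i$, and conclude $\nu^i\varpropto\exp(-g_i)\mu_r^i$. Your verification that $\Phi^r_i=g_i+\mathrm{Const}$ lies in $R^1_i\cap R^2_i$ and that $0<\tilde Z^i<\infty$, via the split over $A$ and $A^c$ and the two displayed integrability hypotheses together with $\sup_N\mu_r^i(T^i_N)=1$, is exactly the role those hypotheses are designed to play, and your handling of the measurability/finiteness of $g_i$ and of the $\infty-\infty$ issue in the rearrangement is appropriate.

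The one genuine gap is the existence claim. Your argument establishes a \emph{necessary} form: if $\nu=\prod_i\nu^i$ is optimal (even only coordinate-wise optimal), then each $\nu^i$ must satisfy $\nu^i\varpropto\exp(-\Phi^r_i)\mu_r^i$ with $\Phi^r_i$ defined through the \emph{other} factors $\nu^j$. These are $M$ coupled fixed-point equations, and minimizing over $\nu^i$ with the others frozen does not by itself produce a measure satisfying all $M$ equations simultaneously, nor does it show that the infimum of $D_{\mathrm{KL}}(\cdot\|\mu)$ over $\mathcal{A}=\prod_i\mathcal{A}_i$ is attained. The statement ``problem $(\ref{eq3})$ possesses a solution of the following form'' therefore needs an additional argument --- e.g.\ boundedness below and lower semicontinuity of the KL functional together with a compactness argument on $\mathcal{A}$, or a proof that the alternating (coordinate-descent) scheme converges to an admissible limit --- and this is precisely the substantive content supplied by the proof of Theorem 11 in \cite{jia2021variational} that your proposal leaves open. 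Everything else in your write-up is a faithful transcription of that argument.
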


We should point out that Theorem $\ref{the1}$ and the definition of $R^1_i$ and $R^2_i, i = 1, 2, \cdots, M$ are slightly different from the statements given in \cite{jia2021variational}. 
The proof of Theorem 11 in \cite{jia2021variational} can be taken step by step to prove Theorem $\ref{the1}$. 
Actually, the version of Theorem $\ref{the1}$ can be regarded as a more appropriate amelioration of Theorem 11 in \cite{jia2021variational}, which can be verified more easily for practical problems.\\

\subsection{Assumption 1 and Theorem 15-16 in \cite{dashti2013bayesian}}

Here we provide the Assumption 1 and Theorem 15-16 in \cite{dashti2013bayesian} needed in proving Theorem 2.1, which is shown in Assumption $\ref{assump1app}$, Theorem $\ref{theorem1app}$ and $\ref{theorem2app}$, respectively.
Here we need to state that the symbols in \cite{dashti2013bayesian} have been changed to the notations in our paper for the assumption and theorems below.

\begin{assumption}\label{assump1app}
	(Assumption 1 in \cite{dashti2013bayesian})
	Let us denote $X = \mathcal{H}_u\times \mathbb{R}$, and assume $\Phi \in C(X\times \mathbb{R}^{N_d}; \mathbb{R})$.
	Assume further that there are functions $M_i:\mathbb{R}^{+}\times \mathbb{R}^{+} \rightarrow \mathbb{R}^{+}, i=1, 2$, monotonic non-decreasing separately in each argument, and with $M_2$ strictly positive, such that for all $v \in \mathcal{H}_u$, $\bm{d}, \bm{d}_1, \bm{d}_2 \in B_{\mathbb{R}^{N_d}}(0, r)$,
	\begin{align*}
		\Phi(v, \lambda; \bm{d}) &\geq -M_1(r, |\lambda|\lVert v \rVert_{\mathcal{H}_u}), \\
		\lvert \Phi(v, \lambda; \bm{d}_1) - \Phi(v, \lambda; \bm{d}_2)\rvert &\leq M_2(r, |\lambda|\lVert v \rVert_{\mathcal{H}_u})\lVert \bm{d}_1 - \bm{d}_2 \rVert_{\mathbb{R}^{N_d}}.
	\end{align*}
\end{assumption}

\begin{theorem}\label{theorem1app}
	(Theorem 15 in \cite{dashti2013bayesian})
	Let Assumption $\ref{assump1app}$ hold.
	Assume that $\mu_0(X) = 1$, and that $\mu_0(X \cap B) > 0$ for some bounded set $B$ in $X$.
	Assume additionally that, for every fixed $r>0$,
	\begin{align*}
		\exp(M_1(r, |\lambda|\lVert v \rVert_{\mathcal{H}_u})) \in L^1_{\mu_0}(X; \mathbb{R}),
	\end{align*}
	where $L_{\mu_0}^1(X;\mathbb{R})$ represents $X$-valued integrable functions under the measure $\mu_0$.
	Then for every $\bm{d} \in \mathbb{R}^{N_d}$, $Z_{\mu} = \int_X \exp(-\Phi(v, \lambda;\bm{d}))\mu^v_0(dv)\mu^{\lambda}_0(d\lambda)$ is positive and finite, and the probability measure $\mu$ is well defined, which is given by
	\begin{align*}
		\frac{d\mu}{d\mu_0}(v, \lambda) = \frac{1}{Z_{\mu}}\exp(-\Phi(v, \lambda; \bm{d})).
	\end{align*}
\end{theorem}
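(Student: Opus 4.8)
The plan is to reduce the whole statement to the two-sided estimate $0<Z_\mu<\infty$, since once that is established the Radon--Nikodym formula follows at once: the map $(v,\lambda)\mapsto Z_\mu^{-1}\exp(-\Phi(v,\lambda;\bm{d}))$ is nonnegative and $\mu_0$-measurable (measurability coming from the continuity of $\Phi$ postulated in Assumption \ref{assump1app}), and it integrates to $1$ against $\mu_0=\mu_0^v\times\mu_0^\lambda$, so it is a genuine probability density and defines a probability measure $\mu\ll\mu_0$ with the claimed derivative. Accordingly, after fixing $\bm{d}\in\mathbb{R}^{N_d}$ and choosing any $r>0$ with $\bm{d}\in B_{\mathbb{R}^{N_d}}(0,r)$, I would prove an upper and a lower bound on $Z_\mu$ separately.

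For finiteness I would use only the first inequality in Assumption \ref{assump1app}, namely $\Phi(v,\lambda;\bm{d})\geq -M_1(r,|\lambda|\lVert v\rVert_{\mathcal{H}_u})$, which gives the pointwise domination $\exp(-\Phi(v,\lambda;\bm{d}))\leq\exp(M_1(r,|\lambda|\lVert v\rVert_{\mathcal{H}_u}))$ on all of $X$. Integrating and invoking the standing hypothesis $\exp(M_1(r,|\lambda|\lVert v\rVert_{\mathcal{H}_u}))\in L^1_{\mu_0}(X;\mathbb{R})$ yields
\begin{align*}
Z_\mu=\int_X\exp(-\Phi(v,\lambda;\bm{d}))\,\mu_0^v(dv)\mu_0^\lambda(d\lambda)\leq\int_X\exp(M_1(r,|\lambda|\lVert v\rVert_{\mathcal{H}_u}))\,\mu_0^v(dv)\mu_0^\lambda(d\lambda)<\infty,
\end{align*}
which is the routine direction.

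For positivity I would exploit that $\Phi(\cdot\,;\bm{d}):X\to\mathbb{R}$ is continuous and real-valued, so that the sublevel sets $X_n:=\{(v,\lambda)\in X:\Phi(v,\lambda;\bm{d})\leq n\}$ are increasing with $\bigcup_n X_n=X$; since $\mu_0(X)=1$, continuity of $\mu_0$ from below forces $\mu_0(X_{n_0})>0$ for some $n_0$, whence
\begin{align*}
Z_\mu\geq\int_{X_{n_0}}\exp(-\Phi(v,\lambda;\bm{d}))\,\mu_0^v(dv)\mu_0^\lambda(d\lambda)\geq e^{-n_0}\,\mu_0(X_{n_0})>0.
\end{align*}
Alternatively, the structural hypothesis $\mu_0(X\cap B)>0$ for a bounded $B$ can be used: on $B$ the quantity $|\lambda|\lVert v\rVert_{\mathcal{H}_u}$ is bounded, and combining this with the continuity of $\Phi$ one again isolates a set of positive $\mu_0$-measure on which $\Phi$ is bounded above. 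I expect this lower bound to be the only genuinely delicate point, because in infinite dimensions one cannot appeal to compactness of bounded sets to control $\Phi$ directly; the existence of a positive-measure set on which $\Phi$ is bounded above must instead be extracted from continuity of $\Phi$ together with the countable additivity of $\mu_0$, which is exactly the content encoded by the hypotheses $\mu_0(X)=1$ and $\mu_0(X\cap B)>0$.

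Putting the two bounds together gives $0<Z_\mu<\infty$, and the reduction in the first paragraph then closes the argument. For the record, only the lower bound on $\Phi$ from Assumption \ref{assump1app} is needed here; the Lipschitz-in-data estimate governed by $M_2$ plays no role in the well-posedness of $\mu$ and is used only for the companion data-stability statement (Theorem \ref{theorem2app}).
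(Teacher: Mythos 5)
Your argument is correct. Note that the paper itself offers no proof of this statement: it is imported verbatim (as Theorem 15 of the cited Dashti--Stuart notes) purely as an ingredient for Theorem \ref{BayesTheoremNCP}, so there is nothing in-paper to compare against; the relevant benchmark is the reference's own proof. Your finiteness step is exactly the standard one (dominate $\exp(-\Phi)$ by $\exp(M_1(r,|\lambda|\lVert v\rVert_{\mathcal{H}_u}))$ and invoke the integrability hypothesis), and your reduction of the Radon--Nikodym claim to $0<Z_\mu<\infty$ is routine. The one place you genuinely diverge is positivity: the reference obtains it from the bounded set $B$ together with an upper-bound condition on $\Phi$ over bounded sets (a clause of the original Assumption that the paper's restatement in Assumption \ref{assump1app} silently omits), whereas your sublevel-set argument $X_n=\{\Phi\le n\}\uparrow X$, $\mu_0(X_{n_0})>0$, $Z_\mu\ge e^{-n_0}\mu_0(X_{n_0})$ needs only continuity (hence finiteness) of $\Phi$ and $\mu_0(X)=1$. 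This is actually the more robust route given the abridged assumption, and it renders the hypothesis $\mu_0(X\cap B)>0$ superfluous. Your sketched ``alternative'' via $B$ should be dropped or repaired, though: boundedness of $|\lambda|\lVert v\rVert_{\mathcal{H}_u}$ on $B$ only controls the lower bound $-M_1$ and gives no upper bound on $\Phi$ there (continuity does not imply boundedness on bounded sets in infinite dimensions, as you yourself observe), so without the omitted upper-bound clause that route does not close. Since your primary argument stands on its own, this is a cosmetic rather than substantive issue.
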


\begin{theorem}\label{theorem2app}
	(Theorem 16 in \cite{dashti2013bayesian})
	Let Assumption $\ref{assump1app}$ hold.
	Assume that $\mu_0(X) = 1$ and that $\mu_0(X \cap B) > 0$ for some bounded set $B$ in $X$.
	Assume additionally that, for every fixed $r > 0$,
	\begin{align*}
		\exp(M_1(r, |\lambda|\lVert v \rVert_{\mathcal{H}_u}))(1 + M_2(r, |\lambda|\lVert v \rVert_{\mathcal{H}_u})^2) \in L^1_{\mu_0}(X; \mathbb{R}).
	\end{align*}
	Then there is $C = C(r) > 0$ such that, for all $\bm{d}, \bm{d}^{\prime} \in B_Y(0, r)$
	\begin{align*}
		\sqrt{\frac{1}{2}\int \bigg (1 -  \sqrt{\frac{d\mu^{\prime}}{d\mu}} \bigg )^2d\mu} \leq C\lVert \bm{d} - \bm{d}^{\prime} \rVert_{\mathbb{R}^{N_d}},\\
	\end{align*}
	where measures $\mu^{\prime}$, $\mu$ represent the posterior measure according to $\bm{d}^{\prime}$, $\bm{d}$, respectively.
\end{theorem}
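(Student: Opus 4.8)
The statement is the Lipschitz continuity of the data-to-posterior map in the Hellinger metric, and the plan is to run the standard well-posedness argument behind Theorem 16 of \cite{dashti2013bayesian}. Write $X=\mathcal{H}_u\times\mathbb{R}$, let $\mu_0=\mu^v_0\times\mu^{\lambda}_0$ be the joint prior, write $\Phi(\cdot;\bm{d})$ for $(v,\lambda)\mapsto\Phi(v,\lambda;\bm{d})$, and set $Z:=\int_X e^{-\Phi(\cdot;\bm{d})}\mu_0(dv,d\lambda)$ and $Z':=\int_X e^{-\Phi(\cdot;\bm{d}')}\mu_0(dv,d\lambda)$. First I would record that $Z,Z'$ are finite — from $\exp(M_1(r,|\lambda|\lVert v\rVert_{\mathcal{H}_u}))\in L^1_{\mu_0}(X;\mathbb{R})$ and $\Phi\ge-M_1(r,|\lambda|\lVert v\rVert_{\mathcal{H}_u})$ — and bounded below by a positive constant depending only on $r$: the data-Lipschitz estimate gives $\Phi(v,\lambda;\bm{d})\le\Phi(v,\lambda;0)+rM_2(r,|\lambda|\lVert v\rVert_{\mathcal{H}_u})$ whenever $\lVert\bm{d}\rVert_{\mathbb{R}^{N_d}}\le r$, so $e^{-\Phi(\cdot;\bm{d})}$ is bounded below on a set of positive $\mu_0$-measure not depending on $\bm{d}$, exactly as in the proof of Theorem \ref{theorem1app}.

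Next I would express the Hellinger integral against the common dominating measure $\mu_0$, using $(1-\sqrt{d\mu'/d\mu})^2\,d\mu=(\sqrt{d\mu/d\mu_0}-\sqrt{d\mu'/d\mu_0})^2\,d\mu_0$, and telescope:
\begin{align*}
	\sqrt{\frac{d\mu}{d\mu_0}}-\sqrt{\frac{d\mu'}{d\mu_0}}
	&=Z^{-1/2}\Bigl(e^{-\Phi(\cdot;\bm{d})/2}-e^{-\Phi(\cdot;\bm{d}')/2}\Bigr)\\
	&\quad+\bigl(Z^{-1/2}-(Z')^{-1/2}\bigr)e^{-\Phi(\cdot;\bm{d}')/2}.
\end{align*}
By $(a+b)^2\le2a^2+2b^2$ and integration, the square of the Hellinger distance is at most $I_1+I_2$, where
\begin{align*}
	I_1:=Z^{-1}\int_X\Bigl(e^{-\Phi(\cdot;\bm{d})/2}-e^{-\Phi(\cdot;\bm{d}')/2}\Bigr)^2\mu_0(dv,d\lambda),
	\qquad
	I_2:=\bigl(Z^{-1/2}-(Z')^{-1/2}\bigr)^2Z'
\end{align*}
and it remains to bound each by a constant depending only on $r$ times $\lVert\bm{d}-\bm{d}'\rVert^2_{\mathbb{R}^{N_d}}$.

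For $I_1$ I would use $|e^{-a}-e^{-b}|\le|a-b|\max(e^{-a},e^{-b})$ with $a=\tfrac12\Phi(v,\lambda;\bm{d})$ and $b=\tfrac12\Phi(v,\lambda;\bm{d}')$, so that the integrand of $I_1$ is at most $\tfrac14|\Phi(v,\lambda;\bm{d})-\Phi(v,\lambda;\bm{d}')|^2\exp(M_1(r,|\lambda|\lVert v\rVert_{\mathcal{H}_u}))$ by the lower bound on $\Phi$; the data-Lipschitz bound $|\Phi(v,\lambda;\bm{d})-\Phi(v,\lambda;\bm{d}')|\le M_2(r,|\lambda|\lVert v\rVert_{\mathcal{H}_u})\lVert\bm{d}-\bm{d}'\rVert_{\mathbb{R}^{N_d}}$, the lower bound on $Z$, and the hypothesis $\exp(M_1(r,\cdot))(1+M_2(r,\cdot)^2)\in L^1_{\mu_0}$ then give $I_1\le C_1(r)\lVert\bm{d}-\bm{d}'\rVert^2_{\mathbb{R}^{N_d}}$. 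For $I_2$ I would first bound $|Z-Z'|\le\int_X|e^{-\Phi(\cdot;\bm{d})}-e^{-\Phi(\cdot;\bm{d}')}|\mu_0(dv,d\lambda)\le C_2(r)\lVert\bm{d}-\bm{d}'\rVert_{\mathbb{R}^{N_d}}$ by the same two ingredients, then pass to $|Z^{-1/2}-(Z')^{-1/2}|\le C_3(r)|Z-Z'|$ via the mean value theorem for $t\mapsto t^{-1/2}$ together with the positive lower bounds on $Z,Z'$, and use finiteness of $Z'$ to get $I_2\le C_4(r)\lVert\bm{d}-\bm{d}'\rVert^2_{\mathbb{R}^{N_d}}$. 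Combining yields the claim with $C=C(r)=\sqrt{C_1(r)+C_4(r)}$. The routine parts are the exponential inequalities and the repeated appeals to the $L^1$ bound; the point that needs care — and the reason Assumption \ref{assump1app} is stated with $M_1,M_2$ monotone in each argument — is keeping every constant uniform over data vectors in the ball of radius $r$, so that $M_i(\lVert\bm{d}\rVert_{\mathbb{R}^{N_d}},\cdot)$ may be replaced by $M_i(r,\cdot)$, and securing the positive lower bound on the normalization constants. In the concrete setting of this paper, $\Phi(v,\lambda;\bm{d})=\tfrac12\lVert\bm{d}-\lambda Hv\rVert^2_{\bm{\Gamma}_{\text{noise}}}\ge0$, so one may take $M_1\equiv0$ and, since $\lVert Hv\rVert\le M\lVert v\rVert_{\mathcal{H}_u}$, $M_2(r,s)=\tau(2r+Ms)$; the required integrability is then immediate from the Fernique theorem, so no further verification is needed.
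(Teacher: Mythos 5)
Your argument is correct, but note that the paper itself gives no proof of this statement: it is quoted verbatim in the appendix as Theorem 16 of \cite{dashti2013bayesian} and used as an external ingredient in the proof of Theorem \ref{BayesTheoremNCP}. What you have written is the standard Hellinger well-posedness argument from that reference — rewriting against the common dominating measure $\mu_0$, splitting into the $I_1$ (potential perturbation) and $I_2$ (normalization perturbation) terms, and controlling each via the $M_1$ lower bound, the $M_2$ data-Lipschitz bound, the uniform positive lower bound on $Z,Z'$ over $B_Y(0,r)$, and the integrability hypothesis — so it matches the source's proof essentially step for step. The only point worth tightening is the uniform lower bound on $Z$: boundedness of $\Phi(\cdot\,;0)$ on the bounded set $B$ does not follow from continuity alone in infinite dimensions, so one should (as in the source's proof of Theorem \ref{theorem1app}) use continuity at a single point to produce a small ball of positive prior measure on which $\Phi$ is bounded, and then use the $M_2$ bound to make that estimate uniform in $\bm{d}\in B_Y(0,r)$; your closing remark that in this paper's concrete setting $\Phi\ge 0$ (so $M_1\equiv 0$) and $M_2(r,s)=\tau(2r+Ms)$ with Fernique supplying integrability is accurate.
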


\subsection{Non-centered Gibbs sampling in Subsection 3.1 of the main text}

Here we provide the Non-centered Gibbs sampling method as pseudocode, and readers can seek more details in \cite{chen2018dimension}.

\begin{algorithm}
	\caption{Non-centered pCN within Gibbs}
	\label{alg B}
	\begin{algorithmic}[1]
		\STATE{Fix $\beta \in (0, 1]$, initialize $\lambda_0 = \bar{\lambda}$, $v_0 \sim \mu^v_0$ and set $k=0$ and maximum iteration number $N_{\max}$;}
		\REPEAT
		\STATE{Propose $\hat{v}_k=(1-\beta^2)v_k+\beta \zeta_k, \quad \zeta_k \sim \mathcal{N}(0, \mathcal{C}_0)$;}
		\STATE{Set $v_{k+1} = \hat{v}_k$ with probability
			\begin{align*}
				\min \bigg\lbrace 1, \exp \bigg(\Phi(v_k, \lambda_k) - \Phi(\hat{v}_k, \lambda_k) \bigg)\bigg\rbrace
			\end{align*}
			or else set $v_k = \hat{v}_k$;}
		\STATE{Propose $\hat{\lambda}_k \sim \mathcal{N}(\bar{\lambda}_k, \sigma_k)$, where $\mathcal{N}(\bar{\lambda}_k, \sigma_k)$ is the proposal measure of $\hat{\lambda}_k$;}
		\STATE{Set $\lambda_{k+1} = \hat{\lambda}_k$ with probability
			\begin{align*}
				\min \bigg\lbrace 1, \exp \bigg(\Phi(v_k, \lambda_k) - \Phi(v_k, \hat{\lambda}_k) \bigg)
				\frac{q(\hat{\lambda}_k, \lambda_k)\mu^{\lambda}_0(\hat{\lambda}_k)}{q(\lambda_k, \hat{\lambda}_k)\mu^{\lambda}_0(\lambda_k)} \bigg\rbrace
			\end{align*}
			or else set $\lambda_{k+1}=\lambda_k$;}
		\STATE{Set $k = k+1$;}
		\UNTIL{$k = N_{\max}$.}
		%\STATE{Return $\lbrace (\lambda_k, v_k)\rbrace^{N_{\max}}_0$}
	\end{algorithmic}
\end{algorithm}

Inspired by \cite{agapiou2014analysis}, the proposal measure of $\lambda$ in step 5 of Algorithm $\ref{alg B}$ can be specified based on our settings.
Because of the Gaussian noise and prior measures, the posterior measure of $\lambda$ can be calculated explicitly since it is also a Gaussian measure.
According to the discussion in Subsection 2.5, we provide the posterior measure $\mathcal{N}(\bar{\lambda}_k, \sigma_k)$of $\lambda_k$ as the proposal measure at each step, where
\begin{align*}
	\frac{1}{\sigma_k} = \tau\lVert Hv_k\rVert^{2} + \frac{1}{\sigma}, \quad 
	\bar{\lambda}_k = \sigma_k \bigg (\tau\langle d, Hv_k\rangle + \frac{\bar{\lambda}}{\sigma} \bigg ),
\end{align*}
and $\bar{\lambda}$, $\sigma$ is the mean and variance of the prior measure $\mu^{\lambda}_0$, respectively.

\section*{Acknowledgments}
This work was supported by the NSFC grant 12271428.
and the Major projects of the NSFC grants 12090020, 12090021 and 
the National Key R\&D program of the Ministry of Science and Technology of China grant 2020YFA0713403. 

\bibliographystyle{amsplain}
\bibliography{references}
\end{document}